\documentclass[twoside,12pt,reqno]{amsart}
\usepackage{amsmath,amsthm, amsfonts,amssymb}
\usepackage{color,soul}
\usepackage[dvipsnames]{xcolor}
\usepackage[all]{xy}
\usepackage{graphicx}
\usepackage{indentfirst}
\usepackage{bm}
\usepackage{mathrsfs}
\usepackage{latexsym}
\usepackage{hyperref}
\usepackage{microtype}	
\usepackage{bookmark}
\usepackage{tikz}[2010/10/13]

\newcommand{\graa}[1]{\raisebox{-.6cm}{\includegraphics[height=1.5cm]{PPA#1.pdf}}}
\newcommand{\grb}[1]{\raisebox{-.8cm}{\includegraphics[height=2cm]{PPA#1.pdf}}}
\newcommand{\grc}[1]{\raisebox{-1.3cm}{\includegraphics[height=3cm]{PPA#1.pdf}}}
\newcommand{\grd}[1]{\raisebox{-1.8cm}{\includegraphics[height=4cm]{PPA#1.pdf}}}
\newcommand{\gre}[1]{\raisebox{-2.3cm}{\includegraphics[height=5cm]{PPA#1.pdf}}}

\def\ket#1{\left|#1\right\rangle}
\def\bra#1{\left\langle#1\right|}
\def\braket#1#2{\left\langle#1\right.\left|#2\right\rangle}

%%% begin 151.mac
\theoremstyle{remark}

\newtheorem{Thm}{Theorem}[section]

% NOTE:  The above \newtheorem{Thm}{Theorem}[section]
%will give you Theorem 3.1 .. section plus number

%\theorembodyfont{\itfamily}
%\theoremstyle{theorem}

\newtheorem{proposition}[Thm]{\bf Proposition}
\newtheorem{corollary}[Thm]{\bf Corollary}
\newtheorem{lemma}[Thm]{\bf Lemma}

% define definition environments %
%\theoremstyle{definition}
%\newtheorem{remark}[Thm]{Remark}
\theoremstyle{remark}
\newtheorem{remark}[Thm]{Remark}
\newtheorem{definition}[Thm]{\bf Definition}

\newtheorem{theorem}[Thm]{\bf Theorem}
\newtheorem{notation}[Thm]{Notation~}

\def\bv#1#2{\begin{pmatrix}  {#1}\\{#2}  \end{pmatrix}}

%cals
\textwidth=6.5in \hoffset -.5in

\def\calt{{\mathcal T}}

%fraks

%bbb's

\def\Z{{ \mathbb  Z}}

\def\pslash{\hbox{$\partial$\kern-1.2ex \raise.14ex\hbox{/}\kern.5ex}}
\def\Epslash{\hbox{{\rm {\pslash\kern-.6ex \lower.28ex \hbox{$_E$}}}}}
\def\Epslashi{\hbox{{\rm {\pslash\kern-.6ex \lower.28ex \hbox{$_{E,i}$}}}}}
\def\Epslashii#1{\hbox{{\rm {\pslash\kern-.6ex \lower.28ex \hbox{$_{E,#1}$}}}}}
\def\Epslasht{\hbox{{\rm {\pslash\kern-.6ex \lower.28ex \hbox{$_{E,\calt}$}}}}}
\def\Epslashtstar{\hbox{{\rm {\pslash\kern-.6ex \lower.28ex \hbox{$_{E,\calt^*}$}}}}}
\def\Epslashti{\hbox{{\rm {\pslash\kern-.6ex \lower.28ex \hbox{$_{E,\calt_i}$}}}}}

\def\l{\left}
\def\r{\right}
\def\la{\lambda}

\def\be{\begin{equation}}
\def\ee{\end{equation}}
\def\beq{\begin{eqnarray}}
\def\eeq{\end{eqnarray}}
\def\beqs{\begin{eqnarray*}}
\def\eeqs{\end{eqnarray*}}

\def\bep{\begin{proof}}
\def\eep{\end{proof}}
%\def\nn{\nonumber \\ }

%%%roman words

%
%chi stuff

%varphi stuff

%%%%%%%%%%%%%%
\setlength{\evensidemargin}{2.5pc}
\setlength{\oddsidemargin}{2.5pc}
\setlength{\topmargin}{1pc}\tolerance=6000
\parskip=0pt
\parindent=18pt
\baselineskip=13pt
\abovedisplayskip=12pt plus3pt minus2pt
\belowdisplayskip=12pt plus3pt minus2pt
\abovedisplayshortskip=12pt plus3pt minus2pt
\belowdisplayshortskip=12pt plus3pt minus2pt

\def\lra#1{\left\langle #1\right\rangle}

\def\abs#1{{\l\vert #1 \r\vert}}

\newif\ifMarginNotes \MarginNotestrue
\def\mrgn#1{\ifMarginNotes\setbox0=\vtop{\hsize 6.75pc
   {\noindent\relax #1\par}}\leavevmode
   \vadjust{\dimen0=\dp0 \dimen1=\ht0\advance\dimen1 by .5ex
 \advance\dimen0 by -.5ex
  \kern-\dimen1\hbox{\kern\hsize\kern.5pc$\leftarrow$
  \box0}\kern-\dimen0}\fi}
\catcode`\@=11

\font\ninemsb=msbm10 scaled 900
\font\eightmsb=msbm10 scaled 800

\font\tenmsb=msbm10 scaled 950
\font\ninemsb=msbm7 scaled 1200
\newfam\msbfam
\textfont\msbfam=\tenmsb  \scriptfont\msbfam=\ninemsb
  \scriptscriptfont\msbfam=\eightmsb
\def\msb@{\hexnumber@\msbfam}

\font\teneufm=eufm10
\font\nineeufm=eufm7 scaled 1200
\font\seveneufm=eufm7

\newfam\eufmfam
\textfont\eufmfam=\teneufm  \scriptfont\eufmfam=\nineeufm
  \scriptscriptfont\eufmfam=\seveneufm
\def\frak{\relax\ifmmode\let\next\frak@\else
 \def\next{\errmessage{Use \string\frak\space only in math mode}}\fi\next}
\def\frak@#1{{\frak@@{#1}}}
\def\frak@@#1{\fam\eufmfam#1}

\catcode`\@=12

\usepackage{fancyhdr}

\makeatletter
\def\l@section{\@tocline{1}{0pt}{1pc}{}{}}
\def\l@subsection{\@tocline{2}{0pt}{1pc}{4.6em}{}}
\def\l@subsubsection{\@tocline{3}{0pt}{1pc}{7.6em}{}}
\renewcommand{\tocsection}[3]{%
  \indentlabel{\@ifnotempty{#2}{\makebox[2.3em][l]{%
    \ignorespaces#1 #2.\hfill}}}#3}
\renewcommand{\tocsubsection}[3]{%
  \indentlabel{\@ifnotempty{#2}{\hspace*{2.3em}\makebox[2.3em][l]{%
    \ignorespaces#1 #2.\hfill}}}#3}
\renewcommand{\tocsubsubsection}[3]{%
  \indentlabel{\@ifnotempty{#2}{\hspace*{4.6em}\makebox[3em][l]{%
    \ignorespaces#1 #2.\hfill}}}#3}
\makeatother

\usepackage{datetime}
\usdate%\longdate
%\usepackage[yyyymmdd,hhmmss]{datetime}
%\rfoot{Compiled on \today\ at \currenttime}
%\cfoot{}
%\lfoot{Page \thepage}
%
\def\:{{:}}

\renewcommand{\theequation}{\Roman{section}.\arabic{equation}}

%%%%%%%%%%
%                                    Address
%%%%%%%%%%
\makeatletter
\renewcommand{\author}[2][]{%
  \def\@tempa{#1}
  \ifx\@empty\authors
    \ifx\@tempa\@empty
      \gdef\shortauthors{#2}%
    \else
      \gdef\shortauthors{#1}%
    \fi
    \gdef\authors{\author{#2}}%
  \else
    \ifx\@tempa\@empty
      \g@addto@macro\shortauthors{\and#2}%
    \else
      \g@addto@macro\shortauthors{\and#1}%
    \fi
    \g@addto@macro\authors{\and\author{#2}}%
  \fi
}
\renewcommand{\address}[2][]{\g@addto@macro\authors{\address{#1}{#2}}}
\def\@setauthors{%
  \begin{center}%
    \footnotesize
    \vspace{20pt}
    \let\and\@empty
    \def\author##1{\advance\@tempcnta\@ne}%
    \def\address##1##2{\advance\@tempcntb\@ne}%
    \@tempcnta=\z@  \@tempcntb=\z@
    \authors
    \ifnum\@tempcnta>\@ne \ifnum\@tempcntb=\@ne
        \oneaddress
      \else
        \sepaddresses
      \fi
    \else
      \oneaddress
    \fi
  \end{center}%
}
\def\oneaddress{%
  \begingroup
  \let\author\@iden \let\address\@gobbletwo
  \renewcommand{\andify}{%
    \nxandlist{\unskip, }{\unskip{} and~}{\unskip, and~}}%
  \uppercasenonmath\authors
  \andify\authors
  \authors
  \endgroup
  \begingroup \let\and\relax \let\author\@gobble
  \def\address##1##2{\unskip\\[10pt] \itshape##2}%
  \authors
  \endgroup
}
\def\sepaddresses{%
  \begingroup
    \baselineskip10\p@\relax
    \def\address##1##2{ ({\itshape##2}\/)}
    \def\author##1{\def\temp{##1}\leavevmode\uppercasenonmath\temp\temp}%
    \nxandlist
      {,\\[\baselineskip]}
      {\\[\baselineskip] \textsc{\lowercase{and}}\\[\baselineskip]}
      {,\\[\baselineskip]\textsc{\lowercase{and}}\\[\baselineskip]}
      \authors % macro to operate on
    \authors
  \endgroup
}
\def\maketitle{\par
  \@topnum\z@
  \@setcopyright
  \thispagestyle{firstpage}%
  \uppercasenonmath\shorttitle
  \ifx\@empty\shortauthors \let\shortauthors\shorttitle
  \else
    \newcommand{\@xuppercasenonmath}[1]{\toks@\@emptytoks
      \@xp\@skipmath\@xp\@empty##1$$%
      \edef##1{\@nx\protect\@nx\@upprep\the\toks@}}%
    \@xuppercasenonmath\shortauthors
    \def\@@and{AND}
    \renewcommand{\andify}{%
      \nxandlist{\unskip, }{\unskip{ }\@@and{ }}{\unskip, \@@and{ }}}%
    \andify\shortauthors
  \fi
  \@maketitle@hook
  \begingroup
  \@maketitle
  \endgroup
  \c@footnote\z@
  \@cleartopmattertags
}
\def\@maketitle{%
  \normalfont\normalsize
  \let\@makefntext\noindent
  \@adminfootnotes
  \ifx\@empty\addresses\else \@footnotetext{\@setotheraddresses}\fi
  \global\topskip68\p@\relax
  \@settitle
  \ifx\@empty\authors \else \@setauthors \fi
  \ifx\@empty\@dedicatory
  \else
    \baselineskip26\p@
    \vtop{\centering{\footnotesize\itshape\@dedicatory\@@par}%
      \global\dimen@i\prevdepth}\prevdepth\dimen@i
  \fi
  \toks@\@xp{\shortauthors}\@temptokena\@xp{\shorttitle}%
  \edef\@tempa{\@nx\markboth{\the\toks@}{\the\@temptokena}}\@tempa
  \@setabstract
  \normalsize
  \if@titlepage
    \newpage
  \else
    \dimen@34\p@ \advance\dimen@-\baselineskip
    \vskip\dimen@\relax
  \fi
} % end \@maketitle
\renewcommand{\thanks}[1]{%
  \ifx\@empty\thankses
    \gdef\thankses{\thanks{#1}}%
  \else
    \g@addto@macro\thankses{\endgraf\thanks{#1}}%
  \fi}
\def\@setthanks{\def\thanks##1{\noindent##1\@addpunct.}\thankses}
\renewcommand{\curraddr}[2][]{%
  \ifx\@empty\addresses
    \gdef\addresses{\curraddr{#1}{#2}}%
  \else
    \g@addto@macro\addresses{\endgraf\curraddr{#1}{#2}}%
  \fi}
\renewcommand{\email}[2][]{%
  \ifx\@empty\addresses
    \gdef\addresses{\email{#1}{#2}}%
  \else
    \g@addto@macro\addresses{\endgraf\email{#1}{#2}}%
  \fi}
\renewcommand{\urladdr}[2][]{%
  \ifx\@empty\addresses
    \gdef\addresses{\urladdr{#1}{#2}}%
  \else
    \g@addto@macro\addresses{\endgraf\urladdr{#1}{#2}}%
  \fi}
\def\@setotheraddresses{%
  \def\curraddr##1##2{\noindent
    \emph{Current address\@ifnotempty{##1}{ of ##1}}:\space
      ##2\@addpunct.}%
  \def\email##1##2{\noindent
    \emph{E-mail address\@ifnotempty{##1}{ of ##1}}:\space
      \texttt{##2}}%
  \def\urladdr##1##2{\noindent
    \emph{WWW address\@ifnotempty{##1}{ of ##1}}:\space
      \texttt{##2}}%
  \addresses
}
\let\enddoc@text\relax
\makeatother
%%%%%%%%%%%%%%
%                   Address
%%%%%%%%%%%%%%

\newcommand{\mn}{/3}
\newcommand{\nn}{/3}

\newcommand{\fbraid}[4]{
\draw (#1,#2)--(#3,#4);
\draw (#1,#4)--(2/3*#1+1/3*#3,2/3*#4+1/3*#2);
\draw (#3,#2)--(2/3*#3+1/3*#1,2/3*#2+1/3*#4);
}

\newcommand{\fmeasure}[5]{
\node at (#1+#3,#2-#4) {\size{$#5$}};
\draw (#1,#2) --(#1,#2-#4)  arc (-180:0:#3) -- (#1+#3+#3,#2);
}

\newcommand{\fdoublemeasure}[6]
{
\fmeasure{#1}{#2}{3*#3}{#4}{}
\fmeasure{#1+2*#3}{#2}{#3}{#4}{#5}
\node at (#1+3*#3, #2-#4-2*#3) {\size{$#6$}};
}

\newcommand{\fqudit}[5]{
\node at (#1+#3,#2+#4) {\size{$#5$}};
\draw (#1,#2) --(#1,#2+#4)  arc (180:0:#3) -- (#1+#3+#3,#2);
}

\newcommand{\fdoublequdit}[6]
{
\fqudit{#1}{#2}{3*#3}{#4}{}
\fqudit{#1+2*#3}{#2}{#3}{#4}{#5}
\node at (#1+3*#3, #2+#4+2*#3) {\size{$#6$}};
}

\newcommand{\size}[1]{\fontsize{10pt}{\baselineskip}\selectfont{#1}}

\newcommand{\FS}{\mathfrak{F}_{\text{{s}}}}

\title{Planar Para Algebras, Reflection Positivity}
\author{Arthur Jaffe and Zhengwei Liu}
\thispagestyle{empty}
\address{Harvard University, Cambridge, MA 02138}

\begin{document}
\maketitle

\begin{abstract}
We define a \textit{planar para algebra}, which arises naturally from combining planar algebras with the idea of $\Z_{N}$ para symmetry in physics.
A subfactor planar para algebra is a Hilbert space representation of planar tangles with parafermionic defects, that are invariant under para isotopy.  For each $\Z_{N}$, we construct a family of subfactor planar para algebras which play the role of Temperley-Lieb-Jones planar algebras. The first example in this family is the parafermion planar para algebra (PAPPA).
Based on this example, we introduce parafermion Pauli matrices,  quaternion relations, and braided relations for parafermion algebras which one can use in the study of  quantum information.
An important ingredient in planar para algebra theory is the string Fourier transform (SFT), that  we use on the matrix algebra generated by the Pauli matrices.
Two different reflections play an important role in the theory of planar para algebras. One is the adjoint operator; the other is the modular conjugation in Tomita-Takesaki theory. We use the latter one to define the double algebra and to introduce reflection positivity. We give a new and geometric proof of reflection positivity, by relating the two reflections through the string Fourier transform.
\end{abstract}

\tableofcontents

\section{Introduction}
We introduce the notion of a {\em planar para algebra} (PPA), which generalizes the concept of a planar algebra introduced by Jones \cite{JonPA}.  The idea arises naturally from considering a grading with planar algebras.  In physics a $\Z_{2}$ grading (charge) arises from fermions, and a $\Z_{N}$ grading from parafermions, a generalization of fermions.  One might think of planar para algebras as a topological quantum field theory with parafermionic defects \cite{Ati88,Wit88}.

PPA is a natural algebraic object to study.  But the applications of PPA  enhance their value.  As they relate naturally to the physics of parafermions, it is natural to expect that PPA have application in physics.  We encounter some of these relations in Pauli $X,Y,Z$ matrices and in the property of reflection positivity.   But shortly after writing the original version of this paper, we were surprised to find that PPA are also relevant in the field of quantum information.  So we have rewritten this paper and we elaborate on this interesting and  newly-discovered connection   that we treat in detail in other work, see \S\ref{ApplicationsOutside}.

\subsection{Fundamental Properties of PPA}
The partition function of a planar para algebra is a representation of planar tangles with parafermionic defects on a vector space invariant under isotopy. Usually we require that those tangles without boundary are presented by a scalar multiple of the vacuum vector, defined by the empty diagram.
When the partition function has the standard positivity property in planar algebra theory with respect to the vertical reflection, we call the planar para algebra a \textit{subfactor} planar para algebra; those planar para algebras are closely related to subfactor theory.

The fundamental planar algebra is known as the  Temperley-Lieb-Jones planar algebra. The positivity condition for the Temperley-Lieb-Jones planar algebra was proved by Jones' remarkable result on the rigidity of indices \cite{Jon83}.
In Theorem \ref{Theorem: construction}, we show that for each group $\mathbb{Z}_N$ one can construct a planar para algebra which plays the role of the Temperley-Lieb-Jones planar algebra in the theory of planar para algebras. We prove a similar rigidity result for the positivity condition in Theorem \ref{Theorem: positivity general} and thereby obtain a family of subfactor planar para algebras.

For each $\mathbb{Z}_N$, the subfactor planar algebra in the family that has the smallest index is called a parafermion planar para algebra (PAPPA), since it is algebraically isomorphic to the parafermion algebra in physics, with infinitely many generators. We explore other properties of PAPPAs in Sections \ref{Pauli}, \ref{sec:PauliDiagrams}, \ref{Pictorial presentation}, and \ref{braided relations}.  In the first two of these sections we discuss local properties of the algebras.  In the later sections we discuss global properties.

There are two different natural states on a PAPPA.   One is the Markov trace, usually used in subfactor theory.
We can realize the underlying Hilbert spaces by the Gelfand-Naimark-Segal construction, and this gives a braid-group representation and well-known knot invariants.

The other natural state is the expectation in the zero-particle vector state, arising from a standard Fock-space construction, see for example \cite{CO89}.
The corresponding GNS representation is different, and this state is especially suitable for applications in quantum information, see \S\ref{ApplicationsOutside}.
The PAPPA not only gives a pictorial representation of a parafermion algebra, but it also gives a picture of the underlying Hilbert space.

Furthermore, in \S\ref{braided relations} we extend the isotopy to the three-dimensional space by introducing braids. We prove that parafermion planar para algebras are {\it half-braided}. The diagrammatic representation of the underlying Hilbert space is compatible with the braided isotopy.

%
%{\color{red}Move these bullets:}
%\begin{itemize}
%\item{} A major difference between PAPPA and the para-fermion algebra in physics is the existence of the string Fourier transform (SFT), defined as a partial rotation of diagrams.  This transform agrees in a special case with the usual Fourier series on $\Z_{N}$, as shown in Proposition \ref{Prop:SFT=FT}.
%
%\item{}
%We apply our diagrammatic picture to give a geometric interpretation to the twisted product, introduced in \cite{JafJan2015,JafJan2016}, as an intermediate state in ``para-isotopy,'' that is also invariant under a horizontal reflection homomorphism $\Theta$.
%
%\end{itemize}
%

\subsection{Pauli Matrices $X,Y,Z$}\label{Sect:PauliIntro}
These unitary matrices are important in physics. With Fourier transform and the Gaussian, they generate an interesting projective, linear group $\Z_{N}^{2}\times SL(2,\Z_{N})$.
In \S\ref{sec:PauliDiagrams} we give a variety of diagrammatic representations of the ``parafermion Pauli matrices'' $X,Y,Z$ of dimension $N$, which are given algebraically  in \S\ref{Pauli}.\footnote{Our diagrammatic interpretation illustrates that certain models of $X,Y,Z$ that are quadratic in parafermions have a special  advantage: they are neutral (charge-zero), so they preserve charge.}

\subsection{The String Fourier Transform}
The action of rotation on various defects are described by the para degree.  For usual planar algebras, a $2\pi$ rotation equals the identity.   In the case of planar fermion algebras, the $2\pi$ rotation on a fermion has the eigenvalue $-1$. In the general case of planar para algebras, the $2\pi$ rotation of a $\mathbb{Z}_N$ parafermion has the eigenvalue $e^{\frac{2\pi i}{N}}$.

In terms of planar para algebras, we define the string Fourier transform (SFT) $\mathfrak{F}_{\text{s}}$ on parafermion algebras as a (one-string) rotation of the diagrams.  Acting on two-box diagrams, the SFT amounts to a rotation by $\frac{\pi}{2}$. It turns an ordinary product of diagrams into a convolution product of diagrams.

\begin{itemize}
\item{} We give an elementary proof in  Proposition~\ref{Prop:SFT=FT}, that in the case of zero-graded two-string diagrams, the SFT reduces to the usual Fourier transform on $\Z_{N}$.
\end{itemize}

\subsection{Reflection Positivity}
The graded commutant of the parafermion algebra on the GNS representation can be represented pictorially in the parafermion planar algebra. The modular conjugation $\Theta$ in Tomita-Takisaki theory turns out to be a horizontal reflection.
In \S\ref{sect:RP} we study reflection-doubled algebras, leading to the study of the \textit{reflection-positivity} property \cite{OstSch73a,OstSch73b}. This property is quite important in the context of particle physics and statistical physics, where it has wide use in establishing existence results in quantum field theory, as well as in the study of phase transitions.  Reflection positivity of parafermion algebras had been proved in a different context \cite{JafPed,JafJan2015,JafJan2016}, where one finds further references to other papers on reflection positivity.  However here we apply our diagrammatic picture to give a geometric interpretation to the twisted product used in these proofs, as an intermediate state in ``para-isotopy.'' This product is also invariant under a horizontal reflection homomorphism $\Theta$.

In Theorem \ref{RPgeneral} we give a new and geometric proof of reflection positivity that applies to parafermion algebras as a special case, and in general to subfactor planar para algebras. In particular, we relate the two notions of positivity mentioned above: $C^{*}$ positivity and reflection positivity.  We show that reflection positivity of a Hamiltonian in a subfactor planar para algebra is a consequence of the $C^*$ positivity of the string Fourier transform of the Hamiltonian.

The underlying mechanism that leads to reflection positivity relies on the relation between two different reflections, one is the rotation of the other.  In the planar para algebra, a horizontal reflection $\Theta$ defines the double.  On the other hand  a vertical reflection defines the adjoint~$^{*}$.  These two reflections are related by a $\frac{\pi}{2}$ rotation, which is how the string Fourier transform enters.  We combine rotation and reflection with the isotopy invariance of the partition function, in order to obtain the reflection positivity property.
For parafermion algebras, we show in Theorem \ref{RPparafermion} that reflection positivity is equivalent to the positivity of the coupling constant matrix $J^{0}$ of the Hamiltonian for interaction across the reflection plane.
%
%\begin{itemize}
%\item{}
%In \S\ref{sect:RP}  we use para-isotopy to relate properties of the SFT with the action of the  reflection $\Theta$.  The reflection property (RP) property plays a central role in mathematical physics, especially in quantum field theory and in the study of phase transitions in statistical physics.
%\end{itemize}
%
%
%\begin{itemize}
%\item{}
%In \S\ref{sect:RP}  we use para-isotopy to relate properties of the SFT with the action of the  reflection $\Theta$.  The reflection property (RP) property plays a central role in mathematical physics, especially in quantum field theory and in the study of phase transitions in statistical physics.
%
%
%
%\item{}
%In Theorem \ref{RPparafermion} we give a geometric proof of the reflection-positivity property for PAPPA's that satisfy $C^{*}$ positivity.  This is the first time that the two positivity conditions ($C^{*}$ positivity in operator algebras and reflection positivity for interacting systems in statistical physics)  have been shown to be directly related to one-another.
%\end{itemize}
%

\subsection{Quantum Information\label{ApplicationsOutside}}
%We concentrate much of our paper on one single example of PPA, namely PAPPA. 
We mention briefly the surprising connections  that we discovered linking PAPPA with quantum information. We have  explored this relation in detail in related works.  The overlap with quantum information is based on the use of diagrams arising in PAPPA.  We call this work  \textit{holographic software}, since we present a dictionary to translate between quantum information protocols and PAPPA diagrams \cite{JafLiuWoz}. In fact central concepts in PAPPA map onto central concepts in protocols for communication; this includes the diagrammatic representation of the resource state in addition to diagrammatic protocols.

\begin{itemize}
\item{}
The resource state is central in quantum information as a means to enable entanglement between different parties. We find that the SFT yields a maximally-entangled resource state by  acting on the zero-particle state of a quantum system \cite{JafLiuWoz}. This state has maximal entanglement entropy.

\item{}
Previously, entanglement had been thought to be engendered by the topological properties of the braid.  As a consequence of the relationship between quantum information and PAPPA, we believe that it is more natural to think of the resource state as arising from the SFT.

\item{}
Our diagram for the teleportation protocol  in quantum information is extremely intuitive,  conveying the idea of transporting the qudit $\phi_{A}$, see Figure \ref{Pic:Teleportation1-Intro}.
\begin{figure}[h]
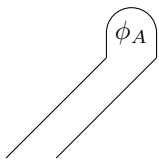

\raisebox{-1.5cm}{
\tikz{
\fqudit{0}{2\nn}{1/3}{1\nn}{\phi_A}
\draw (-2/-3,2\nn)--(2/-3,-2\nn);
\draw (0/-3,2\nn)--(4/-3,-2\nn);
}}\caption{Holographic protocol for teleportation of qudit $\phi_{A}$.} \label{Pic:Teleportation1-Intro}
\end{figure}
Motivated by insights arising from PAPPA, we introduced compressed transformations in quantum  information~\cite{JafLiuWoz-Tele}. We also found a new, more efficient, and more general protocol for teleportation that allows for multiple parties, each with multiple persons~\cite{JafLiuWoz-Tele}.  We illustrate this example in Figure~\ref{CTX-Protocol-Pic}. In the special case of two persons, this protocol covers many known protocols.

\begin{figure}[!htb]
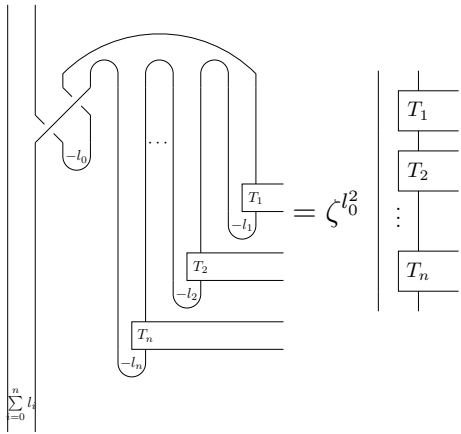

\begin{align*}
\scalebox{0.55}{\raisebox{-5.1 cm}{\tikz{
\fbraid{6\mn}{0}{8\mn}{2\nn}
\fbraid{10\mn}{4\nn}{8\mn}{2\nn}
%%%%
\draw (8\mn,4\nn) -- (8\mn,5\nn) to [bend left=45] (22\mn,5\nn)--(22\mn,4\nn);
\fqudit{18\mn}{4\nn}{1\mn}{1\nn}{}{}
\fqudit{14\mn}{4\nn}{1\mn}{1\nn}{}{}
\fqudit{10\mn}{4\nn}{1\mn}{1\nn}{}{}
\fmeasure{8\mn}{0\nn}{1\mn}{1\nn}{}
\node at (9\mn,-1\nn) {\size{$-l_0$}};
\draw (10\mn,0\nn) -- (10\mn,2\nn);
\node at (15\mn,0\nn) {$\cdots$};
%%%%
\draw (22\mn,-5\nn+2\nn) -- (22\mn,4\nn);
\draw (20\mn,-7\nn+2\nn) -- (20\mn,4\nn);
\draw (24\mn,-5\nn+2\nn)--(21\mn,-5\nn+2\nn) -- (21\mn,-7\nn+2\nn) --(24\mn,-7\nn+2\nn);
\node at (22\mn,-6\nn+2\nn) {\size{$T_1$}};
\fmeasure{20\mn}{-7\nn+2\nn}{1\mn}{1\nn}{-l_1}
%%%%
\draw (-4\mn+22\mn,-5\nn-3\nn) -- (-4\mn+22\mn,4\nn);
\draw (-4\mn+20\mn,-7\nn-3\nn) -- (-4\mn+20\mn,4\nn);
\draw (24\mn,-5\nn-3\nn)--(-4\mn+21\mn,-5\nn-3\nn) -- (-4\mn+21\mn,-7\nn-3\nn) --(24\mn,-7\nn-3\nn);
\node at (-4\mn+22\mn,-6\nn-3\nn) {\size{$T_2$}};
\fmeasure{-4\mn+20\mn}{-7\nn-3\nn}{1\mn}{1\nn}{-l_2}
%%%%
\draw (-8\mn+22\mn,-5\nn-8\nn) -- (-8\mn+22\mn,4\nn);
\draw (-8\mn+20\mn,-7\nn-8\nn) -- (-8\mn+20\mn,4\nn);
\draw (24\mn,-5\nn-8\nn)--(-8\mn+21\mn,-5\nn-8\nn) -- (-8\mn+21\mn,-7\nn-8\nn) --(24\mn,-7\nn-8\nn);
\node at (-8\mn+22\mn,-6\nn-8\nn) {\size{$T_n$}};
\fmeasure{-8\mn+20\mn}{-7\nn-8\nn}{1\mn}{1\nn}{-l_n}
%%%%
\node at (5\mn,-19\nn) {\size{$\sum\limits_{i=0}^n l_i$}};
\draw (4\mn,10\nn) -- (4\mn,-21\nn);
\draw (6\mn,0) -- (6\mn,-21\nn);
\draw (6\mn,10\nn) -- (6\mn,2\nn);
}}}
&=\zeta^{l_0^2}\;
\scalebox{0.8}{\raisebox{-1.5 cm}{
\tikz{
\draw (22\mn,-5\nn) -- (22\mn,-4\nn);
\draw (20\mn,-7\nn) -- (20\mn,-4\nn);
\draw (24\mn,-5\nn)--(21\mn,-5\nn) -- (21\mn,-7\nn) --(24\mn,-7\nn);
\node at (22\mn,-6\nn) {\size{$T_1$}};
\draw (22\mn,-5\nn-3\nn) -- (22\mn,-4\nn-3\nn);
\draw (20\mn,-7\nn-3\nn) -- (20\mn,-4\nn-3\nn);
\draw (24\mn,-5\nn-3\nn)--(21\mn,-5\nn-3\nn) -- (21\mn,-7\nn-3\nn) --(24\mn,-7\nn-3\nn);
\node at (22\mn,-6\nn-3\nn) {\size{$T_2$}};
\node at (21\mn,-11\nn) {\size{$\vdots$}};
\draw (22\mn,-5\nn-8\nn) -- (22\mn,-4\nn-6\nn);
\draw (20\mn,-7\nn-8\nn) -- (20\mn,-4\nn-6\nn);
\draw (22\mn,-5\nn-11\nn) -- (22\mn,-4\nn-11\nn);
\draw (20\mn,-5\nn-11\nn) -- (20\mn,-4\nn-11\nn);
\draw (24\mn,-5\nn-8\nn)--(21\mn,-5\nn-8\nn) -- (21\mn,-7\nn-8\nn) --(24\mn,-7\nn-8\nn);
\node at (22\mn,-6\nn-8\nn) {\size{$T_n$}};
}}}
\end{align*}
%\begin{figure}[h]
\caption{ CT Protocol for teleportation of compressed transformations.} \label{CTX-Protocol-Pic}
\end{figure}
\end{itemize}
\goodbreak

\setcounter{equation}{0}
\section{Planar Para Algebras}
\subsection{Planar tangles}
Our definition of planar para algebras involves planar tangles. These tangles are similar to the planar tangles in Jones' original definition of planar algebras \cite{Jon12}.  However, for readers who are not familiar with planar algebras, we give the definitions here, indicating some main distinctive features in color or boldface\footnote{A main difference between planar and planar para algebras is that we mark a distinguished point on the boundary of each disc, within a distinguished interval. This change is necessary, in order to describe the precise height of Jones' symbol $\$$. This height is significant in the definition of our twisted tensor product.}.

A planar $k$-tangle $T$ will consist of a smooth closed output disc $D_0$ in $\mathbb{C}$ together with a finite (possibly empty)  set $\mathcal{D}=\mathcal{D}_T$ of disjoint smooth input discs in the interior of $D_0$. Each input disc $D\in\mathcal{D}$ and the output disc $D_0$, will have an even number $2k_D\geq 0$ of marked points on its boundary with $k=k_{D_0}$. Inside $D_0$, but outside the interiors of the $D\in\mathcal{D}$, there is also a finite set of disjoint smoothly embedded curves called strings, which are either closed curves, or the end points of the strings are different marked points of $D_0$ or of the $D$'s in $\mathcal{D}$. Each marked point is the end-point of some string, which meets the boundary of the corresponding disc transversally.

The connected components of the complement of the strings in $\overset{\circ}{D_0}\backslash\bigcup_{D\in\mathcal{D}}D$ are called regions. The connected component of the boundary of a disc, minus its marked points, will be called the intervals of that disc. Regions of the tangle are shaded (say in gray), or they are unshaded (say in white).   Shading is done in a way that regions whose boundaries meet have different shading.  Intervals have a unique shading, as only one side of any interval lies in a region. The shading will be considered to extend to the intervals which are part of the boundary of a region.

To each disc in a tangle there is a distinguished ${\boldsymbol{point}}$ on its boundary that is not an end point of a string. The distinguished point is marked by a dollar sign $\$$, placed to the left of the input disc, or to the right of the output disc. This distinguished point defines a distinguished interval for each disc.

We denote the set of all planar $k$-tangles for $k\geq0$ by $\mathcal{T}_k$, and let $\mathcal{T}=\cup_k\mathcal{T}_k$. If the distinguished interval of $D_0$ for $T\in\mathcal{T}$ is unshaded, $T$ will be called positive; if it is shaded, $\mathcal{T}$ will be called negative. Thus $\mathcal{T}_k$ is the disjoint union of sets of positive and negative planar para tangles: $\mathcal{T}_k=\mathcal{T}_{k,+}\cup\mathcal{T}_{k,-}$.
\begin{figure}[h]
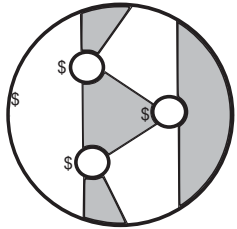

$$\grd{RT}$$
\caption{A regular planar 3-tangle}
\end{figure}

\setcounter{Thm}{0}
\begin{definition}
A planar tangle will be called \textit{regular} if the distinguished point of each disc is on the left, and the distinguished points of the input discs are ordered vertically.  Let RT denote the set of \textit{regular planar tangles}.
\end{definition}
This means that the $x$ coordinate of each disc is the smallest one among all points on the boundary of the disc, and the $y$ coordinates of the input discs are pairwise different. Let $y(D)$ denote the $y$ coordinate of the distinguished point of an input disc $D$.

In certain situations one can compose two tangles $T$ and $S$ to obtain a tangle $T\circ_D S\in$ RT. To make this possible, the output disc of $S\in$ RT  must be identical to one input disc $D$ of a  $T\in$  RT. Furthermore $D$ must be lower than all the $\$$'s above the $\$$ of $D$, and it must be higher than the $\$$'s under the $\$$ of $D$.  This makes it possible to find a diffeomorphism of the plane that moves each disc in $T$, other than $D$, to be completely higher or lower than $D$.
Using this representative of the planar tangle $D$, one can then define the composition $T\circ_D S$ in the usual way: match the intervals and points of $D$ in $S$ with those of $T$.  Also replace any closed, contractible string formed in this composition by a scalar $\delta$, which we denote as  the \textit{circle parameter}.

\subsection{Planar para algebras}\label{section: planar para algebra}
Let $G$ be a finite abelian group and $\chi$ be a bicharacter of $G$.
\begin{definition}
A (shaded) $(G,\chi)$ planar \textcolor{blue}{para} algebra $\mathscr{P}_{\bullet}$ will be a family of $\mathbb{Z}/2\mathbb{Z}$-graded vector spaces indexed by the set $\mathbb{N}\cup\{0\}$, having the following properties:
	\begin{itemize}
	\item{} Let $\mathscr{P}_{n,\pm}$ denote the $\pm$ graded space indexed by $n$.
	\item{} To each regular planar $n$-tangle $T$ for $n\geq 0$ and $\mathcal{D}_T$ non-empty input discs, there will be a multilinear map
	\be
	Z_T:\times_{i\in\mathcal{D}_T}\mathscr{P}_{D_{i}}\to \mathscr{P}_{D_0}\;,
	\ee
where $\mathscr{P}_D$ is the vector space indexed by half the number of marked boundary points of $i$.
	\item{} The $\mathbb{Z}_{2}$ grading of each $\mathscr{P}_{i}$ is taken to be $+$ if the distinguished interval of $D_{i}$ is unshaded, or $-$ if it is shaded,  and similarly for $\mathscr{P}_{D_{0}}$.
\end{itemize}
\end{definition}

\begin{definition}
The map $Z_T$ is called the ``partition function'' of $T$ and is subject to the following \textcolor{blue}{five} requirements:
\end{definition}

\begin{itemize}
\item[(i)](\textcolor{blue}{RT} \textcolor{blue}{ isotopy invariance}) If $\varphi$ is a continuous map from $[0,1]$ to orientation preserving diffeomorphisms of $\mathbb{C}$, such that $\varphi_0$ is the identity map and $\varphi_t(T) \in$ RT,      then
	\[
		\mathbb{Z}_T= Z_{\varphi_1(T)}\;,
	\]
where the sets of internal discs of $T$ and $\varphi_t(T)$ are identified using $\varphi_t$, for $t\in[0,1]$.

\item[(ii)](Naturality) If $T\circ_D S$ exists and $\mathcal{D}_S$ is non-empty
$$Z_{T\circ_D S}=Z_T\circ_D Z_S$$
where $D$ is an internal disc in $T$.

\item[(iii)](\textcolor{blue}{Grading})
Each vector space $\mathscr{P}_{n,\pm}$ is $G$ graded,
	\[
	\mathscr{P}_{n,\pm}=\oplus_{g\in G} \mathscr{P}_{n,\pm,g}\;,
	\quad\text{and}\quad
	Z_T:\otimes_{i\in \mathcal{D}_T} \mathscr{P}_{i,g_i} \to \mathscr{P}_{D_0, \sum_ig_i}\;.
	\]

\item[(iv)](\textcolor{blue}{Para isotopy})
Take $\mathscr{P}_g=\oplus_{n,\pm} \mathscr{P}_{n,\pm,g} $ for $g\in G$.  We have
$$\grb{paraisotopy1}=\chi(g,h) \grb{paraisotopy2}\;,$$
for any $x\in \mathscr{P}_g$ and $y \in \mathscr{P}_h$
\item[(v)] (\textcolor{blue}{Rotation}) The clockwise $2\pi$ rotation of any $g$ graded vector $x$ is $\chi(g,g)x$, i.e.,
 $$\grb{2pirotation}=\chi(g,g)x.$$
\end{itemize}

\begin{remark}
``Planar algebras" satisfying conditions (i) and (ii) have their own interests.  Conditions (iii), (iv) and (v) are motivated by the discussion of parafermion algebras in \cite{JafPed,JafJan2015,JafJan2016}.
\end{remark}

\begin{remark}
One can remove the condition that the $\$$ is on the left, and introduce the rotation isotopy for arbitrary angle, not only $2\pi$.  However, this makes the definition and computation more complicated. For convenience, we choose a representative of planar tangles in the isotopy class by fixing the $\$$ sign on the left.
\end{remark}

\begin{remark}
When $\chi$ is the constant $1$, the planar para algebra is a planar algebra.
The zero graded planar para subalgebra is a planar algebra.
\end{remark}

\begin{definition}
A vector $x$ in $\mathscr{P}_g$ is called homogenous. The grading of $x$ is defined to be $g$, denoted by $|x|_{G}$, or $|x|$,  if it causes no confusion.
\end{definition}

\begin{notation}
Furthermore in case it cannot cause confusion, we omit the output disc and the $\$$ signs.
A vector in $\mathscr{P}_{m,\pm}$ is called an $m$-box. Usually we put $m$ strings on the top and $m$ strings on the bottom. Then the $m$-box space $\mathscr{P}_{m,\pm}$ forms an algebra, where we denote the multiplication of $x,y\in \mathscr{P}_{m,\pm}$ diagrammatically by
$$\graa{xy}.$$
The identity is given by the diagram with $m$ vertical strings, denoted by $I_m$.
\end{notation}

\begin{definition}
We denote the graded tensor product as follows:
\[
x\otimes_+y=\graa{x+y}\;,
\quad
x\otimes_-y=\graa{x-y}\;.
\]
\end{definition}
If $x$ and $y$ are homogenous, then we infer from para isotopy that $x\otimes_+y=\chi(|x|,|y|)\,x\otimes_- y$.
Under the multiplication and the graded tensor product $\otimes_+$, one obtains a $(G,\chi)$ graded tensor category. The objects are given by zero graded idempotents and the morphisms are given by maps from idempotents to idempotents.  We refer the readers to \cite{ENO,MPSD2n} for the planar algebra case.

\begin{definition}
A planar para algebra is called unital if the empty disc is a vector in $\mathscr{P}_{0,\pm,0}$, called the vacuum vector.
\end{definition}

\begin{definition}
A unital planar para algebra is called spherical, if \underline{$\dim \mathscr{P}_{0,\pm}=1$} and
$$\graa{leftx}=\graa{rightx},$$
for any 1-box $x$.
 Both $\mathscr{P}_{0,+}$ and $\mathscr{P}_{0,-}$ are identified as the ground field.
\end{definition}

\begin{proposition}
The linear functional $\grb{tr}$ on $m$-boxes is a trace, i.e., {}
  $$\grb{trxy}=\grb{tryx}.$$
We call it the (unnormalized) Markov trace.
\end{proposition}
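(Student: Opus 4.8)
The plan is to reduce to homogeneous arguments and then convert the asserted identity into a statement about phases that the bicharacter will make trivial. First I would use bilinearity of the partition function to assume $x\in\mathscr{P}_{m,\pm,g}$ and $y\in\mathscr{P}_{m,\pm,h}$ are homogeneous. By the grading axiom (iii), the closed diagrams representing $\mathrm{tr}(xy)$ and $\mathrm{tr}(yx)$ both lie in $\mathscr{P}_{0,\pm,g+h}$, since $g+h=h+g$ in the abelian group $G$.

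Next I would dispose of the case $g+h\neq 0$. Sphericality asserts $\dim\mathscr{P}_{0,\pm}=1$, and since the vacuum vector is zero-graded, all of $\mathscr{P}_{0,\pm}$ is concentrated in degree $0$; hence $\mathscr{P}_{0,\pm,g+h}=0$ whenever $g+h\neq 0$. In that case both $\mathrm{tr}(xy)$ and $\mathrm{tr}(yx)$ vanish and the identity holds trivially. This reduces everything to the case $h=-g$.

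For $h=-g$ I would argue geometrically. Draw $\mathrm{tr}(xy)$ as the closed diagram with $x$ stacked above $y$ and the outputs capped off by the closure strings on the right; the diagram for $\mathrm{tr}(yx)$ is the same closed picture with the cyclic positions of $x$ and $y$ interchanged, so the two are isotopic as unmarked tangles. The content is that realizing this interchange inside RT---keeping every $\$$ on the left so that axiom (i) applies---cannot be done by a phase-free planar isotopy: it decomposes into a para-isotopy interchange of the two boxes, contributing $\chi(|x|,|y|)=\chi(g,-g)=\chi(g,g)^{-1}$ by axiom (iv), together with a single clockwise $2\pi$ rotation of one box, contributing $\chi(g,g)$ by axiom (v). By the bicharacter identities $\chi(g,-g)=\chi(g,g)^{-1}$ and $\chi(-g,-g)=\chi(g,g)$, these two factors are mutually inverse, so the net phase is $1$ and $\mathrm{tr}(xy)=\mathrm{tr}(yx)$.

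The main obstacle is the bookkeeping in this last step: one must verify that the cyclic rearrangement taking the closure of $xy$ to that of $yx$ is built from exactly one para-isotopy swap and one full $2\pi$ rotation (rather than, say, a half rotation or two swaps), so that the accumulated phase is precisely $\chi(g,g)\cdot\chi(g,g)^{-1}=1$. Reassuringly, the phase produced by this rearrangement for general grades works out to $\chi(g,g+h)$, which is trivial whenever $g+h=0$; this shows the argument is insensitive to the precise normalization of the rotation in the degenerate cases, so the care is really only needed to pin down the geometry on the surviving diagonal $h=-g$.
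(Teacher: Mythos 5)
Your proposal is correct and follows essentially the same route as the paper's proof: reduce to homogeneous $x,y$, observe both sides vanish unless $|x|+|y|=0$ (since $\mathscr{P}_{0,\pm}$ is spanned by the zero-graded vacuum), and in the remaining case combine one para-isotopy interchange with one $2\pi$ rotation so that the phases $\chi(g,g)^{-1}$ and $\chi(g,g)$ cancel. Your closing observation that the accumulated phase is $\chi(g,g+h)$ in general, hence trivial exactly on the diagonal $h=-g$, is a nice consistency check but is the same mechanism the paper invokes.
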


\begin{proof}
It is enough to prove the equation for any homogenous $x$ and $y$. When the grading $|x|+|y|$ is not 0 mod $N$, both sides are zeros. When the grading $|x|+|y|$ is 0 mod $N$, applying the para isotopy and the $2\pi$ rotation of $x$, we obtain the equality.
\end{proof}

The normalized Markov trace $tr$ on $m$-boxes is given by $\frac{1}{\delta^m}\grb{tr}$.
The inclusion from $\mathscr{P}_{m,\pm}$ to $\mathscr{P}_{m+1,\pm}$ by adding one string to the right preserves the normalized trace.

\subsection{String Fourier transforms}
The string Fourier transform\footnote{We originally called this transformation  the ``quantum Fourier transform.''  Afterwards we realized that in quantum information one gives this name to usual Fourier series on $\Z_{d}$. So we replace ``quantum'' by ``string'' to reflect the geometric nature of this transformation.} $\mathfrak{F}_{\text{s}}$ 
is an important ingredient in planar (para) algebras. It behaves as a rotation in planar algebras.
The SFT is defined as the action of the following tangle,
$$\graa{Fourier}.$$
This definition is motivated by the Fourier transform on paragroups introduced by Ocneanu \cite{Ocn88}.
We also use other rotations on the $m$-box space, which are powers of the SFT:
\begin{itemize}
\item  Denote the $2\pi$ rotation by $\rho_{2\pi}=\mathfrak{F}_{\text{s}}^{2m}$.
\item Denote the $\pi$ rotation by $\rho_{\pi}=\mathfrak{F}_{\text{s}}^{m}$, which one also calls the contragredient map.  Note
	\be\label{PiRotation}
	\rho_{\pi}(xy)=\rho_{\pi}(y) \rho_{\pi}(x)\;,
	\quad\text{and}\quad
	\rho_{\pi}(x\otimes y)=\rho_{\pi}(y) \otimes \rho_{\pi}(x)\;.	
	\ee
\item For even $m$, denote the $\frac{\pi}{2}$ rotation by $\rho_{\frac{\pi}{2}}=\mathfrak{F}_{\text{s}}^{\frac{m}{2}}$.  This can also be considered as the string Fourier transform.
\end{itemize}
We refer the readers to Section 4 in \cite{Liuex} and \cite{JiaLiuWu} on the study of the string Fourier transform on subfactor planar algebras.

\subsection{Reflections} 
\label{sect:Reflections}
Two reflections  that play distinct roles are reflections about a vertical or horizontal line. The vertical reflection defines the usual adjoint in a planar para algebra.
\begin{definition}
We say a planar para algebra $\mathscr{P}_{\bullet}$ is a *-algebra, if there is an anti-linear involution $*: \mathscr{P}_{m,\pm, g} \rightarrow \mathscr{P}_{m,\pm,-g}$, for each $m$ and $g\in G$;
and $Z_{T^*}(x^*)=Z_T(x)^*$, for any $x$ in the tensor power of $\mathscr{P}_{n,\pm}$, where the tangle $T^*$ is the vertical reflection of the tangle $T$.
\end{definition}

\begin{definition}
An anti-linear involution $\Theta$ on the unshaded planar para algebra $\mathscr{P}_{\bullet}$ is called a horizontal reflection,
if $\Theta: \mathscr{P}_{m,\pm, g} \rightarrow \mathscr{P}_{m,\pm,-g}$, for each even $m$ and $g\in G$;
$\Theta: \mathscr{P}_{m,\pm, g} \rightarrow \mathscr{P}_{m,\mp,-g}$, for each odd $m$ and $g\in G$;
and $\Theta(Z_T(x))= Z_{\Theta(T)}(\Theta(x))$, where the tangle $\Theta(T)$ is the horizontal reflection of the tangle $T$. In particular, the reflection $\Theta$ acts as  $\Theta(x\otimes_+ y)= \Theta(y) \otimes_- \Theta(x)$ and $\Theta(xy)=\Theta(x)\Theta(y)$.
\end{definition}

Consider the example of the group $G=\mathbb{Z}_{N}$, and  the bicharacter $\chi(j,k)=q^{jk}$, where $q=e^{\frac{2\pi i}{N}}$. Choose $\zeta$ to be a square root of $q$ such that $\zeta^{N^2}=1$. Then
	\be\label{ZetaChoice}
	\zeta
	= \left\{
	\begin{matrix}
	-e^{\frac{\pi i}{N}}\;, \text{ if $N$ is odd}\hfill\\
	\pm e^{\frac{\pi i}{N}}\;, \text{ if $N$ is even}\hfill
	\end{matrix}
	\right.\;.
	\ee
In the odd case with one solution, also $\zeta^{N}=1$.  In the even case one must choose one of the two solutions throughout, and also $\zeta^{N}=-1$.

\begin{proposition}\label{gauss sum}
Let $\zeta$ be a square root of $q=e^{\frac{2\pi i}{N}}$, such that $\zeta^{N^2}=1$. Define
	\be\label{Sum}
		\omega
		=
		\frac{1}{\sqrt{ N}}\sum_{j=0}^{N-1} \zeta^{j^{2}}\;.
		\quad \text{Then}\quad
		\abs{\omega}=1\;.
	\ee
\end{proposition}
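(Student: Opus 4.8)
The plan is to compute $|\omega|^2 = \omega\overline{\omega}$ directly and show it equals $1$, using the classical squaring trick for quadratic Gauss sums. Since $\zeta$ is a root of unity we have $\overline{\zeta^{k^2}} = \zeta^{-k^2}$, so
$$
|\omega|^2 = \frac{1}{N}\sum_{j=0}^{N-1}\sum_{k=0}^{N-1}\zeta^{j^2-k^2}\;.
$$
The strategy is then to reduce this double sum to a single surviving term by a change of variables and the orthogonality of characters.

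First I would establish the crucial periodicity that $\zeta^{j^2}$ depends only on $j \bmod N$. From $\zeta^2 = q$ and $q^N = 1$ one gets $\zeta^{2N} = 1$, and combining this with the hypothesis $\zeta^{N^2} = 1$ yields
$$
\zeta^{(j+N)^2} = \zeta^{j^2}\,\zeta^{2jN}\,\zeta^{N^2} = \zeta^{j^2}\;.
$$
This is precisely the role of the assumption $\zeta^{N^2}=1$: it allows me to regard the summation indices $j,k$ as elements of $\Z/N$, so that the forthcoming substitution is well defined.

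Next I would perform the change of variables $j = k+m$ with $k,m$ running over $\Z/N$, which is a bijection on pairs. Expanding gives $\zeta^{j^2-k^2} = \zeta^{m^2}\zeta^{2km} = \zeta^{m^2} q^{km}$, so the double sum factors as
$$
|\omega|^2 = \frac{1}{N}\sum_{m=0}^{N-1}\zeta^{m^2}\sum_{k=0}^{N-1} q^{km}\;.
$$
The inner sum is the orthogonality relation for the characters of $\Z/N$, equal to $N$ when $m\equiv 0 \pmod N$ and to $0$ otherwise. Only the $m=0$ term survives, giving $|\omega|^2 = \tfrac{1}{N}\cdot\zeta^0\cdot N = 1$, hence $\abs{\omega}=1$.

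The main obstacle—indeed essentially the only nontrivial point—is the legitimacy of the change of variables, which rests entirely on the periodicity $\zeta^{(j+N)^2}=\zeta^{j^2}$; once that is in hand, the remainder is the standard Gauss-sum collapse and, pleasingly, requires no separate treatment of the odd and even cases of \eqref{ZetaChoice}. I would therefore take care to verify the periodicity step explicitly, since dropping the hypothesis $\zeta^{N^2}=1$ would destroy the well-definedness on $\Z/N$ and break the argument.
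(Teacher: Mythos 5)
Your proof is correct, but it takes a genuinely different route from the paper's. You square the Gauss sum directly: after verifying the periodicity $\zeta^{(j+N)^2}=\zeta^{j^2}$ (which, as you note, uses exactly $\zeta^{2N}=1$ and the hypothesis $\zeta^{N^2}=1$), you substitute $j=k+m$ over $\Z/N$ and collapse the double sum by orthogonality of the characters $k\mapsto q^{km}$. The paper instead packages the same algebra in terms of the Fourier transform $F$ on $\Z_N$: with $f(i)=\zeta^{i^2}$ and $g(j)=\zeta^{-j^2}$, the completing-the-square identity $q^{ij}\zeta^{i^2}=\zeta^{(i+j)^2}\zeta^{-j^2}$ and the same periodicity give $Ff=\omega g$ and $Fg=\overline{\omega}f$, and then $\abs{\omega}^2=1$ follows from Fourier inversion $(F^2f)(-i)=f(i)$ together with the evenness $f(-i)=f(i)$. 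The core ingredients (completing the square, periodicity from $\zeta^{N^2}=1$) are identical, and your character-orthogonality step is essentially Fourier inversion in disguise, so your argument is the more elementary and self-contained of the two. What the paper's formulation buys is reuse: the intermediate identity $Ff=\omega g$ --- the statement that the Gaussian is carried by $F$ to its conjugate up to the phase $\omega$ --- is precisely the computation invoked later in the proof of Proposition \ref{fourierbraid} to show that the two braids are exchanged by the string Fourier transform, whereas your computation yields only the final equality $\abs{\omega}=1$.
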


\begin{proof}
The Fourier transform $F$ on $\Z_{N}$ is
	\be\label{FourierInversion}
		(Ff)(j)
		= \frac{1}{\sqrt{N}} \sum_{i=0}^{N-1} q^{ij} f(i)\;,
		\quad\text{with inverse}\quad(F^{2}f)(-i)=f(i)\;.
	\ee
%Let $\omega=\frac{1}{\sqrt{ N}}\sum_{j=0}^{N-1} \zeta^{j^{2}}$, so we wish to show $\abs{\omega}=1$. Also
Let $f(i)=\zeta^{i^2}$ and  $g(j)= \zeta^{-j^2}$. Then
\[
	(Ff)(j)
	= \frac{1}{\sqrt{N}} \sum_{i=0}^{N-1} q^{ij}\,\zeta^{i^2}
	= \frac{1}{\sqrt{N}} \sum_{i=0}^{N-1} \zeta^{(i+j)^2} \,\zeta^{-j^2}
	=\omega\, g(j)\;.
\]
In the last equality,  we use that $\zeta^{N^{2}}=1$, so the sum of $\zeta^{(i+j)^{2}}$ over $i$ is independent of $j$. Similarly $(Fg)(i)=\overline{\omega}f(i)$.   But using the Fourier inversion identity of \eqref{FourierInversion}, as well as $f(-i)=f(i)$ in our case, we infer $\abs{\omega}^{2}=1$.
\end{proof}

Recall that the $2\pi$ rotation is not the identity on a $(\mathbb{Z}_N,\chi)$ planar para algebra.
 Also recall that for $\zeta^{2N}=\zeta^{N^2}=1$, the power $\zeta^{-|x|_{}^2}$ is well-defined.
 \begin{definition}
Define the reflection $\Theta=\Theta_{\zeta}$ as an antilinear extension of the operator on homogeneous elements $x$ given by
 	\be\label{ReflectionDef}
	\Theta(x)= \zeta^{-|x|_{}^2} \rho_{\pi}(x^*)\;.
	\ee
\end{definition}

\begin{proposition}\label{reflections}
On a $(\mathbb{Z}_N,\chi)$ planar para *-algebra,
the map $\Theta$ defined in \eqref{ReflectionDef}  is a horizontal reflection.
\end{proposition}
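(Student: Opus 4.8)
The plan is to read $\Theta=\zeta^{-|\cdot|^{2}}\,\rho_{\pi}\circ{*}$ as the vertical reflection ${*}$ followed by the $\pi$-rotation $\rho_{\pi}=\mathfrak{F}_{\mathrm{s}}^{m}$, corrected by a grading-dependent scalar. The geometric input is the planar identity $H=R_{\pi}\circ V$: a horizontal reflection is a vertical reflection followed by a half-turn. Hence $\rho_{\pi}\circ{*}$ already implements, at the level of diagrams, the horizontal reflection $\Theta(T)$ of every tangle $T$, and the only real content is to check that the scalar $\zeta^{-|x|^{2}}$ is exactly the phase needed to make the combined operation respect the $\mathbb{Z}_{N}$-graded (para) structure. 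Accordingly I would verify, in turn, the four clauses in the definition of a horizontal reflection: anti-linearity with the grading/shading bookkeeping, involutivity $\Theta^{2}=\mathrm{id}$, the two algebraic identities $\Theta(xy)=\Theta(x)\Theta(y)$ and $\Theta(x\otimes_{+}y)=\Theta(y)\otimes_{-}\Theta(x)$, and finally the general naturality $\Theta(Z_{T}(x))=Z_{\Theta(T)}(\Theta(x))$.

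\textbf{Anti-linearity, grading, and involution.} Anti-linearity is immediate, since ${*}$ is anti-linear, $\rho_{\pi}$ is linear, and the scalar depends only on $|x|$. For the grading, recall ${*}\colon\mathscr{P}_{m,\pm,g}\to\mathscr{P}_{m,\pm,-g}$, while $\rho_{\pi}$, being a one-disc tangle, preserves the $G$-grading by axiom (iii) and, being $m$ applications of the one-string rotation $\mathfrak{F}_{\mathrm{s}}$, flips the shading once per string; the net effect on $\pm$ is therefore trivial for even $m$ and a flip for odd $m$. Composing gives exactly $\Theta\colon\mathscr{P}_{m,\pm,g}\to\mathscr{P}_{m,\pm,-g}$ for even $m$ and $\mathscr{P}_{m,\pm,g}\to\mathscr{P}_{m,\mp,-g}$ for odd $m$. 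For involutivity, write $g=|x|$ and compute $\Theta^{2}(x)$: anti-linearity turns the outer scalar into $\overline{\zeta^{-g^{2}}}=\zeta^{g^{2}}$, while applying $\Theta$ to the grading-$(-g)$ vector $\rho_{\pi}(x^{*})$ contributes $\zeta^{-g^{2}}$, so the two scalars cancel and $\Theta^{2}(x)=\rho_{\pi}\bigl((\rho_{\pi}(x^{*}))^{*}\bigr)$. The decisive point is that a vertical reflection conjugates a rotation into its inverse, $(\rho_{\pi}z)^{*}=\rho_{\pi}^{-1}(z^{*})$; taking $z=x^{*}$ gives $\Theta^{2}(x)=\rho_{\pi}(\rho_{\pi}^{-1}(x))=x$. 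I would stress that this is the genuinely para feature: geometrically $R_{\pi}=R_{-\pi}$, but as operators $\rho_{\pi}^{2}=\rho_{2\pi}=\chi(g,g)=q^{g^{2}}\neq\mathrm{id}$, so $\rho_{\pi}^{-1}\neq\rho_{\pi}$ on charged vectors, and $(\rho_{\pi}z)^{*}=\rho_{\pi}^{-1}(z^{*})$ is what makes the half-turn reflection an involution.

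\textbf{The algebraic identities (main obstacle).} This is where the scalar $\zeta^{-|x|^{2}}$ earns its keep, and I expect it to be the hard part. Using $(xy)^{*}=y^{*}x^{*}$ together with the contragredient relation $\rho_{\pi}(ab)=\rho_{\pi}(b)\rho_{\pi}(a)$ from \eqref{PiRotation}, the map $\rho_{\pi}\circ{*}$ sends $xy$ to $\rho_{\pi}(x^{*})\rho_{\pi}(y^{*})$, the same two inputs that appear in $\Theta(x)\Theta(y)$. The only discrepancy is the scalar, and since $\zeta^{2}=q$ one has $\zeta^{-(g+h)^{2}}=\zeta^{-g^{2}}\zeta^{-h^{2}}q^{-gh}$ with $g=|x|$, $h=|y|$. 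The surplus factor $q^{-gh}=\chi(g,h)^{-1}$ must be absorbed by the $\chi$-phase that para isotopy (axiom (iv)) produces when the combined rotation-reflection drags the charged strings of $x$ past those of $y$; the careful diagrammatic accounting of that phase is the technical heart of the proof. For the tensor identity one proceeds identically, now using $\rho_{\pi}(a\otimes b)=\rho_{\pi}(b)\otimes\rho_{\pi}(a)$, the action of ${*}$ on $\otimes_{+}$, and the conversion $x\otimes_{+}y=\chi(|x|,|y|)\,x\otimes_{-}y$; the net $\chi$-phase again cancels against $\zeta^{-(g+h)^{2}}/(\zeta^{-g^{2}}\zeta^{-h^{2}})$. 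This is exactly where the constraints $\zeta^{2}=q$ and $\zeta^{N^{2}}=1$ (so that $\zeta^{-|x|^{2}}$ is well defined) are used.

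\textbf{General naturality.} Finally, for $\Theta(Z_{T}(x))=Z_{\Theta(T)}(\Theta(x))$ I would use the diagrammatic identity $\Theta(T)=\rho_{\pi}(T^{*})$ coming from $H=R_{\pi}\circ V$, and reduce to the cases already treated: every regular tangle is generated, under composition, by multiplications, twisted tensor products, the string Fourier transform, and the units (caps and cups). Invoking naturality (axiom (ii)) and RT isotopy invariance (axiom (i)) to propagate $\Theta$ through these compositions, and re-using the $\chi$/$\zeta$ bookkeeping of the previous step at each elementary stage, yields the identity for arbitrary $T$. Combined with the clauses above, this shows $\Theta$ is a horizontal reflection.
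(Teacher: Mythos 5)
Your overall strategy coincides with the paper's: factor the horizontal reflection as a $\pi$-rotation composed with the vertical reflection and complex conjugation, and show that the scalar $\zeta^{-|x|^{2}}$ plus para-isotopy phases makes this consistent. But your central step has a genuine gap, and as written it is self-undermining. In the multiplicativity argument you take \eqref{PiRotation} as an exact identity, which yields $\Theta(xy)=\zeta^{-(g+h)^{2}}\rho_{\pi}(x^{*})\rho_{\pi}(y^{*})=q^{-gh}\,\Theta(x)\Theta(y)$, and you then assert that the surplus $q^{-gh}$ ``must be absorbed'' by a para-isotopy phase. Once you have used \eqref{PiRotation} to rewrite $\rho_{\pi}((xy)^{*})$ algebraically, however, the computation is finished; there is no diagram move left to produce any further phase, so you would have proved $\Theta(xy)=q^{-gh}\Theta(x)\Theta(y)$, i.e.\ disproved the proposition. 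The resolution is that \eqref{PiRotation}, read literally, does not hold for graded elements of a planar para algebra: the correct para version is $\rho_{\pi}(ab)=\chi(|a|,|b|)\,\rho_{\pi}(b)\rho_{\pi}(a)$. You can test this in the parafermion algebra itself, where $\rho_{\pi}(c^{k})=\zeta^{k^{2}}c^{k}$, so that $\rho_{\pi}(c^{g}c^{h})=\zeta^{(g+h)^{2}}c^{g+h}=q^{gh}\,\rho_{\pi}(c^{h})\rho_{\pi}(c^{g})$. With the corrected identity (applied to $a=y^{*}$, $b=x^{*}$, giving the phase $\chi(-h,-g)=q^{gh}$) the discrepancy cancels and $\Theta(xy)=\Theta(x)\Theta(y)$ follows. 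So the ``technical heart'' you defer is not optional bookkeeping: it is the entire content of the proposition, and it is exactly what the paper's proof supplies, in one stroke and for an arbitrary labelled tangle --- there each pair of labels contributes a para-isotopy phase $q^{-g_{i}g_{i'}}$, and $\prod_{i}\zeta^{-g_{i}^{2}}\prod_{i<i'}q^{-g_{i}g_{i'}}=\zeta^{-|x|^{2}}$ because $\zeta^{2}=q$.

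Your final reduction of general naturality to ``generating'' tangles (multiplications, twisted tensor products, SFT, cups and caps) has the same status: the generation claim is not proved, and every composition step needs precisely the phase accounting that is missing above. The paper sidesteps both issues by verifying $Z_{\Theta(T)}(\Theta(x))=\Theta(Z_{T}(x))$ directly for an arbitrary labelled regular tangle, which makes the separate checks of $\Theta(xy)=\Theta(x)\Theta(y)$ and $\Theta(x\otimes_{+}y)=\Theta(y)\otimes_{-}\Theta(x)$ automatic corollaries rather than stepping stones. On the positive side, your verifications of anti-linearity, the grading and shading bookkeeping, and especially involutivity via $(\rho_{\pi}z)^{*}=\rho_{\pi}^{-1}(z^{*})$ (needed precisely because $\rho_{\pi}^{2}=\rho_{2\pi}\neq\mathrm{id}$ on charged vectors) are correct, and these are details the paper leaves implicit.
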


\begin{proof}
The horizontal reflection is the composition of an anti-clockwise $\pi$ rotation, a vertical reflection and a complex conjugation.
Suppose $T(x)$ is a labelled tangle for a regular planar tangle $T$ and $\displaystyle x=\otimes_i x_i$. Assume that the $i$th label $x_i$ is graded by $g_i$. Then the label $\Theta(x_i)$ in $\Theta(x)$ is graded by $-g_i$, and $\Theta(x_i)=\zeta^{-g_i^2} \rho_{\pi}(x_i)$.
The para isotopy of each pair of labels contributes a scalar $q^{(-g_i)(-g_j)}$. Therefore
\begin{align*}
Z_{\Theta(T)}(\Theta(x))&= \prod_i \zeta^{-g_i^2} \times \prod_{i,i'} q^{-g_ig_{i'}} \rho_{\pi}(Z_{T^*}(x^*))\\
&= \zeta^{-|x|_{}^2} \rho_{\pi}(Z_T(x)^*)\\
&=\Theta(Z_T(x))\\
\end{align*}
\end{proof}

\subsection{The Twisted Tensor Product}\label{Sect:TwistedTensorProduct}
To introduce reflection positivity, the reflection $\Theta(x)$ should be the horizontal reflection of $x$; it should be represented as a box beside $x$, namely on the same level, with also the $\$$ signs on the same horizontal level.
But equal levels are not permitted in planar para algebras.

In order to avoid this difficulty, we introduce the twisted tensor product, which plays the same role as the twisted product for parafermion algebras in \cite{JafFab15,JafJan2016}.   For any homogenous $x$, we have $|\Theta(x)|_{}=-|x|_{}$. By para isotopy,
$$\Theta(x)\otimes_+ x=q^{-|x|_{}^2}\Theta(x)\otimes_- x.$$

\begin{definition}[\bf Twisted tensor product]\label{TwistedProduct-1}
Let  the twisted tensor product of $\Theta(x)$ and $x$ be
	\be\label{TT}
	\Theta(x) \otimes_{t} x:= \zeta^{|x|_{}^2 } \Theta(x)\otimes_+ x=\zeta^{-|x|_{}^2} \Theta(x)\otimes_- x\;,
	\ee
pictorially denoted by putting $x$ and $\Theta(x)$ on the same level,
	\be
	\raisebox{-.6cm}{
\tikz{
\draw (0-1/6,0) rectangle (1/2+1/6,1/2);
\node at (1/4,1/4) {\size{$\Theta(x)$}};
\node at (1/4,3/4) {\size{$\cdots$}};
\node at (1/4,-1/4) {\size{$\cdots$}};
\draw (0,0)--(0,-2/6);
\draw (1/2,0)--(1/2,-2/6);
\draw (0,1/2)--(0,5/6);
\draw (1/2,1/2)--(1/2,5/6);
\draw (1-1/6,0) rectangle (1+1/2+1/6,1/2);
\node at (1+1/4,1/4) {\size{$x$}};
\node at (1+1/4,3/4) {\size{$\cdots$}};
\node at (1+1/4,-1/4) {\size{$\cdots$}};
\draw (1+0,0)--(1+0,-2/6);
\draw (1+1/2,0)--(1+1/2,-2/6);
\draw (1+0,1/2)--(1+0,5/6);
\draw (1+1/2,1/2)--(1+1/2,5/6);
}}\;.
	\ee
\end{definition}

\begin{proposition}
For homogenous $x$ and $y$ in $\mathscr{S}_{m,\pm}$, we have
\begin{align*}
 (\Theta(x) \otimes_{t} x)(\Theta(y) \otimes_{t} y)&=\Theta(xy) \otimes_{t} xy.
\end{align*}
\end{proposition}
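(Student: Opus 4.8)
The plan is to reduce immediately to homogeneous $x,y$ (the general case follows since both sides are bilinear), write $a=|x|$ and $b=|y|$, and expand each twisted factor by its defining formula \eqref{TT}: $\Theta(x)\otimes_t x=\zeta^{a^2}\,\Theta(x)\otimes_+ x$ and $\Theta(y)\otimes_t y=\zeta^{b^2}\,\Theta(y)\otimes_+ y$. This turns the left-hand side into $\zeta^{a^2+b^2}\,(\Theta(x)\otimes_+ x)(\Theta(y)\otimes_+ y)$, so that the whole assertion reduces to evaluating one ordinary product of $2m$-boxes and then matching powers of $\zeta$ against the target scalar $\zeta^{|xy|^2}$.

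First I would evaluate that product columnwise. Since $\otimes_+$ places two $m$-boxes side by side to form a $2m$-box and multiplication is vertical stacking, the left $m$ strings of the two factors compose among themselves and the right $m$ strings compose among themselves, with no string crossing a box; hence $(\Theta(x)\otimes_+ x)(\Theta(y)\otimes_+ y)=(\Theta(x)\Theta(y))\otimes_+(xy)$. This is exactly the picture in Definition \ref{TwistedProduct-1}: stacking the two ``same level'' diagrams leaves $\Theta(x)\Theta(y)$ beside $xy$ on one common level. Next I would replace $\Theta(x)\Theta(y)$ by a multiple of $\Theta(xy)$, using the explicit form \eqref{ReflectionDef}, the identity \eqref{PiRotation} for $\rho_{\pi}$, and $(xy)^*=y^*x^*$: one gets $\Theta(x)\Theta(y)=\zeta^{-a^2-b^2}\rho_{\pi}(x^*)\rho_{\pi}(y^*)=\zeta^{-a^2-b^2}\rho_{\pi}((xy)^*)=\zeta^{2ab}\,\Theta(xy)$, because $\Theta(xy)=\zeta^{-(a+b)^2}\rho_{\pi}((xy)^*)$ and $|xy|=a+b$. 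Substituting and using $\zeta^2=q$ gives $\zeta^{a^2+b^2}\,\zeta^{2ab}\,\Theta(xy)\otimes_+(xy)=\zeta^{(a+b)^2}\,\Theta(xy)\otimes_+(xy)=\Theta(xy)\otimes_t xy$, the last step being \eqref{TT} once more.

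The columnwise factorization and the bilinear reduction are routine; the step that needs care, and the main obstacle, is the scalar bookkeeping. One must confirm that the level-shift scalar $\zeta^{a^2+b^2}$ carried by the two twisted factors, the para-isotopy scalar $\zeta^{2ab}=q^{ab}$ relating $\Theta(x)\Theta(y)$ to $\Theta(xy)$, and the target scalar $\zeta^{|xy|^2}=\zeta^{(a+b)^2}$ are mutually consistent. This is precisely the completion of the square $a^2+b^2+2ab=(a+b)^2$, and it closes only because $\zeta$ was chosen with $\zeta^2=q$ and because the grading is additive, $|xy|=|x|+|y|$. Indeed, the factor $\zeta^{2ab}$ is exactly the scalar that reconciles the plain product $\Theta(x)\Theta(y)$ with the value $\Theta(xy)$ expected from the ``same level'' homomorphism property of $\Theta$, and it is the same completion-of-the-square that makes the two descriptions of $\otimes_t$ in \eqref{TT} agree; so the only real content of the proof is checking that these three scalar contributions balance.
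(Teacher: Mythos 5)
Your skeleton---expand both twisted factors via \eqref{TT}, reduce to an ordinary product of $2m$-boxes, and close with the square $a^{2}+b^{2}+2ab=(a+b)^{2}$ where $a=|x|$, $b=|y|$---is exactly the completion-of-the-square that the paper's one-line proof records as $\zeta^{|x|^{2}}\zeta^{|y|^{2}}q^{|x||y|}=\zeta^{(|x|+|y|)^{2}}$. But your bookkeeping attaches the crucial factor $q^{ab}$ to the wrong step: each of your two intermediate identities is false, and they fail by reciprocal scalars, which is why the final answer still comes out right. First, the ``columnwise'' factorization $(\Theta(x)\otimes_{+}x)(\Theta(y)\otimes_{+}y)=(\Theta(x)\Theta(y))\otimes_{+}(xy)$ is \emph{not} scalar-free in a planar para algebra. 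In the product diagram the four boxes sit at interleaved heights $\Theta(x),x,\Theta(y),y$ (top to bottom), while the right-hand side needs the heights $\Theta(x),\Theta(y),x,y$; sliding $x$ (graded $a$) vertically past $\Theta(y)$ (graded $-b$) is a para isotopy, and it is exactly this interchange that produces $q^{ab}$. ``No string crosses a box'' is the planar-algebra intuition; the para structure is precisely what invalidates it. You can check this concretely in $PF_{\bullet}$: with $c_{1}=c\otimes_{+}I_{1}$, $c_{2}=I_{1}\otimes_{+}c$, so that $c^{i}\otimes_{+}c^{j}=c_{1}^{i}c_{2}^{j}$ and $c_{1}c_{2}=q\,c_{2}c_{1}$, one gets $(c_{1}^{-a}c_{2}^{a})(c_{1}^{-b}c_{2}^{b})=q^{ab}\,c_{1}^{-(a+b)}c_{2}^{a+b}$, not $c_{1}^{-(a+b)}c_{2}^{a+b}$.

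Second, your compensating claim $\Theta(x)\Theta(y)=q^{ab}\,\Theta(xy)$ contradicts the paper: a horizontal reflection satisfies $\Theta(xy)=\Theta(x)\Theta(y)$ on the nose, and Proposition \ref{reflections} asserts that the $\Theta$ of \eqref{ReflectionDef} is such a reflection (in $PF_{\bullet}$ this is visible directly: $\Theta(c^{a})=c^{-a}$, so $\Theta(c^{a})\Theta(c^{b})=c^{-a-b}=\Theta(c^{a}c^{b})$ with no scalar). The root of the error is reading \eqref{PiRotation} literally as an exact anti-homomorphism property of $\rho_{\pi}$. That cannot hold on elements of nonzero grading: if it did, $\rho_{2\pi}=\rho_{\pi}^{2}$ would be multiplicative, whereas the rotation axiom (v) gives $\rho_{2\pi}(xy)=q^{(a+b)^{2}}xy\neq q^{a^{2}+b^{2}}xy=\rho_{2\pi}(x)\rho_{2\pi}(y)$; equivalently it would force $\mathfrak{F}_{\text{s}}(c^{k})=\zeta^{k}c^{k}$, contradicting the paper's relation $\mathfrak{F}_{\text{s}}(c^{k})=\zeta^{k^{2}}c^{k}$. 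The para-corrected statement is $\rho_{\pi}(uv)=\chi(|u|,|v|)\,\rho_{\pi}(v)\rho_{\pi}(u)$, and with it your own computation returns $\Theta(x)\Theta(y)=\Theta(xy)$ exactly. So the single factor $q^{ab}$ in the paper's identity belongs to the diagrammatic interchange (para isotopy), not to $\Theta$; as written, your proof rests on two wrong identities whose errors cancel, and it becomes correct once the scalar is moved from your third step to your second.
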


\begin{proof}
It follows from the equality
$\zeta^{|x|_{}^2 }\zeta^{|y|_{}^2} q^{|x|_{}|y|_{}}=\zeta^{(|x|_{}+|y|_{})^2 }.$
\end{proof}

\begin{proposition}\label{rotation isotopy}
  For any homogenous $m$-box $x$, we have
  $$\mathfrak{F}_{\text{s}}^{-m}\left(
  \raisebox{-.6cm}{
\tikz{
\draw (0-1/6,0) rectangle (1/2+1/6,1/2);
\node at (1/4,1/4) {\size{$\Theta(x)$}};
\node at (1/4,3/4) {\size{$\cdots$}};
\node at (1/4,-1/4) {\size{$\cdots$}};
\draw (0,0)--(0,-2/6);
\draw (1/2,0)--(1/2,-2/6);
\draw (0,1/2)--(0,5/6);
\draw (1/2,1/2)--(1/2,5/6);
\draw (1-1/6,0) rectangle (1+1/2+1/6,1/2);
\node at (1+1/4,1/4) {\size{$x$}};
\node at (1+1/4,3/4) {\size{$\cdots$}};
\node at (1+1/4,-1/4) {\size{$\cdots$}};
\draw (1+0,0)--(1+0,-2/6);
\draw (1+1/2,0)--(1+1/2,-2/6);
\draw (1+0,1/2)--(1+0,5/6);
\draw (1+1/2,1/2)--(1+1/2,5/6);
}}
  \right)=\grb{xex}.$$
\end{proposition}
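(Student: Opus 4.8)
The plan is to treat this as a purely diagrammatic identity whose content is that the inverse string Fourier transform $\mathfrak{F}_{\text{s}}^{-m}$ realizes a quarter-turn rotation folding the ``same-level'' twisted product into a vertically stacked diagram. First I would clear the scalars. Substituting the definition \eqref{ReflectionDef} of $\Theta$ into the twisted product \eqref{TT}, the two $\zeta$-powers cancel, $\zeta^{|x|^2}\zeta^{-|x|^2}=1$, so that $\Theta(x)\otimes_t x = \rho_\pi(x^*)\otimes_+ x$. Thus it suffices to prove $\mathfrak{F}_{\text{s}}^{-m}\!\left(\rho_\pi(x^*)\otimes_+ x\right)=\grb{xex}$, with no residual phase. (Here $\zeta^{\pm|x|^2}$ is well-defined because $\zeta^{N^2}=1$.)

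Next I would identify the rotation explicitly. Since $\rho_\pi(x^*)\otimes_+ x$ is a $2m$-box, and on a $k$-box one has $\rho_{2\pi}=\mathfrak{F}_{\text{s}}^{2k}$, the relevant powers for $k=2m$ are $\rho_{2\pi}=\mathfrak{F}_{\text{s}}^{4m}$, $\rho_\pi=\mathfrak{F}_{\text{s}}^{2m}$, and $\rho_{\pi/2}=\mathfrak{F}_{\text{s}}^{m}$; hence $\mathfrak{F}_{\text{s}}^{-m}=\rho_{-\pi/2}$ is exactly the clockwise quarter-turn. I would then invoke RT isotopy invariance (condition (i)) to carry out this quarter-turn as an honest orientation-preserving isotopy of the plane: the two boxes, drawn side by side on one level with their $\$$-signs aligned, rotate so that $x$ and the already $\pi$-rotated box $x^*$ come to lie one above the other, while the $m$ strings along their common interface turn back to form the $m$ caps constituting the element $e$ in $\grb{xex}$. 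The $\pi$-rotation built into $\Theta$ is precisely what makes the marked points and shadings of $x^*$ match those of $x$ across the caps after the quarter-turn, so that the folded picture is exactly $\grb{xex}$.

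The main obstacle is the phase bookkeeping. Rotating a $g$-graded box and commuting graded boxes past one another are not free: by the rotation rule (condition (v)) a full $2\pi$ turn of a $g$-graded vector costs $\chi(g,g)$, and by para isotopy (condition (iv)) interchanging the levels of a $g$- and an $h$-graded box costs $\chi(g,h)$. Hence, as the quarter-turn drags $x$ (grading $|x|$) and $\rho_\pi(x^*)$ (grading $-|x|$, since $*$ negates the grading and $\rho_\pi$ preserves it) into their stacked positions, one accumulates $\zeta$- and $q$-phases depending on $|x|$. The crux is to verify that these multiply to $1$, so that the identity holds on the nose rather than up to a scalar. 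I expect this to reduce to the same arithmetic used for the twisted product, namely $\zeta^{|x|^2}\zeta^{|y|^2}q^{|x||y|}=\zeta^{(|x|+|y|)^2}$ taken with $|y|=-|x|$, whose right side is $\zeta^{0}=1$; carrying out this cancellation carefully while reading off the folded picture is the one genuinely delicate step.
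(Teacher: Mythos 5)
Your first two steps are exactly the paper's proof: the paper's argument is literally ``Note that $\Theta(x)\otimes_t x=\rho_\pi(x^*)\otimes_+ x$'' (your scalar cancellation, using $\zeta^{|x|^2}\zeta^{-|x|^2}=1$), followed by applying $\mathfrak{F}_{\text{s}}^{-m}$ and concluding by isotopy; your identification of $\mathfrak{F}_{\text{s}}^{-m}$ as the quarter-turn on $2m$-boxes is also correct.

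The problem is your third step, which you single out as the crux. After step 1 there are no phases left to track, and none are generated by the rotation. Applying $\mathfrak{F}_{\text{s}}^{-m}$ to the labelled tangle $\rho_\pi(x^*)\otimes_+ x$ is an instance of naturality (axiom (ii)): it produces a single composite labelled tangle in which the $\pi$-rotation of the $x^*$-disc is already part of the tangle, so no individual disc ever undergoes a $2\pi$ turn during the subsequent deformation, and axiom (v) is never invoked. Likewise, the connecting deformation to $\grb{xex}$ is an RT isotopy, in which no two input discs exchange vertical order, so para isotopy (axiom (iv)) is never invoked either. Consequently there is nothing to cancel; and your proposed fallback could not do the job anyway, since with $|y|=-|x|$ the identity $\zeta^{|x|^2}\zeta^{|y|^2}q^{|x||y|}=\zeta^{(|x|+|y|)^2}$ reads $1=1$, and the factors $\zeta^{\pm|x|^2}$ it involves were already consumed in step 1 -- they cannot be spent a second time to absorb phases allegedly produced by the quarter-turn. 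The genuinely convention-dependent point, which your plan should verify in place of the phase bookkeeping, is that it is the $\otimes_+$ placement (and not $\otimes_-$) whose relative disc heights agree, after the quarter-turn, with the heights in the stacked diagram $\grb{xex}$; this is precisely what allows the connecting isotopy to stay inside RT. Had the definitions left you with the $\otimes_-$ form, an interchange of levels would be forced, costing a para-isotopy factor $q^{|x|^2}$ that nothing would cancel, and the identity would fail by exactly that phase. So the delicate step is not a cancellation of accumulated phases but the check that the isotopy never leaves the class of regular tangles.
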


\begin{proof}
Note that $\Theta(x) \otimes_{t} x=\rho_{\pi}(x^*)\otimes_+ x$. Applying $\mathfrak{F}_{\text{s}}^{-m}$, we obtain the equality by isotopy.
\end{proof}

\begin{definition}[General twisted tensor product]\label{Def:twist tensor general}
To define the twisted tensor product in general, we lift the grading from $\Z_{N}$ to $\Z$ and define the twisted tensor product based on the lifted grading.
Suppose $x$ and $y$ are homogenous,
$i,j\in \mathbb{Z}$, such that $|x|,|y|$ are $i,j$ mod $N$.
We define the twisted tensor product of $(x,i)$ and $(y,j)$ as
\begin{align}
(x,i) \otimes_{t} (y,j) &:=  (\zeta^{-ij}x\otimes_+ y, i+j)  =(\zeta^{ij}  x\otimes_- y, i+j) \;.
\end{align}
We define $\Pi$ as $\Pi((x,i))=x$.
Then $\Pi((x,i) \otimes_{t} (y,j))$ is an interpolation between $x\otimes_+ y$ and $x\otimes_- y$.
\end{definition}
Note that when $N$ is odd, the interpolation $\zeta^{-ij}x\otimes_+ y$ is independent of the choice of $i,j$.
When $N$ is even, there are two  interpolations depending on the choice of $i,j$.
We use 
\be
\raisebox{-.6cm}{
\tikz{
\draw (0-1/6,0) rectangle (1/2+1/6,1/2);
\node at (1/4,1/4) {\size{$x,i$}};
\node at (1/4,3/4) {\size{$\cdots$}};
\node at (1/4,-1/4) {\size{$\cdots$}};
\draw (0,0)--(0,-2/6);
\draw (1/2,0)--(1/2,-2/6);
\draw (0,1/2)--(0,5/6);
\draw (1/2,1/2)--(1/2,5/6);
\draw (1-1/6,0) rectangle (1+1/2+1/6,1/2);
\node at (1+1/4,1/4) {\size{$y,j$}};
\node at (1+1/4,3/4) {\size{$\cdots$}};
\node at (1+1/4,-1/4) {\size{$\cdots$}};
\draw (1+0,0)--(1+0,-2/6);
\draw (1+1/2,0)--(1+1/2,-2/6);
\draw (1+0,1/2)--(1+0,5/6);
\draw (1+1/2,1/2)--(1+1/2,5/6);
}}
\;,
\quad\text{to denote}\quad \zeta^{-ij}x\otimes_+ y\;.
\ee
Moreover, we define $(x,i)^*:=(x^*,-i)$ and $\Theta((x,i)):=(\Theta(x),-i)$.
We can draw multiple diagrams on the same vertical level by the following proposition.

\begin{proposition}
For $x,y,z$ in $\mathscr{S}_{\bullet}$, we have
\be
((x,i)\otimes_{t}(y,j))\otimes_{t} (z,k)=(x,i)\otimes_{t}((y,j)\otimes_{t}(z,k)) \;.
\ee
\end{proposition}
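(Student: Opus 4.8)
The plan is to verify the identity by direct computation, expanding both sides using Definition~\ref{Def:twist tensor general} and reducing to the associativity of the ordinary graded tensor product $\otimes_+$ together with a matching of the scalar $\zeta$-powers.

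First I would expand the left-hand side. By definition, $(x,i)\otimes_{t}(y,j)=(\zeta^{-ij}\,x\otimes_+ y,\, i+j)$, and applying the definition again to pair this with $(z,k)$ gives first component $\zeta^{-(i+j)k}\,(\zeta^{-ij}\,x\otimes_+ y)\otimes_+ z = \zeta^{-(ij+ik+jk)}\,(x\otimes_+ y)\otimes_+ z$ and second component $i+j+k$. Expanding the right-hand side symmetrically, $(y,j)\otimes_{t}(z,k)=(\zeta^{-jk}\,y\otimes_+ z,\, j+k)$, and pairing $(x,i)$ with it yields first component $\zeta^{-i(j+k)}\,x\otimes_+(\zeta^{-jk}\,y\otimes_+ z)=\zeta^{-(ij+ik+jk)}\,x\otimes_+(y\otimes_+ z)$ and second component $i+j+k$.

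The two scalar prefactors are manifestly equal, both being $\zeta^{-(ij+ik+jk)}$, and the two lifted grading indices agree, both being $i+j+k$. What remains is to identify the underlying unscaled products $(x\otimes_+ y)\otimes_+ z$ and $x\otimes_+(y\otimes_+ z)$. This is the associativity of the graded tensor product $\otimes_+$, which holds because $\otimes_+$ is realized by placing the labelled boxes side by side on a common output disc; the two bracketings produce isotopic regular planar tangles, so RT isotopy invariance (requirement (i)) forces the equality of their partition functions.

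I expect the only point requiring care --- rather than a genuine obstacle --- to be the bookkeeping of the $\zeta$-exponents, together with the observation that, because we work throughout with the lifted $\Z$-grading and with the $\otimes_+$ form of the interpolation consistently, the computation is unambiguous even in the even-$N$ case where the two interpolations of Definition~\ref{Def:twist tensor general} otherwise differ. Since both sides are assembled from the same consistent choice, no sign ambiguity arises and the identity follows.
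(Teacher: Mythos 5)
Your proof is correct and matches the paper's: the whole content of the argument is the exponent identity $ij+(i+j)k=i(j+k)+jk$, which is precisely what your bookkeeping of the $\zeta$-powers establishes, and which is the single fact the paper's own one-line proof cites. Your additional remarks --- the planar-isotopy justification for associativity of $\otimes_+$ and the consistency of the lifted $\Z$-grading in the even-$N$ case --- are points the paper leaves implicit, so they only make the same argument more explicit.
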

\begin{proof}
It follows from the fact that
$ij+(i+j)k=i(j+k)+jk$.
\end{proof}

\begin{proposition}
For $x,y$ in $\mathscr{S}_{m,\pm}$, we have
\begin{align}
\Theta(\Theta((x,i)) \otimes_{t} (y,j))&=\Theta((y,j)) \otimes_{t} (x,i)\;,\\
(\Theta((x,i)) \otimes_{t} (y,j))^*&=\Theta((x,i)^*) \otimes_{t} (y,j)^*.
\end{align}

\end{proposition}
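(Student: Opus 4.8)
The plan is to reduce both identities to bookkeeping with the two defining forms of $\otimes_{t}$ in Definition~\ref{Def:twist tensor general}, the anti-linear involution $\Theta$ together with its order-reversing rule $\Theta(a\otimes_+ b)=\Theta(b)\otimes_- \Theta(a)$ and $\Theta^2=\mathrm{id}$, and the behaviour of the adjoint under the graded tensor product and under $\rho_{\pi}$. Throughout I would keep the integer lifts $i,j$ explicit, since for even $N$ the scalar $\zeta^{-ij}$ depends on the lift and not merely on the residue mod $N$; this is exactly what forces the passage to $\Z$.

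For the first identity, which uses only $\Theta$ and para isotopy, I would unfold the definitions. Since $\Theta((x,i))=(\Theta(x),-i)$, the first form of the twisted product gives $\Theta((x,i))\otimes_t (y,j)=(\zeta^{ij}\,\Theta(x)\otimes_+ y,\; j-i)$. Applying $\Theta$, using anti-linearity to pull out $\overline{\zeta^{ij}}=\zeta^{-ij}$, then the order-reversing rule and $\Theta^2=\mathrm{id}$, yields $\Theta(\Theta((x,i))\otimes_t (y,j))=(\zeta^{-ij}\,\Theta(y)\otimes_- x,\; i-j)$. On the other side, the $\otimes_-$ form of the twisted product gives $\Theta((y,j))\otimes_t (x,i)=(\Theta(y),-j)\otimes_t (x,i)=(\zeta^{-ij}\,\Theta(y)\otimes_- x,\; i-j)$, matching. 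The crux is that the equality of the two forms of $\otimes_t$ (equivalently para isotopy) is precisely what absorbs the discrepancy between $\otimes_+$ and $\otimes_-$ and between $\zeta^{ij}$ and $\zeta^{-ij}$.

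For the second identity I would first establish the cleaner auxiliary fact that the adjoint distributes over the twisted product, $(A\otimes_t B)^* = A^*\otimes_t B^*$ for lifted homogeneous $A,B$. This rests on one geometric input: because the vertical reflection preserves left--right order but exchanges the relative heights of the two $\$$ signs, one has $(u\otimes_+ v)^* = u^*\otimes_- v^*$ with no extra scalar, which is just $Z_{T^*}(x^*)=Z_T(x)^*$ applied to the tangle realizing $\otimes_+$. Converting $\otimes_-$ back to $\otimes_+$ by para isotopy and matching $\zeta$-powers against the second form of $\otimes_t$ then gives the distributivity. Next I would record that $\Theta$ and $*$ commute on lifted elements, $(\Theta(A))^*=\Theta(A^*)$; unwinding $\Theta(x)=\zeta^{-|x|^2}\rho_{\pi}(x^*)$ this reduces to $(\Theta(x))^*=\Theta(x^*)$, i.e.\ to the statement that the vertical reflection conjugates the $\pi$-rotation to the $(-\pi)$-rotation, $(\rho_{\pi}w)^*=\rho_{-\pi}(w^*)$, after which the $2\pi$-rotation eigenvalue $\chi(g,g)$ cancels the $\zeta$-powers (using $|x^*|^2=|x|^2$). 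Combining, $(\Theta((x,i))\otimes_t(y,j))^*=(\Theta((x,i)))^*\otimes_t (y,j)^*=\Theta((x,i)^*)\otimes_t (y,j)^*$, which is the claim.

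The main obstacle is not the algebra of the $\zeta$-powers, which is routine once the lifts are tracked, but pinning down precisely the two geometric inputs for the adjoint: the rule $(u\otimes_+ v)^*=u^*\otimes_- v^*$ (that the vertical reflection flips $+\leftrightarrow-$ while preserving order) and the commutation $*\,\rho_{\pi}=\rho_{-\pi}\,*$. Both must be read off directly from the tangle pictures together with $Z_{T^*}(x^*)=Z_T(x)^*$, with careful attention to the handedness of the rotation and to the even-$N$ sign ambiguity in the choice of $\zeta$, so that no spurious scalar survives in either identity.
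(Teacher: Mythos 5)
Your proof is correct, and it takes essentially the same route as the paper, whose proof is simply the observation that both identities follow by unwinding the definitions of $\otimes_t$, $\Theta$, and $*$. Your two auxiliary facts --- that vertical reflection flips $\otimes_+\leftrightarrow\otimes_-$ without a scalar, and that $*$ commutes with $\Theta$ because $*\,\rho_\pi=\rho_{-\pi}\,*$ and the $2\pi$-rotation eigenvalue $q^{|x|^2}$ cancels the $\zeta$-powers --- are exactly the inputs implicit in the paper's one-line argument, just made explicit with the integer lifts tracked carefully.
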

\begin{proof}
They follow from the definitions of $\otimes_t,\Theta,*$.
\iffalse
Since $\Theta$ is a horizontal reflection, we have that
\begin{align*}
\Theta(\Theta(x) \otimes_{t} y)&=\Theta(\zeta^{|x|_{+} |y|_{+}}\Theta(x) \otimes_{+} y)\\
&=\zeta^{-|x|_{+} |y|_{+}} \Theta(y) \otimes_{-} x\\
&=\Theta(y) \otimes_{t} x.
\end{align*}

\begin{align*}
\Theta(\Theta(x) \otimes_{t} y)&=\Theta(\zeta^{|x|_{+} |y|_{+}}\Theta(x) \otimes_{+} y)\\
&=\zeta^{-|x|_{+} |y|_{+}} \Theta(y) \otimes_{-} x\\
&=\Theta(y) \otimes_{t} x.
\end{align*}

Since $*$ is a vertical reflection, we have that
\begin{align*}
(\Theta(x) \otimes_{t} y)^*&=(\zeta^{|x|_{+} |y|_{+}} \Theta(x) \otimes_{t} y)^*\\
&=\zeta^{-|x|_{+} |y|_{+}} \Theta(x^*) \otimes_{-} y^*\\
&=\Theta(x^*) \otimes_{t} y^*\;.
\end{align*}
\fi
\end{proof}

\begin{notation}
  In the parafermion planar para algebras (PAPPA) (see \S \ref{positivity}), if $x$ is a homogenous 1-box, then $(x,i)$ is determined by $i$. Thus we simply use $i$ to denote $(x,i)$.
\end{notation}

\subsection{Subfactor planar para algebras}
The $m$-box space of a planar para *-algebra has an inner product $tr(x^*y)$ for $m$-boxes $x$ and $y$.

\begin{definition}
A subfactor planar para algebra $\mathscr{P}_{\bullet}$ will be a spherical planar para *-algebra with $\dim \mathscr{P}_{m,\pm}<\infty$ for all $m$, and such that the inner product is positive.
\end{definition}

\begin{remark}
We call it a subfactor planar para algebra, because a subfactor planar para algebra is the graded standard invariant of a $G$ graded subfactor.
The general theory will be discussed in a coming paper.
Motivated by the deep work of Popa \cite{Pop90,Pop94}, we conjecture that strongly amenable graded hyperfinite subfactors of type II$_1$ are classified by subfactor planar para algebras.
\end{remark}

When $\chi=1$, the subfactor planar para algebra $\mathscr{P}_{\bullet}$ is a ($G$ graded) subfactor planar algebra.
The zero graded part of a subfactor planar para algebra is a subfactor planar algebra.

Many notions of subfactor planar algebras are inherited for subfactor planar para algebras, such as the Jones projections, the basic construction, principal graphs, depths. We refer the readers to \cite{Jon83,JonPA} for the planar algebra case.

\begin{definition}
A subfactor planar para algebra $\mathscr{P}_{\bullet}$ is called irreducible, if $\dim \mathscr{P}_{1,\pm,0}=1$.
\end{definition}

\subsection{Examples}\label{construction}
Skein theory is a presentation theory for planar algebras in terms of generators and (algebraic and topological) relations. One can study the skein theory for planar para algebras in a similar way.
We refer the reader to \cite{JonPA} for the skein theory of planar algebras (in Section 1) and many interesting examples (in Section 2). Also see \cite{BMPS,LiuYB} for the skein-theoretic construction of the extended Haagerup planar algera and a new family of planar algebras.

Let us construct a spherical unshaded planar para algebra with the para symmetry $(\mathbb{Z}_N,\chi)$. We take the same bicharacter that we considered in \S\ref{sect:Reflections}, namely $\chi(j,k)=q^{jk}$, where $q=e^{\frac{2\pi i}{N}}$ and choose  $\zeta$ to be a square root of $q$ given in \eqref{ZetaChoice}, such that $\zeta^{N^2}=1$.
This planar para algebra plays the role of the Temperley-Lieb-Jones planar algebra among planar para algebras with para symmetry $(\mathbb{Z}_N,\chi)$.

Let $\mathscr{P}_{\bullet}$ be the unshaded planar algebra over the field $\mathbb{C}(\delta)$ generated by a 1-box $c$, graded by $1$, and satisfying the following relations:
\begin{itemize}
\item[(1)]$\graa{cN}=\graa{id}$,

\item[(2)]$\graa{circk}=0$, for $1\leq k\leq N-1$,

\item[(3)]$\graa{Fc}=\zeta \graa{c}$, \quad\quad\quad\quad\quad\quad \text{namely Fourier-parafermion relation,}
\end{itemize}
where $\delta$ is the circle parameter and $\graa{ck}$ denotes a through string with $k$ labels $c$.

\begin{figure}[h]
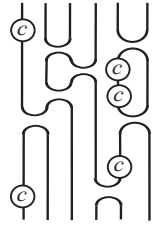

$$\grd{6RT}$$
\caption{A regular planar 6-tangle labelled by $c$}
\end{figure}

Precisely, the vectors in $\mathscr{P}_{m}$ are linear sums of regular planar $m$-tangles labelled by $c$ modulo the relations.
The para isotopy can also be viewed as relations:
$$\graa{i+j}=q^{ij}~\graa{i-j}.$$
Regular planar tangles act on labelled regular planar tangles by gluing the diagrams.

The planar para algebra is called {\it evaluable} by the relations, if $\dim(\mathscr{P}_{0})\leq 1$, i.e., any regular labelled planar $0$-tangles is reduced to the ground field; and $\dim(\mathscr{P}_{m})< \infty$.

The relations are called {\it consistent}, if $\dim(\mathscr{P}_{0})=1$, i.e., different processes of evaluating a regular labelled planar $0$-tangle give the same value in the ground field.
In this case, the map from regular labelled planar $0$-tangles to the ground field is called the {\it partition function}, denoted by $Z$.

\begin{theorem}\label{Theorem: construction}
The above relations of the generator $c$ are consistent and the unshaded planar para algebra $\mathscr{P}_{\bullet}$ is evaluable and spherical over the field $\mathbb{C}(\delta)$.
\end{theorem}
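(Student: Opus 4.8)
The plan is to establish the three assertions in the order: finite-dimensionality of each $\mathscr{P}_m$, evaluability of closed diagrams, then consistency (from which sphericity follows cheaply). First I would produce a finite spanning set for each $\mathscr{P}_{m}$. A labelled regular $m$-tangle is, up to the relations, a Temperley--Lieb diagram on $2m$ boundary points whose strings carry some number of copies of the $1$-box $c$ in series. Relation (1) reduces the number of $c$'s on any one string modulo $N$, so every string may be taken to carry $c^{k}$ with $0\le k\le N-1$. Since the number of planar matchings of $2m$ points is the Catalan number $C_m$ and there are at most $N^{m}$ ways to decorate the $m$ strings, this yields $\dim\mathscr{P}_{m}\le C_m N^{m}<\infty$. (The overcount is harmless: a finite upper bound is all that evaluability requires.)

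Evaluability of closed diagrams is then nearly immediate. A labelled regular $0$-tangle is a disjoint union of closed loops, each passing through some number $k$ of copies of $c$. A loop with $k\not\equiv 0 \pmod N$ vanishes by relation (2), while a loop with $k\equiv0\pmod N$ reduces by relation (1) to an undecorated circle and contributes a factor $\delta$. Evaluating loops from the innermost outward shows every closed diagram equals $\delta^{r}$ for $r$ the number of trivially-labelled loops, or $0$; in particular $\dim\mathscr{P}_{0}\le1$.

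The heart of the matter is consistency, that is, $\dim\mathscr{P}_{0}=1$, equivalently that the evaluation above is well defined. The loop evaluation manifestly respects relations (1), (2), isotopy, and para isotopy (the reorderings produced by $\chi$ only permute equally-graded labels along a loop). What remains is compatibility with the Fourier--parafermion relation (3): applying $\mathfrak{F}_{\text{s}}$ to a $c$-box inside a closed diagram and dividing by $\zeta$ must not change the value. I would verify this by presenting (1)--(3) together with para isotopy as a terminating rewriting system and checking local confluence of all critical pairs; Newman's lemma then gives unique normal forms, so the partition function $Z$ is well defined with $Z(\varnothing)=1$, forcing $\dim\mathscr{P}_0=1$. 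Alternatively, and perhaps more transparently, one may exhibit a concrete model --- the parafermion algebra with its vacuum trace of \cite{JafPed,JafJan2016} --- in which $c$ is realized so that (1)--(3) hold, whence the abstract evaluation must agree with the model's trace and is therefore consistent. I expect this compatibility of (3) with the loop evaluation to be the main obstacle: it is exactly where the fine-tuning $\zeta^{2}=q$ and $\zeta^{N^{2}}=1$ enters, these being the conditions that make the Gaussian sum $\omega$ of Proposition~\ref{gauss sum} have modulus one and close up consistently around a loop.

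Finally, sphericity follows with little extra work. Consistency gives $\dim\mathscr{P}_{0,\pm}=1$, and both $\mathscr{P}_{0,\pm}$ are identified with $\mathbb{C}(\delta)$. For the left--right identity it suffices to treat the spanning $1$-boxes $c^{k}$ (the $1$-box space is spanned by these, as the only planar matching of two points is a through string): closing $c^{k}$ on the left and on the right each produces a single circle decorated by $k$ copies of $c$, which by relations (1) and (2) evaluates to the same scalar ($\delta$ if $k\equiv0\pmod N$, and $0$ otherwise). By linearity the spherical identity holds for every $1$-box, completing the proof.
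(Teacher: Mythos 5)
Your skeleton (finite spanning set, loop-by-loop evaluability, sphericity from the $1$-box space being spanned by the $c^{k}$) is fine, and you correctly locate the crux in consistency, i.e.\ well-definedness of the evaluation. But that central step is not actually proved in your proposal: you offer two routes and execute neither. The rewriting-system route is only named --- no termination order is given, and the critical pairs (which here involve diagrammatic moves modulo isotopy, not just algebraic overlaps) are never enumerated or resolved. The concrete-model route is circular: to conclude that ``the abstract evaluation must agree with the model's trace'' you need a well-defined assignment of values to labelled $0$-tangles that is invariant under all tangle moves, and constructing such an invariant assignment is exactly the consistency statement being proved. Moreover, your claim that the loop evaluation ``manifestly respects \ldots para isotopy'' is false as stated, and it is precisely where the paper does its real work: sliding labels around a closed loop and moving the base point picks up scalars ($q^{\pm 1}$ from each para isotopy, $q^{k_{i}}$ from clockwise rotations), and one must check that the total scalar accumulated around a full loop is $1$. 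The paper does this by building the universal planar para algebra, defining the partition function inductively with an average over innermost circles, $Z(T)=\frac{1}{|IC|}\sum_{L\in IC}Z(T,L)$, and verifying the key case: when para isotopy exchanges $c_{0}$ and $c_{1}$ so that the top label changes, moving $c_{0}$ around the loop yields $Nk-1$ factors of $q$ from para isotopy and one factor $q$ from the $2\pi$ rotation, whose product is $q^{Nk}=1$. Without some version of this computation you have no proof.

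You also misplace the role of relation (3). You treat the Fourier--parafermion relation as one more rewrite rule inside the loop evaluation and call it ``the main obstacle,'' but the paper never runs it through the consistency argument at all: it first constructs the \emph{shaded} $(\mathbb{Z}_{N},\chi)$ planar para algebra using only relations (1), (2) together with para isotopy and the $2\pi$ rotation, proves consistency there, and only afterwards imposes $\mathfrak{F}_{\text{s}}(c)=\zeta c$ to lift the shading, which is legitimate because $\zeta\mathfrak{F}_{\text{s}}(c)$ satisfies the same relations as $c$ (here $\zeta^{N^{2}}=1$ enters). Separating (3) in this way is what keeps the hard part of the argument finite and checkable; folding it into a confluence analysis, as you propose, would force you to handle rotation-twisted overlaps that the paper's two-stage structure avoids.
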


\begin{proof}
  See Appendix A.
\end{proof}

When $\delta$ is a real number, we introduce the vertical reflection on $\mathscr{P}_{\bullet}$ mapping $c$ to $c^{-1}(=c^{N-1})$. Note that the involution preserves the relations of $c$, thus it is well-defined on the planar para algebra $\mathscr{P}_{\bullet}$. So $\mathscr{P}_{\bullet}$ is a planar para *-algebra over $\mathbb{C}$.
We will prove that the partition function $Z$ is positive semi-definite with respect to * in Sections \ref{positivity} and \ref{positivity2} and construct subfactor planar para algebras by taking a proper quotient.

Note that the 1-box space of a $(G,\chi)$ planar para algebra forms a finite dimensional $G$ graded algebra with a $G$ graded trace. (Here a $G$ graded trace means that the trace of any non-zero graded vector is zero.)
On the other hand, given an Abelian group $G$, a bicharacter $\chi$ of $G$, and any finite dimensional $G$ graded algebra $A$ with a $G$ graded trace $\tau$, we can construct a shaded $(G,\chi)$ planar para algebra $\mathscr{P}(A)$ with the circle parameter $\delta$ over the field $\mathbb{C}(\delta)$. The generators of $\mathscr{P}(A)$ are 1-boxes \graa{x1}, for all $x\in A$. The relations are given by
\begin{itemize}
  \item[(1)] $\graa{xy1}=\graa{xy2}$;
  \item[(2)] $\graa{xtrace1}=\tau(x)$ and $\graa{xtrace2}=\tau(x)$,
\end{itemize}
for any $x, y \in A$.

\begin{theorem}
The above relations are consistent and the shaded $(G,\chi)$ planar para algebra $\mathscr{P}(A)$ is evaluable and spherical over the field $\mathbb{C}(\delta)$.
\end{theorem}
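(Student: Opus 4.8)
The plan is to prove the three assertions---evaluability, consistency, and sphericality---by giving an explicit semantic (state-sum) description of the partition function and showing that relations (1) and (2), together with the planar para structure, reduce every labelled tangle to a normal form whose value is independent of all the choices made along the way. The argument runs parallel to, but is more transparent than, the normal-form analysis behind Theorem~\ref{Theorem: construction}, because here the algebra $A$ and its trace $\tau$ supply the evaluation rule directly.

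First I would fix the normal form. By relation (1), any chain of $1$-boxes lying on a single string fuses to one $1$-box labelled by the corresponding product in $A$; by relation (2) together with the circle parameter $\delta$, a closed string carrying a single label $x$ may be deleted and replaced by the scalar $\tau(x)$, while an empty closed string contributes $\delta$. An induction on the number of strings and input discs then shows that $\mathscr{P}(A)_{m,\pm}$ is spanned by the finitely many shaded (Temperley--Lieb) connection patterns on $2m$ points, each of whose arcs is decorated by a single element of $A$. Since $A$ is finite dimensional and the number of loop-free patterns is finite, $\dim\mathscr{P}(A)_{m,\pm}<\infty$; taking $m=0$ shows that every labelled $0$-tangle reduces to a scalar, so $\dim\mathscr{P}(A)_{0,\pm}\le 1$, which is evaluability.

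The heart of the matter is consistency: the scalar attached to a closed $0$-tangle must not depend on the order of the reductions. I would define the value semantically---fuse the labels around each loop into a single product in $A$, evaluate that loop by $\tau$ (each empty loop by $\delta$), and multiply over loops---and then verify invariance under RT isotopy, naturality, and the grading, para-isotopy and rotation axioms. Cyclic invariance along a single loop is immediate from the trace property $\tau(ab)=\tau(ba)$, so the base point and the order in which relation (1) is applied are immaterial. The only remaining ambiguities are the para-isotopy phases $\chi(g,h)$ relating $\otimes_+$ and $\otimes_-$ and the $2\pi$ rotation phase $\chi(g,g)$. This is exactly where the hypothesis that $\tau$ is a $G$-graded trace is decisive: since $\tau$ annihilates every component $A_g$ with $g\neq 0$, only configurations whose total grade around each loop vanishes can contribute, and on such configurations every accumulated bicharacter factor collapses, by bilinearity of $\chi$, to a factor $\chi(\cdot\,,0)=1$ or $\chi(0,0)=1$. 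I would make this bookkeeping rigorous using the lifted $\Z$-grading and the associativity of the twisted tensor product of Definition~\ref{Def:twist tensor general} (established above), which is precisely the device rendering the relative heights of the $\$$ marks unambiguous. Invariance then shows that all reduction paths agree, giving $\dim\mathscr{P}(A)_{0,\pm}=1$.

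Sphericality finally follows from the same description: consistency yields $\dim\mathscr{P}(A)_{0,\pm}=1$, the two forms of relation (2) both evaluate a closed $1$-box to $\tau(x)$, and the equality of the left and right closures of a general $m$-box reduces to the cyclicity of $\tau$, exactly as in the proof that the Markov trace is a trace. I expect the main obstacle to be the consistency step---verifying that the phases produced by para isotopy and by the $2\pi$ rotation assemble into a single well-defined scalar---and the mechanism that resolves it is the interplay between the $G$-graded vanishing of $\tau$, the bicharacter identities, and the associativity of $\otimes_t$, which together force all surviving contributions to be grade $0$ on each loop and thereby trivialize every phase.
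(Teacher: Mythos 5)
Your skeleton (normal form, finite dimensionality, consistency as the crux, sphericality from the two closures in relation (2) and cyclicity of $\tau$) follows the same outline as the paper, whose proof of this theorem simply defers to the argument for Theorem~\ref{Theorem: construction} in Appendix A. However, your consistency step has a genuine gap: the claim that on configurations all of whose loops have total grade $0$ ``every accumulated bicharacter factor collapses to $\chi(\cdot,0)=1$'' is false. Para-isotopy phases are attached to exchanges of the vertical order of \emph{individual} labels, including pairs of labels lying on \emph{different} loops, and such a phase does not cancel merely because each loop is zero-graded in total. Concretely, take $G=\mathbb{Z}_2$ with $\chi(1,1)=-1$, let $A$ be generated by $c$ with $c^2=1$, $\tau(1)=1$, $\tau(c)=0$, and consider two disjoint circles, each carrying two labels $c$. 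The tangle in which the two circles' labels occupy disjoint height intervals and the tangle in which they are interleaved differ, by axiom (iv), by the factor $\chi(1,1)=-1$; yet both tangles have all loops of total grade $0$, and your height-blind state sum assigns both the same nonzero value $\tau(c\cdot c)^2=1$. So the proposed functional is not equivariant under para isotopy, hence does not descend to the universal planar para algebra, and two reduction orders can differ by a sign that your bookkeeping never records. This cross-loop phase tracking is exactly the nontrivial content of the paper's proof: there the quantity $Z(T,L)$ carries a phase $q^L$ collecting \emph{all} para-isotopy exchanges performed while fusing the labels of the circle $L$ --- including exchanges with labels on other circles --- together with rotation phases, and well-definedness is then proved by induction using the average over innermost circles. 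What is true, and what the paper's informal discussion actually asserts, is the weaker statement that transporting one label of grade $g$ once around \emph{its own} loop of total grade $G_L$ produces the phase $\chi(g,G_L)$, which equals $1$ when $G_L=0$; this gives base-point independence of the fusion along a single loop, but it does not make the value independent of how distinct loops are interleaved, so it cannot by itself close the consistency argument.

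A secondary problem: the device you invoke to make the bookkeeping rigorous, the twisted tensor product of Definition~\ref{Def:twist tensor general}, is defined only for $G=\mathbb{Z}_N$ with $\chi(j,k)=q^{jk}$ and a chosen square root $\zeta$ of $q$ satisfying $\zeta^{N^2}=1$. The theorem you are proving concerns an arbitrary finite abelian group $G$ with an arbitrary bicharacter $\chi$, and no square root of $\chi$ need exist: already for $G=\mathbb{Z}_2$, $\chi(1,1)=-1$, any bicharacter $\psi$ satisfies $\psi(1,1)^2=\psi(1,0)=1\neq -1$. To prove the statement in its stated generality you must either carry out the paper's inductive construction of the partition function with explicit phase factors, or first justify a lift of the grading and of $\chi$ to a free abelian group before placing labels on a common level; the zero-graded collapse alone will not do it.
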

\begin{proof}
  The proof is similar to that of Theorem \ref{Theorem: construction}.
\end{proof}

In the case of parafermion planar para algebras, the $\mathbb{Z}_N$ graded algebra $A$ is given by the $N$ dimensional algebra generated by $c$ and $c^N=1$.
The $G$ graded trace is given by $\tau(c^k)=0$, for $1\leq k\leq N-1$, and $tr(1)=1$.

\setcounter{equation}{0}
\section{Parafermion Planar Para Algebras\label{positivity}}
In this section, we take $\delta=\sqrt{N}$ and study the planar para algebra $\mathscr{P}_{\bullet}$ over the field $\mathbb{C}$.
Recall that $\mathscr{P}_{\bullet}$ is a planar para *-algebra with the vertical reflection * defined as an extension of $c^*=c^{-1}$.

The kernel of the partition function $\ker(Z)=\bigcup_{m,\pm}\{x\in \mathscr{P}_{m,\pm} |Z(tr(xy))=0, \forall y\in \mathscr{P}_{m,\pm}\}$ is {\it an ideal} of $\mathscr{U}(P)$, in the sense that any fully labelled regular planar tangle with a label in $\ker(Z)$ is in $\ker(Z)$.
Thus action of regular planar tangles is well defined on the quotient $\mathscr{P}/\ker(Z)$.

We prove that the following relation holds in $\mathscr{P}/\ker(Z)$:
\begin{align}\label{e=sumgi}
\graa{e}=\frac{1}{\sqrt{N}}\sum_{i=0}^{N-1} \graa{i=-i}.
\end{align}
Recall that the twisted tensor product $\graa{i=-i}$ is defined as
$$\graa{i=-i}=\zeta^{-i^2}\graa{i+-i}=\zeta^{i^2}\graa{i--i}\;.$$
By relation \ref{e=sumgi}, any fully labelled regular planar tangle is a linear sum of labelled regular planar tangles with only labelled vertical strings.
The algebra generated by labelled vertical strings is a parafermion algebra, see Section \ref{parafermion algebra} for the definition of parafermion algebras.
Therefore we call the planar para algebra $\mathscr{P}/\ker(Z)$ the parafermion planar para algebra (PAPPA), 
% \underline{parafermion planar para algebra}, 
denoted by $PF_{\bullet}$.
We prove that $PF_{\bullet}$ is a subfactor planar para algebra. We use this $C^*$ positivity condition to prove reflection positivity in Section \ref{sect:RP}.
We give some interesting properties of the parafermion planar para algebra in Section \ref{Pauli} and \ref{Pictorial presentation}.
Further applications in quantum information of these topological isotopy and braided relations in Section \ref{braided relations} are discussed in \cite{JafLiuWoz}.

\begin{notation}
Take $$v_i^j=\frac{1}{\delta}\graa{pij}\;.$$
\end{notation}
\noindent Then it is easy to check that $v_i^jv_k^l=\delta_{j,k}v_i^l$, and $(v_i^j)^*=v_j^i$.
In particular, $\{v_i^i\}$ are pairwise orthogonal idempotents.

\begin{lemma}\label{kernel}
The vector $I_2-\sum_{i\in \mathbb{Z}_N} v_i^i$ is in the kernel of the partition function of $\mathscr{P}_{\bullet}$.
\end{lemma}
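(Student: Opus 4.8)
The plan is to use that $\ker(Z)$ is precisely the radical of the bilinear trace form $\langle x,y\rangle=Z(tr(xy))$ on the $2$-box space, so it suffices to prove $Z\!\left(tr\!\left((I_2-\sum_i v_i^i)\,y\right)\right)=0$ for every $y$ in a spanning set of $\mathscr{P}_{2,\pm}$. Since $\mathscr{P}_\bullet$ is generated by the single $1$-box $c$ and is evaluable (Theorem~\ref{Theorem: construction}), every $2$-box is a linear combination of decorated Temperley--Lieb diagrams, and because regular tangles contain no crossings these are exactly the twisted through-strings $(c^k,k)\otimes_t(c^l,l)=\zeta^{-kl}c^k\otimes_+c^l$ and the labelled cup--caps $p_{kl}$ (with $v_k^l=\tfrac1\delta p_{kl}$). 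Thus I only need to test the two families $y=p_{kl}$ and $y=(c^k,k)\otimes_t(c^l,l)$, and to compare against the baseline $tr(I_2\,(c^k,k)\otimes_t(c^l,l))=tr((c^k,k)\otimes_t(c^l,l))=\delta_{k,0}\delta_{l,0}$, obtained by closing the two strands into circles and invoking the circle relation (a circle carrying a nonzero power of $c$ vanishes, a bare circle contributes $\delta$).

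The cup--cap family requires no diagrammatics. Writing $p=\sum_i v_i^i$, the matrix-unit relations $v_i^i v_k^l=\delta_{i,k}v_k^l$ give $p\,v_k^l=v_k^l$; since $I_2$ is the multiplicative identity of the $2$-box algebra we also have $I_2 v_k^l=v_k^l$, whence $(I_2-p)\,p_{kl}=\delta\,(I_2-p)v_k^l=0$ as an element. In particular $tr((I_2-p)p_{kl})=0$. Conceptually, $p$ is the unit of the matrix subalgebra $\{v_i^j\}$, so $I_2-p$ annihilates every cup--cap on the nose.

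The real work is the through-string family. Here I would stack $v_i^i$ on top of $(c^k,k)\otimes_t(c^l,l)$: the bottom cup of $p_{ii}$ (carrying grade $-i$) absorbs the two through-strands, producing a single labelled cup--cap $\propto v_i^{\,i-k-l}$, so that taking the trace collapses the diagram to one circle. Using $tr(v_a^b)=\tfrac1N\delta_{a,b}$ and the circle relation, the closure evaluates to $\tfrac1N\,\delta_{k+l,0}$ times a phase $\phi(i,k,l)$ accumulated from the para-isotopy $x\otimes_+y=\chi(|x|,|y|)x\otimes_-y$ needed to slide the grade-$k$ and grade-$l$ strands past the grade-$(-i)$ cup, together with the $\zeta$-normalization of $v_i^i$. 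The key point is that on the support $l=-k$ this phase is the nontrivial character $\phi(i,k,-k)=q^{ik}$ of $i$, so that summing over $i\in\mathbb{Z}_N$ gives the Gauss-type collapse $\sum_i q^{ik}=N\delta_{k,0}$. Hence $\sum_i tr\!\left(v_i^i\,(c^k,k)\otimes_t(c^l,l)\right)=\delta_{k,0}\delta_{l,0}$, matching the baseline, and the difference vanishes on this family as well.

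The main obstacle is exactly this phase bookkeeping: one must track all $\zeta$- and $q$-factors and verify that the residual $i$-dependence is a \emph{primitive} character, so that $\sum_i q^{ik}$ collapses to $N\delta_{k,0}$ rather than producing extra solutions; getting the exponent right (for instance, $q^{ik}$ and not $q^{2ik}$, which would fail for even $N$) is the crux, and it is where the specific choice of $\zeta$ in \eqref{ZetaChoice} enters. Conceptually the Lemma says that modulo $\ker(Z)$ the $2$-box algebra is $M_N(\mathbb{C})$, with the unit $I_2$ of the group-like through-string part identified with the unit $\sum_i v_i^i$ of the matrix-unit part. As a cross-check, applying the string Fourier transform $\mathfrak{F}_{\text{s}}$ (the one-click rotation that interchanges through-strings and cup--caps) carries the identity $I_2=\sum_i v_i^i$ to the resolution $e=\tfrac1{\sqrt N}\sum_i (c^i,i)\otimes_t(c^{-i},-i)$ of~\eqref{e=sumgi}, so the two relations are Fourier duals and proving either one yields the other.
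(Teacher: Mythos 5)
Your proposal is correct and follows essentially the same route as the paper's proof: both test $I_2-\sum_{i\in\mathbb{Z}_N} v_i^i$ against the same spanning set of the $2$-box space (labelled cup--caps and labelled through-strings) and verify that all trace pairings vanish, which is precisely membership in $\ker(Z)$ as the paper defines it. The only real difference is that the paper dismisses the verification as ``easy to check'' (adding a sphericality remark to pass from trace pairings to arbitrary $0$-tangles), whereas you carry it out explicitly; your two key claims---the matrix-unit annihilation $(I_2-\sum_i v_i^i)\,v_k^l=0$ for the cup--cap family, and the fact that the residual $i$-dependence on the through-string family is the primitive character $q^{\pm ik}$, so that $\sum_i q^{\pm ik}=N\delta_{k,0}$ matches the baseline $\delta_{k,0}\delta_{l,0}$---are exactly right.
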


\begin{proof}
The 2-box space has a generating set
	\[
	\left\{\graa{pij}\;,\qquad \graa{i++j}\right\}_{0\leq i,j\leq N-1}\;.
	\]
Take $x=id-\sum_{i\in \mathbb{Z}_N} v_i^i \in \ker(Z)$. It is easy to check that $tr(xy)=0$ for any 2-box $y$.
By the spherical property, we have that any 0-tangle labelled by $x$ is isotopic to $tr(xy)$ for some 2-box $y$. So $x$ is in the kernel of the partition function.
\end{proof}

Thus we have the relation $I_2=\sum_{g\in G} v_g$ in the quotient $(\mathscr{P}/\ker{Z})_{\bullet}$, i.e.,
\begin{align}\label{i2=sumpi}
\graa{I2}=\frac{1}{\sqrt{N}} \sum_{i=0}^{N-1} \graa{pi}
\end{align}
Take the string Fourier transform $\mathfrak{F}_{\text{s}}$ on both sides, i.e., the $\frac{\pi}{2}$ rotation. We obtain Equation~\ref{e=sumgi}.

\begin{lemma}\label{basis}
The vectors $c^{i_1}\otimes_+c^{i_2}\cdots\otimes_+c^{i_m}$, i.e.,
	\[
	\graa{12m}\;,
	\quad\text{for}\quad
	0\leq n_1, n_2 \cdots n_m\leq N-1\;,
	\]
form an orthonormal basis of $(\mathscr{P}/\ker{Z})_{m}$.
\end{lemma}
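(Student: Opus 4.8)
The plan is to establish the two halves of the claim separately: that the $N^m$ vectors $c^{n_1}\otimes_+\cdots\otimes_+ c^{n_m}$ with $0\le n_j\le N-1$ span $(\mathscr{P}/\ker Z)_m$, and that they are orthonormal for the inner product $tr(x^*y)$. Orthonormality immediately gives linear independence (a vanishing linear combination, paired against each basis vector, forces every coefficient to vanish), so the two halves together yield the basis statement. No appeal to positivity of the inner product is made, since that is exactly what is being built up in this section.

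For spanning I would invoke relation \eqref{e=sumgi}, which rewrites the cap--cup element as a linear combination of twisted tensor products of labelled through-strings. Feeding this into an arbitrary regular $m$-tangle labelled by $c$ lets one remove the local maxima and minima one at a time; inducting on the number of such extrema (equivalently, on the Temperley--Lieb complexity of the underlying unlabelled tangle) reduces every $m$-box to a linear combination of diagrams all of whose strings are vertical through-strings, exactly as already noted after \eqref{e=sumgi}. Since a planar, non-crossing system of $m$ through-strings joining the $m$ top to the $m$ bottom marked points must be the identity pattern, each such diagram has the form $c^{k_1}\otimes_+\cdots\otimes_+ c^{k_m}$; finally the relation $c^N=1$ lets me reduce every exponent into $\{0,\dots,N-1\}$. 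This exhibits the asserted vectors as a spanning set.

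For orthonormality I would compute $tr\big((c^{n_1}\otimes_+\cdots\otimes_+ c^{n_m})^*\,(c^{n_1'}\otimes_+\cdots\otimes_+ c^{n_m'})\big)$ directly. Using $c^*=c^{-1}$ column by column, the product collapses on the $j$th strand to a single through-string carrying $c^{\,n_j'-n_j}$, and taking the Markov trace closes each column into an independent loop. By the circle relation (relation~(2) for the generator $c$) a loop carrying a nonzero power of $c$ evaluates to $0$, while a trivial loop contributes the circle value $\delta$; with the normalization $\delta=\sqrt N$ this gives $tr(x^*y)=\prod_{j=1}^m \delta_{n_j,n_j'}$, so the Gram matrix is the identity. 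Spanning together with this identity Gram matrix proves the vectors form an orthonormal basis.

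The main obstacle is the bookkeeping of phase factors: the adjoint acting on $\otimes_+$, together with the para-isotopy moves needed to juxtapose $x^*$ with $y$ and to close up the diagram, each threaten to produce powers of $q$ or $\zeta$. I would argue that off the diagonal these are harmless, since at least one column already closes into a vanishing loop and kills the whole term regardless of phase; and that on the diagonal $n_j'=n_j$ every column reduces to a bare identity string, so that $x^*x$ equals $I_m$ with no residual scalar. Verifying that last point is the delicate step, because it is precisely where the $\$$-height conventions and the exact definition of $\otimes_+$ must be used, and where one cannot shortcut the phase computation by invoking positivity of the inner product.
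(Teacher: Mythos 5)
Your proposal is correct and follows essentially the same route as the paper: spanning is obtained exactly as in the paper by reducing an arbitrary $m$-box to labelled Temperley--Lieb diagrams and then eliminating caps and cups via relation \eqref{e=sumgi}, and orthonormality is the paper's ``easy to check'' Markov-trace computation, which you spell out (off-diagonal terms die by the circle relation, diagonal terms give $1$ after the $\delta^{-m}$ normalization). The delicate phase point you flag does work out cleanly: since $(x\otimes_+y)^*=x^*\otimes_-y^*$, the labels in $x^*x$ are nested in height like matched parentheses, so the cancellations $c^{-n_j}c^{n_j}=1$ occur between height-adjacent labels on the same strand and $x^*x=I_m$ holds with no residual scalar.
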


\begin{proof}
Any $m$-box is a linear sum of labelled Temperley-Lieb diagrams. Applying the relation \ref{e=sumgi}, any labelled Temperley-Lieb diagram is a linear sum of the vectors $c^{n_1}\otimes_+c^{n_2}\cdots\otimes_+c^{n_m}$, $0\leq n_1, n_2 \cdots n_m\leq N-1$. Thus these vectors form a generating set of $\mathscr{P}/\ker{Z}_{\bullet}$.
It is easy to check that these vectors form an orthonormal basis with respect to the Markov trace.
%Replace the Jones projections by the string Fourier transform of the relation $id=\sum_{g\in G} v_g$, we have that any $m$-box is a multiplication of shifts of $g$. Applying the para symmetric relation and $c^N=1$, we have that, The vectors $c^{n_1}\otimes_+c^{n_2}\cdots\otimes_+c^{n_m}$, $0\leq n_1, n_2 \cdots n_m\leq N-1$ for a generating set.
\end{proof}

\begin{theorem}
When $\delta=\sqrt{N}$, the kernel of the partition function $\ker{Z}$ is generated by
$id-\sum_{i\in \mathbb{Z}_N} v_i^i,$
and $\mathscr{P}/\ker{Z}$ is a subfactor planar para algebra.
\end{theorem}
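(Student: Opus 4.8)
The plan is to prove the two assertions in turn: first that $\ker Z$ equals the ideal $\mathcal{I}$ generated by $\mathrm{id}-\sum_{i\in\mathbb{Z}_N}v_i^i$, and then that the quotient $\mathscr{P}/\ker Z$ satisfies the four defining conditions of a subfactor planar para algebra, namely sphericity, the $*$-structure, finite-dimensionality, and positivity of the inner product. One inclusion of the first assertion is immediate: Lemma \ref{kernel} gives $\mathrm{id}-\sum_i v_i^i\in\ker Z$, and since $\ker Z$ is an ideal (stable under the action of regular planar tangles), the whole ideal $\mathcal{I}$ it generates lies inside $\ker Z$. The content is the reverse inclusion $\ker Z\subseteq\mathcal{I}$, which I would obtain by showing that the trace form is already nondegenerate on $\mathscr{P}/\mathcal{I}$, forcing $\ker Z/\mathcal{I}=0$.

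To that end I would work in $\mathscr{P}/\mathcal{I}$. Here the single generating relation \eqref{i2=sumpi} holds by construction, and applying the string Fourier transform (a tangle action, under which $\mathcal{I}$ is stable) yields relation \eqref{e=sumgi} in $\mathscr{P}/\mathcal{I}$ as well. Using \eqref{e=sumgi} to resolve every cup--cap in a labelled Temperley--Lieb diagram --- exactly the reduction in Lemma \ref{basis}, whose argument uses only this relation and hence is valid already in $\mathscr{P}/\mathcal{I}$ --- shows that the through-string vectors $c^{n_1}\otimes_+\cdots\otimes_+c^{n_m}$ span each $(\mathscr{P}/\mathcal{I})_m$. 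Their Gram matrix is computed directly in $\mathscr{P}$ from the defining relations of $c$: since $(c^{n})^*=c^{-n}$, closing up a single through-string gives $\mathrm{tr}(c^{m-n})$, which by relations (1) and (2) equals $\delta_{n,m}$, and the $m$-string inner product factorizes over the strings to $\prod_i\delta_{n_i,m_i}$. Thus the spanning set is orthonormal, and orthonormality forces linear independence irrespective of any a priori degeneracy, so these vectors form a basis with identity Gram matrix. The trace form on $\mathscr{P}/\mathcal{I}$ is therefore positive definite, in particular nondegenerate; its radical is precisely the image of $\ker Z$, so that image vanishes and $\ker Z=\mathcal{I}$.

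With $\ker Z=\mathcal{I}$ established, $\mathscr{P}/\ker Z$ inherits sphericity from Theorem \ref{Theorem: construction}, consistency keeping $\dim(\mathscr{P}/\ker Z)_{0,\pm}=1$; the vertical reflection $*$ descends because $\ker Z$ is $*$-invariant --- if $Z(\mathrm{tr}(xy))=0$ for all $y$, then using $Z(\mathrm{tr}(a^*))=\overline{Z(\mathrm{tr}(a))}$ together with the trace property one gets $Z(\mathrm{tr}(x^*y))=0$ for all $y$ as well; Lemma \ref{basis} supplies a finite orthonormal basis, so every $(\mathscr{P}/\ker Z)_{m,\pm}$ is finite-dimensional; and that same orthonormal basis makes the sesquilinear form $\mathrm{tr}(x^*y)$ positive definite. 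These are exactly the requirements in the definition of a subfactor planar para algebra, completing the proof.

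I expect the genuine obstacle to be the spanning/reduction step --- verifying that relation \eqref{e=sumgi}, which is itself only a consequence of the single generator of $\mathcal{I}$, really does eliminate every cup and cap so that the through-string vectors span the whole of $(\mathscr{P}/\mathcal{I})_m$. Once that is secured, the orthonormality computation and the passage from a positive-definite Gram matrix to the vanishing of the radical (hence $\ker Z=\mathcal{I}$) are routine, and the four defining conditions follow formally.
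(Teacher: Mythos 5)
Your proposal is correct and takes essentially the same route as the paper: the paper's proof is literally the remark that the theorem follows from Lemmas \ref{kernel} and \ref{basis}, and your argument is the natural expansion of that one-liner — Lemma \ref{kernel} puts the planar ideal $\mathcal{I}$ generated by $I_2-\sum_{i\in\mathbb{Z}_N}v_i^i$ inside $\ker Z$, the spanning argument of Lemma \ref{basis} (valid already in $\mathscr{P}/\mathcal{I}$, since relation \eqref{e=sumgi} is the SFT of the generator) combined with orthonormality of the through-string vectors makes the trace form on $\mathscr{P}/\mathcal{I}$ positive definite, forcing $\ker Z=\mathcal{I}$, and the same orthonormal basis yields finite dimensionality and positivity of the inner product in the quotient. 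Your bookkeeping of which quotient each step lives in, and the $*$-invariance of $\ker Z$, are exactly the details the paper leaves implicit.
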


\begin{proof}
The proof is a consequence of Lemmas \ref{kernel} and \ref{basis}.
\end{proof}

\setcounter{equation}{0}
\section{Parafermion Pauli Matrices}\label{Pauli}
We define unitary $N\times N$ matrices $X,Y,Z$ that play the role in the parafermion algebra of the $2\times2$ Pauli matrices $\sigma_{x}, \sigma_{y}, \sigma_{z}$ for fermions. We call the matrices $X,Y,Z$ the  \textit{parafermion Pauli matrices}.   They act on an $N$-dimensional Hilbert space with basis vectors indexed by $\mathbb{Z}_{N}$. In \S\ref{sec:PauliDefinition} we define these matrices and determine some of their properties.

%In  \S\ref{sec:quarternion-parafermion} we discuss the relation between the parafermion Pauli matrices and algebra.
In \S\ref{QuadraticPauliRep} we give different ways to represent $X,Y,Z$ as quadratic functions of parafermions, acting on a larger space.  Each matrix $X,Y,Z$ is defined as a particle-anti particle pair, namely a twisted product of one parafermion and the inverse of another.
Restricted to a subspace $\gamma=1$, with $\gamma$ defined in \eqref{gamma-grading}, we obtain a representation of the matrices $X,Y,Z$.

\subsection{Parafermion Pauli matrices: Version $q$\label{sec:PauliDefinition-I}}
Let us use Dirac notation for vectors, and take the ortho-normal basis for an $N$-dimensional Hilbert space: $\{\,\ket{k} \, |\, k\in \mathbb{Z}_{N}\}$. Choose $q=e^{\frac{2\pi i}{N}}$ and its square root $\zeta$ such that $\zeta^{N^2}=1$, as in \eqref{ZetaChoice}.  Define the Pauli matrices $X,Y,Z$ by their action on the basis,
	\be
		X\ket{k} = \ket{k+1}\;, \quad
		Y\ket{k} = \zeta^{1-2k} \ket{k-1}\;,\quad\text{and}\quad
		Z\ket{k} = q^{k} \ket{k}\;.
	\ee
Clearly $X,Y,Z$ are unitary.
For any $N\in \mathbb{N}$, these matrices satisfy a first set of parafermion Pauli matrix relations,
	\be\label{PauliRelations1q}
	X^N=Y^N=Z^N=1\;,\quad
	XY=q\,YX\;,\quad
	YZ=q\,ZY\;, \quad\text{and}\quad
	ZX=q\,XZ\;.
	\ee
They also satisfy a second set of parafermion Pauli matrix relations  that involve $\zeta$,
	\be\label{PauliRelations2q}
	 XYZ = YZX = ZXY =\zeta \;.
	\ee
	
In case $N=2$, the choices
	\be
	\zeta=i\;,\quad
	\ket{0} = \bv{1}{0}\;,\quad\text{and}\quad
	\ket{1} = \bv{0}{1}\;,
	\ee
yield the standard representation of the Pauli matrices; the choice $\zeta=-i$ yields the complex conjugate of the usual representation.
	
\subsubsection{Quaternion relations: Version $q$\label{sec:quarternion-parafermion}}
One has a parafermion quaternion algebra related to the matrices $X,Y,Z$.  These are $N\times N$ matrices $\boldsymbol{i}$, $\boldsymbol{j}$, $\boldsymbol{k}$ satisfy the \textit{parafermion quaternion relations}:
	\be\label{PFQR}
	\boldsymbol{i}^{N}=\boldsymbol{j}^{N}=\boldsymbol{k}^{N}
	= -1\;,
	\quad
	\boldsymbol{i}\boldsymbol{j}
	= q^{-1}\, \boldsymbol{j}\boldsymbol{i}\;,\quad
	\boldsymbol{j}\boldsymbol{k}
	= q^{-1}\, \boldsymbol{k}\boldsymbol{j}\;,\quad
	\boldsymbol{k}\boldsymbol{i}
	= q^{-1}\, \boldsymbol{i}\boldsymbol{k}\;,\quad
	\text{and}\quad
	\boldsymbol{i}\,\boldsymbol{j}\,\boldsymbol{k}=-1\;.
	\ee
This algebra arises from the three unitary transformations $\boldsymbol{i}$, $\boldsymbol{j}$, $\boldsymbol{k}$ by
	\be
	\boldsymbol{i}=-\zeta Y\;,\quad
	\boldsymbol{j}=-\zeta X\;,\quad
	\boldsymbol{k}=-\zeta^{-1} Z\;.
	\ee
The desired relations \eqref{PFQR} are a consequence of $(-\zeta^{\pm1})^{N}=-1$.  The matrices $\boldsymbol{i}$, $\boldsymbol{j}$, $\boldsymbol{k}$ generate the algebra of $N\times N$ matrices.	
	
\subsection{Parafermion Pauli matrices: Version $q^{-1}$ \label{sec:PauliDefinition}}
As in version I, take the ortho-normal basis for an $N$-dimensional Hilbert space: $\{\,\ket{k} \, |\, k\in \mathbb{Z}_{N}\}$. Choose $q=e^{\frac{2\pi i}{N}}$ and its square root $\zeta$ such that $\zeta^{N^2}=1$, as in \eqref{ZetaChoice}.  Define the Pauli matrices $X,Y,Z$ by their action on the basis,
	\be
		X\ket{k} = \ket{k-1}\;, \quad
		Y\ket{k} = \zeta^{-2k-1} \ket{k+1}\;,\quad\text{and}\quad
		Z\ket{k} = q^{k} \ket{k}\;.
	\ee
Clearly $X,Y,Z$ are unitary.
For any $N\in \mathbb{N}$, these matrices satisfy a first set of parafermion Pauli matrix relations,
	\be\label{PauliRelations1q-}
	X^N=Y^N=Z^N=1\;,\quad
	XY=q^{-1}\,YX\;,\quad
	YZ=q^{-1}\,ZY\;, \quad\text{and}\quad
	ZX=q^{-1}\,XZ\;.
	\ee
They also satisfy a second set of parafermion Pauli matrix relations  that involve $\zeta$,
	\be\label{PauliRelations2q-}
	 XYZ = YZX = ZXY =\zeta^{-1} \;.
	\ee
	
 In case $N=2$, the choices
	\be
	\zeta=-i\;,\quad
	\ket{0} = \bv{1}{0}\;,\quad\text{and}\quad
	\ket{1} = \bv{0}{1}\;,
	\ee
yield the standard representation of the Pauli matrices; the choice $\zeta=i$ yields the complex conjugate of the usual representation.

\subsubsection{Quaternion relations: Version $q^{-1}$\label{sec:quarternion-parafermion}}
One has a parafermion quaternion algebra related to the matrices $X,Y,Z$.  These are $N\times N$ matrices $\boldsymbol{i}$, $\boldsymbol{j}$, $\boldsymbol{k}$ satisfy the \textit{parafermion quaternion relations}:
	\be\label{PFQR}
	\boldsymbol{i}^{N}=\boldsymbol{j}^{N}=\boldsymbol{k}^{N}
	= -1\;,
	\quad
	\boldsymbol{i}\boldsymbol{j}
	= q\, \boldsymbol{j}\boldsymbol{i}\;,\quad
	\boldsymbol{j}\boldsymbol{k}
	= q\, \boldsymbol{k}\boldsymbol{j}\;,\quad
	\boldsymbol{k}\boldsymbol{i}
	= q\, \boldsymbol{i}\boldsymbol{k}\;,\quad
	\text{and}\quad
	\boldsymbol{i}\,\boldsymbol{j}\,\boldsymbol{k}=-1\;.
	\ee
This algebra arises from the three unitary transformations $\boldsymbol{i}$, $\boldsymbol{j}$, $\boldsymbol{k}$ by
	\be
	\boldsymbol{i}=-\zeta^{-1} Y\;,\quad
	\boldsymbol{j}=-\zeta^{-1} X\;,\quad
	\boldsymbol{k}=-\zeta Z\;.
	\ee
As in version $q$, we use $(-\zeta^{\pm1})^{N}=-1$.  Again, the matrices $\boldsymbol{i}$, $\boldsymbol{j}$, $\boldsymbol{k}$ generate the algebra of $N\times N$ matrices.

\subsection{Quadratic representations by parafermions\label{QuadraticPauliRep}}
In this subsection we introduce the representation of $X,Y,Z$ by quadratic expressions in four parafermion operators. However these operators may satisfy different relations with respect to $q$, an $N^{\rm th}$ root of unity and $\zeta=q^{1/2}$.  We introduce a common grading transformation $\gamma$ that is shared among the different subcases discussed in the following sub subsections.

Let $c_{1}, c_{2}, c_{3}, c_{4}$ denote four parafermion operators that satisfy the relations
	\be
		c_{i}c_{j} = q\, c_{j}c_{i}\;,
		\quad\text{for}\quad i<j\;,
		\quad\text{and}\quad
		c_{i}^{N}=1\;,
		\quad\text{where}\quad
		q=e^{\frac{2\pi i}{N}}\;.
	\ee
Let
	\be\label{zeta-defn}
		\zeta=q^{1/2}\;,\quad\text{with}\quad\zeta^{N^{2}}=1\;.
	\ee
Define the grading operator as
	\be\label{gamma-grading}
	\gamma =q c_{1}c_{2}^{-1}c_{3}c_{4}^{-1}=(qc_{1}^{-1}c_{2}c_{3}^{-1}c_{4})^{-1}\;.
	\ee

\subsubsection{\bf Model $(q,1)$}\label{sec:XYZ-PModel-II-Equations}
This model comes from $X,Y,Z$ sharing $c_1$, and taking
	\be
	\widehat{X} = \zeta\, c_{1}c_{4}^{-1} \;,\quad
	\widehat{Y} = \zeta\, c_{1}^{-1}c_{3} \;,\quad
	\widehat{Z} = \zeta\, c_{1}c_{4}^{-1} \;.
	\ee
These matrices have the property that they satisfy the first set of parafermion Pauli relations given in \eqref{PauliRelations1q} for $X,Y,Z$, namely
	\be
		\widehat{X}^{N}
		= \widehat{Y}^{N}
		= \widehat{Z}^{N}
		=1\;,\quad
		\widehat{X}\widehat{Y}
		=q\,\widehat{Y}\widehat{X}\;,\quad
		\widehat{Y}\widehat{Z}
		=q\,\widehat{Z}\widehat{Y}\;,\quad
		\widehat{Z}\widehat{X}
		=q\,\widehat{X}\widehat{Z}\;.
	\ee
In this case one also finds that
	\be
	\widehat{X}\widehat{Y}\widehat{Z} =
	\widehat{Y}\widehat{Z}\widehat{X} =
	\widehat{Z}\widehat{X}\widehat{Y} = \zeta \gamma\;,
	\ee
with $\gamma$ given in \eqref{gamma-grading}.  This shows that $\gamma$ commutes with $\widehat{X}$, $\widehat{Y}$, and $\widehat{Z}$.	
Thus one achieves the desired Pauli relation $\widehat{X}\widehat{Y}\widehat{Z} =\zeta$ representing \eqref{PauliRelations2q} on the subspace for which the unitary $\gamma=1$.

\subsubsection{\bf Model $(q^{-1},4)$}\label{sec:XYZ-PModel-I-Equations-2}
In this model $q^{-1}$ replaces $q$, and the matrices share $c_{4}$.  Define
	\be
	\widehat{X} = \zeta\, c_{1}^{-1}c_{4} \;,\quad
	\widehat{Y} = \zeta\, c_{2}c_{4}^{-1} \;,\quad
	\widehat{Z} = \zeta\, c_{3}^{-1}c_{4} \;.
	\ee
These matrices have the property that they satisfy the first set of parafermion Pauli relations given in \eqref{PauliRelations1q-} for $X,Y,Z$, namely
	\be
		\widehat{X}^{N}
		= \widehat{Y}^{N}
		= \widehat{Z}^{N}
		=1\;,\quad
		\widehat{X}\widehat{Y}
		=q^{-1}\,\widehat{Y}\widehat{X}\;,\quad
		\widehat{Y}\widehat{Z}
		=q^{-1}\,\widehat{Z}\widehat{Y}\;,\quad
		\widehat{Z}\widehat{X}
		=q^{-1}\,\widehat{X}\widehat{Z}\;.
	\ee
Furthermore the product $\widehat{X}\widehat{Y}\widehat{Z}$ has the form
	\be
	\widehat{X}\widehat{Y}\widehat{Z} =
	\widehat{Y}\widehat{Z}\widehat{X} =
	\widehat{Z}\widehat{X}\widehat{Y} = \zeta^{-1} \gamma^{-1}\;.
	\ee
This relation shows that $\gamma$ commutes with $\widehat{X}$, $\widehat{Y}$, and $\widehat{Z}$.  So the product  $\widehat{X}\widehat{Y}\widehat{Z} =\zeta^{-1}$ gives the desired \eqref{PauliRelations2q-}, on the eigenspace for which the unitary operator $\gamma=1$.
%In the $N=2$ case, this is a well-known transformation in condensed matter physics.

\subsubsection{\bf Model $(q,4)$}\label{sec:XYZ-PModel-I-Equations-3}
Here we have $q$ in the parafermion relation and $X,Y,Z$ share $c_{4}$.  Define
	\be
	\widehat{X} = \zeta\, c_{3}^{-1}c_{4} \;,\quad
	\widehat{Y} = \zeta\, c_{2}c_{4}^{-1} \;,\quad
	\widehat{Z} = \zeta\, c_{1}^{-1}c_{4} \;.
	\ee
These operators satisfy
	\be
		\widehat{X}^{N}
		= \widehat{Y}^{N}
		= \widehat{Z}^{N}
		=1\;,\quad
		\widehat{X}\widehat{Y}
		=q\,\widehat{Y}\widehat{X}\;,\quad
		\widehat{Y}\widehat{Z}
		=q\,\widehat{Z}\widehat{Y}\;,\quad
		\widehat{Z}\widehat{X}
		=q\,\widehat{X}\widehat{Z}\;.
	\ee
Furthermore
	\be\label{XYZ-1}
	\widehat{X}\widehat{Y}\widehat{Z} =
	\widehat{Y}\widehat{Z}\widehat{X} =
	\widehat{Z}\widehat{X}\widehat{Y} = \zeta \gamma^{-1}\;,
	\ee
with the same $\gamma$ as in \eqref{gamma-grading}.  Again $\gamma$ commutes with $X,Y,Z$ and the relationship \eqref{XYZ-1} reduces to the desired $\widehat{X}\widehat{Y}\widehat{Z} =\zeta$ in \eqref{PauliRelations2q}  on the eigenspace $\gamma=1$.

%In the $N=2$ case, this is a well-known transformation in condensed matter physics.

\subsubsection{\bf Model $(q^{-1},1)$}\label{sec:XYZ-PModel-II-Equations-4}
This model has $q^{-1}$ and  $X,Y,Z$ share $c_{1}$. Take
	\be
	\widehat{X} = \zeta\, c_{1}^{-1}c_{2} \;,\quad
	\widehat{Y} = \zeta\, c_{1}c_{3}^{-1} \;,\quad
	\widehat{Z} = \zeta\, c_{1}^{-1}c_{4} \;.
	\ee
Then these operators represent \eqref{PauliRelations1q-}, namely
	\be
		\widehat{X}^{N}
		= \widehat{Y}^{N}
		= \widehat{Z}^{N}
		=1\;,\quad
		\widehat{X}\widehat{Y}
		=q^{-1}\,\widehat{Y}\widehat{X}\;,\quad
		\widehat{Y}\widehat{Z}
		=q^{-1}\,\widehat{Z}\widehat{Y}\;,\quad
		\widehat{Z}\widehat{X}
		=q^{-1}\,\widehat{X}\widehat{Z}\;.
	\ee
In this case one also finds that
	\be\label{XYZ-1}
	\widehat{X}\widehat{Y}\widehat{Z} =
	\widehat{Y}\widehat{Z}\widehat{X} =
	\widehat{Z}\widehat{X}\widehat{Y} = \zeta^{-1} \gamma^{-1}\;,
	\ee
with $\gamma$ as in \eqref{gamma-grading}.  Again $\gamma$ commutes with $X,Y,Z$.   So the relationship represents the desired $\widehat{X}\widehat{Y}\widehat{Z} =\zeta^{-1}$ in \eqref{PauliRelations2q-} on the eigenspace $\gamma=1$.

\setcounter{equation}{0}
\section{Pictorial Representations of Parafermion Pauli Matrices\label{sec:PauliDiagrams}}
Here we give several different diagrammatic representations for  the matrices $X, Y, Z$ introduced in \S\ref{Pauli}.  We give alternative representations as different ones can be helpful in different situations. We call these two string and four string models, as the transformations $X,Y,Z$ are given by diagrams with two strings or four strings respectively.  We number the subsections here to correspond as well as possible the numbering of subsections  in \S\ref{Pauli}.

In the two-string models, we represent  $X, Y, Z$ as $N\times N$ matrices.  In the four-string models, we represent $\widehat{X},\widehat{Y},\widehat{Z}$ as $N^{2}\times N^{2}$ matrices.  They are zero-graded, reflecting their definition as particle-anti particle products in \S\ref{Pauli}.  Hence they leave invariant subspaces of dimension $N$ which have fixed grading.  On the zero-graded subspace, the matrices $\widehat{X},\widehat{Y},\widehat{Z}$ represent the matrices $X,Y,Z$.

The diagrams give a simple interpretation to the matrices $\widehat{X},\widehat{Y},\widehat{Z}$ in the four-string models, and show how they leave the appropriate subspace invariant. Throughout this section take $\delta = \sqrt{N}$.

 We call these the different types of four string models the \textit{QI Model}  and the \textit{OA Model}.  They correspond naturally to the representations that arise from the zero-particle state (vectors as two  caps)  and the Markov trace (vectors as nested caps), mentioned in the introduction.  These representations are especially suitable for quantum information (QI) and operator algebras (OA), respectively.
The second model is a generalization of the well-known representation for Pauli matrices by Majoranas,  commonly used in condensed-matter physics.  The first model is different.  In both cases the Pauli matrices are products of parafermion particle operators with their anti-particle operators.

\subsection{The two-string model version $q$}\label{sec:Two-StringModel I}
In this model, we deal with the $X,Y,Z$ directly.  We represent the vector $\ket{k}$ by the cap diagram
	\be
	\ket{k}=N^{-\frac{1}{4}}
\raisebox{-.2cm}{
\tikz{
\fqudit{0}{0}{1/3}{1/3}{k}
}}
\;.
	\ee
The vertical reflection gives the adjoint, or dual vector $\bra{k}$, which we represent as the cup diagram
	\be
		\bra{k}=N^{-\frac{1}{4}}
\raisebox{-.2cm}{
\tikz{
\fmeasure{0}{0}{1/3}{1/3}{-k}
}} \;,
	\ee
so that $\lra{k,k'}= \braket{k}{k'} = \delta_{kk'}$.

The parafermion Pauli matrices $X$, $Y$ and $Z$ act on these vectors.  We represent them as
\begin{align}
X&=
\raisebox{-.4cm}{\tikz{
\draw (0,0)--(0,1);
\draw (2/3,0)--(2/3,1);
\node at (1/3,1/2) {\size{$1$}};
}}
\;,&
Y&=
\raisebox{-.4cm}{\tikz{
\draw (0,0)--(0,1);
\draw (2/3,0)--(2/3,1);
\node at (-1/3,1/2) {\size{$-1$}};
}}\;,&
Z&=
\raisebox{-.4cm}{\tikz{
\draw (0,0)--(0,1);
\draw (2/3,0)--(2/3,1);
\node at (1/3,1/2) {\size{$-1$}};
\node at (-1/3,1/2) {\size{$1$}};
}}\;.
\end{align}
In the diagram for $Z$, we place the labels on the same vertical level, using the notation for the twisted tensor product in \S\ref{Sect:TwistedTensorProduct}.
From the diagrams it is clear that
	\be
	XYZ=\zeta\;.
	\ee

If we represent the basis in the $n$-fold tensor product $\vec{\ket{k}}=\ket{k_1,k_2,\cdots, k_n}$ as
\be
\vec{\ket{k}}
=\frac{1}{d^{n/4}}\ \raisebox{-.5cm}{\tikz{
\fqudit{4/-3}{0\nn}{1/3}{3\nn}{\size{k_1}}
\fqudit{0/-3}{0\nn}{1/3}{2\nn}{\size{k_2}}
\fqudit{-6/-3}{0\nn}{1/3}{1\nn}{\size{k_n}}
\node at (-4/-3,1\nn) {$\cdots$};
}}\quad,
\ee
then we obtain the Jordan-Wigner transformation as:

\begin{align}
1 \otimes \cdots \otimes 1 \otimes X \otimes  1 \otimes \cdots \otimes 1
&= ~~~\raisebox{-.4cm}{
\tikz{
\draw (0-1/6,0)--(0-1/6,1);
\draw (-1/-3-1/6,0)--(-1/-3-1/6,1);
\node at (1/6-1/6,1/2) {\size{1}};
\draw (-2/-3,0)--(-2/-3,1);
\draw (-3/-3,0)--(-3/-3,1);
\draw (-6/-3,0)--(-6/-3,1);
\draw (-7/-3,0)--(-7/-3,1);
\node at (-4/-3,1/2) {$\cdots$};
\node at (-1.5/-3,1/2) {\size{-1}};
\node at (-2.5/-3,1/2) {\size{1}};
\node at (-5.5/-3,1/2) {\size{-1}};
\node at (-6.5/-3,1/2) {\size{1}};
\draw (-2/3,0)--(-2/3,1);
\draw (-3/3,0)--(-3/3,1);
\draw (-5/3,0)--(-5/3,1);
\draw (-6/3,0)--(-6/3,1);
\node at (-4/3,1/2) {$\cdots$};
}} \;,\\
1 \otimes \cdots \otimes 1 \otimes Y \otimes  1 \otimes \cdots \otimes 1
&= ~\raisebox{-.4cm}{
\tikz{
\draw (0-1/6,0)--(0-1/6,1);
\draw (-1/-3-1/6,0)--(-1/-3-1/6,1);
\node at (-1/6-1/6,1/2) {\size{-1}};
\draw (-2/-3,0)--(-2/-3,1);
\draw (-3/-3,0)--(-3/-3,1);
\draw (-6/-3,0)--(-6/-3,1);
\draw (-7/-3,0)--(-7/-3,1);
\node at (-4/-3,1/2) {$\cdots$};
\node at (-1.5/-3,1/2) {\size{1}};
\node at (-2.5/-3,1/2) {\size{-1}};
\node at (-5.5/-3,1/2) {\size{1}};
\node at (-6.5/-3,1/2) {\size{-1}};
\draw (-2/3,0)--(-2/3,1);
\draw (-3/3,0)--(-3/3,1);
\draw (-5/3,0)--(-5/3,1);
\draw (-6/3,0)--(-6/3,1);
\node at (-4/3,1/2) {$\cdots$};
}} \;,\\
1 \otimes \cdots \otimes 1 \otimes Z \otimes 1 \otimes \cdots \otimes 1
&= \; \raisebox{-.4cm}{
\tikz{
\draw (0-1/6,0)--(0-1/6,1);
\draw (-1/-3-1/6,0)--(-1/-3-1/6,1);
\node at (-1/6-1/6,1/2) {\size{1}};
\node at (0,1/2) {\size{-1}};
\draw (-2/-3,0)--(-2/-3,1);
\draw (-3/-3,0)--(-3/-3,1);
\draw (-6/-3,0)--(-6/-3,1);
\draw (-7/-3,0)--(-7/-3,1);
\node at (-4/-3,1/2) {$\cdots$};
\draw (-2/3,0)--(-2/3,1);
\draw (-3/3,0)--(-3/3,1);
\draw (-5/3,0)--(-5/3,1);
\draw (-6/3,0)--(-6/3,1);
\node at (-4/3,1/2) {$\cdots$};
}} \;.
\end{align}

If we represent the basis in the $n$-fold tensor product $\vec{\ket{k}}=\ket{k_1,k_2,\cdots, k_n}$ as
\be
\vec{\ket{k}}
=\frac{\size{1}}{d^{n/4}}\ \raisebox{-.5cm}{\tikz{
\fqudit{4/-3}{0\nn}{1/3}{1\nn}{k_1}
\fqudit{0/-3}{0\nn}{1/3}{2\nn}{k_2}
\fqudit{-6/-3}{0\nn}{1/3}{3\nn}{k_n}
\node at (-4/-3,1\nn) {$\cdots$};
}}\quad,
\ee
then we obtain the Jordan-Wigner transformation as:
\begin{align}
1 \otimes \cdots \otimes 1 \otimes X \otimes 1 \otimes \cdots \otimes 1
=& \raisebox{-.4cm}{
\tikz{
\draw (0-1/6,0)--(0-1/6,1);
\draw (1/3-1/6,0)--(1/3-1/6,1);
\node at (1/6-1/6,1/2) {\size{1}};
\draw (-2/3,0)--(-2/3,1);
\draw (-3/3,0)--(-3/3,1);
\draw (-6/3,0)--(-6/3,1);
\draw (-7/3,0)--(-7/3,1);
\node at (-5/3,1/2) {$\cdots$};
\node at (-5/6,1/2) {\size{-1}};
\node at (-7/6,1/2) {\size{1}};
\node at (-13/6,1/2) {\size{-1}};
\node at (-15/6,1/2) {\size{1}};
\draw (2/3,0)--(2/3,1);
\draw (3/3,0)--(3/3,1);
\draw (5/3,0)--(5/3,1);
\draw (6/3,0)--(6/3,1);
\node at (4/3,1/2) {$\cdots$};
}} \;, \\
1 \otimes \cdots \otimes 1 \otimes Y \otimes 1 \otimes \cdots \otimes 1
=& \raisebox{-.4cm}{
\tikz{
\draw (0-1/6,0)--(0-1/6,1);
\draw (1/3-1/6,0)--(1/3-1/6,1);
\node at (-1/6-1/6,1/2) {\size{-1}};
\draw (-2/3,0)--(-2/3,1);
\draw (-3/3,0)--(-3/3,1);
\draw (-6/3,0)--(-6/3,1);
\draw (-7/3,0)--(-7/3,1);
\node at (-5/3,1/2) {$\cdots$};
\node at (-5/6,1/2) {\size{1}};
\node at (-7/6,1/2) {\size{-1}};
\node at (-13/6,1/2) {\size{1}};
\node at (-15/6,1/2) {\size{-1}};
\draw (2/3,0)--(2/3,1);
\draw (3/3,0)--(3/3,1);
\draw (5/3,0)--(5/3,1);
\draw (6/3,0)--(6/3,1);
\node at (4/3,1/2) {$\cdots$};
}} \;,\\
1 \otimes \cdots \otimes 1 \otimes Z \otimes 1 \otimes \cdots \otimes 1
=&\quad  \raisebox{-.4cm}{
\tikz{
\draw (0-1/6,0)--(0-1/6,1);
\draw (1/3-1/6,0)--(1/3-1/6,1);
\node at (0,1/2) {\size{-1}};
\node at (-1/6-1/6,1/2) {\size{1}};
\draw (-2/3,0)--(-2/3,1);
\draw (-3/3,0)--(-3/3,1);
\draw (-6/3,0)--(-6/3,1);
\draw (-7/3,0)--(-7/3,1);
\node at (-5/3,1/2) {$\cdots$};
\draw (2/3,0)--(2/3,1);
\draw (3/3,0)--(3/3,1);
\draw (5/3,0)--(5/3,1);
\draw (6/3,0)--(6/3,1);
\node at (4/3,1/2) {$\cdots$};
}} \;.
\end{align}

\subsection{The two-string model version $q^{-1}$}\label{sec:Two-StringModel II}
In this model, we deal with the $X,Y,Z$ directly.  We represent the vector $\ket{k}$ by the cap diagram
	\be
	\ket{k}=N^{-\frac{1}{4}}
\raisebox{-.2cm}{
\tikz{
\fqudit{0}{0}{1/3}{1/3}{}
\node at (-1/6,1/3) {\size{$k$}};
}}
\;.
	\ee
The vertical reflection gives the adjoint, or dual vector $\bra{k}$, which we represent as the cup diagram
	\be
		\bra{k}=N^{-\frac{1}{4}}
\raisebox{-.2cm}{
\tikz{
\fmeasure{0}{0}{1/3}{1/3}{}
\node at (-1/3,-1/3) {\size{$-k$}};
}} \;,
	\ee
so that $\lra{k,k'}= \braket{k}{k'} = \delta_{kk'}$.

The parafermion Pauli matrices $X$, $Y$ and $Z$ act on these vectors.  We represent them as
\begin{align}
X&=
\raisebox{-.4cm}{\tikz{
\draw (0,0)--(0,1);
\draw (2/3,0)--(2/3,1);
\node at (-1/3,1/2) {\size{$-1$}};
}}
\;,&
Y&=
\raisebox{-.4cm}{\tikz{
\draw (0,0)--(0,1);
\draw (2/3,0)--(2/3,1);
\node at (1/3,1/2) {\size{$1$}};
}}\;,&
Z&=
\raisebox{-.4cm}{\tikz{
\draw (0,0)--(0,1);
\draw (2/3,0)--(2/3,1);
\node at (1/3,1/2) {\size{$-1$}};
\node at (-1/3,1/2) {\size{$1$}};
}}\;.
\end{align}
From the diagrams it is clear that
	\be
	XYZ=\zeta^{-1}\;.
	\ee

We have the Jordan-Wigner transformation similarly.

\subsection{Four-string models}
We have the diagram yielding the zero-particle state used in quantum information (QI),
\be\label{Zero-particle state}
	\ket{k}=N^{-\frac{1}{2}}
\raisebox{-.2cm}{
\tikz{
\fqudit{0}{0}{1/3}{1/3}{}
\fqudit{4/3}{0}{1/3}{1/3}{}
}}\;.
\ee
We also have the diagram yielding  the Markov tracial state used in operator algebra theory (OA),
\be
	\ket{k}=
	N^{-\frac{1}{2}}
\raisebox{-.5cm}{
\tikz{
\fdoublequdit{0}{0}{1/3}{1/3}{}{}
}}
\;.
\ee
Also we have the diagram for the grading operator
	\be\label{GradingDiagram}
	\gamma	=
\raisebox{-.4cm}{
\tikz{
\draw (-2/3,0)--(-2/3,1);
\draw (-3/3,0)--(-3/3,1);
\draw (-4/3,0)--(-4/3,1);
\draw (-5/3,0)--(-5/3,1);
\node at (-5/6,1/2) {\size{-1}};
\node at (-7/6,1/2) {\size{1}};
\node at (-9/6,1/2) {\size{-1}};
\node at (-11/6,1/2) {\size{1}};
}}
\;.
	\ee
We call $\gamma$ the \textit{grading operator}, since it detects the grading when acting on the underling vectors.
\be
\raisebox{-.4cm}{
\tikz{
\fqudit {-3/3}{1}{1/6}{0}{j}
\fqudit {-5/3}{1}{1/6}{1/3}{i}
\draw (-2/3,0)--(-2/3,1);
\draw (-3/3,0)--(-3/3,1);
\draw (-4/3,0)--(-4/3,1);
\draw (-5/3,0)--(-5/3,1);
\node at (-5/6,1/2) {\size{-1}};
\node at (-7/6,1/2) {\size{1}};
\node at (-9/6,1/2) {\size{-1}};
\node at (-11/6,1/2) {\size{1}};
}}
=q^{i+j}
\raisebox{-.4cm}{
\tikz{
\fqudit {-3/3}{1}{1/6}{0}{j}
\fqudit {-5/3}{1}{1/6}{1/3}{i}
\draw (-2/3,0)--(-2/3,1);
\draw (-3/3,0)--(-3/3,1);
\draw (-4/3,0)--(-4/3,1);
\draw (-5/3,0)--(-5/3,1);
}}
\;.
\ee
Hence the eigenspace $\gamma=1$ defines the zero-graded subspace.

\subsubsection{\bf The (QI,$q$) four string model
\label{Sect:QIq4string}}  This model corresponds to the $\widehat{X}$, $\widehat{Y}$ and $\widehat{Z}$ with $q$ and with the shared first string.  This corresponds to the algebraic model of  \S\ref{sec:XYZ-PModel-II-Equations} with the shared operator $c_{1}$.  Here we use the quantum information (QI) representation for vectors.

The basis states  $\ket{k}$ belong to the zero-graded part of the tensor product of two copies of the two-string model.
We represent the vector $\ket{k}$ and its adjoint by
	\be\label{Zero-GradedBasis}
	\ket{k}=N^{-\frac{1}{2}}
\raisebox{-.2cm}{
\tikz{
\fqudit{0}{0}{1/3}{1/3}{}
\fqudit{4/3}{0}{1/3}{1/3}{-k}
\node at (-1/6,1/3) {\size{$k$}};
}}
\;,
	\qquad\text{and}\qquad
	\bra{k} = N^{-\frac{1}{2}}
\raisebox{-.2cm}{
\tikz{
\fmeasure{0}{0}{1/3}{1/3}{}
\fmeasure{4/3}{0}{1/3}{1/3}{-k}
\node at (-1/6,-1/3) {\size{$k$}};
}}
\;.
	\ee	
Furthermore the parafermion Pauli matrices $\widehat{X}$, $\widehat{Y}$ and $\widehat{Z}$  are
\begin{align}
\widehat{X}&=
\raisebox{-.4cm}{
\tikz{
\draw (-2/3,0)--(-2/3,1);
\draw (-3/3,0)--(-3/3,1);
\draw (-4/3,0)--(-4/3,1);
\draw (-5/3,0)--(-5/3,1);
\node at (-5/6,1/2) {\size{-1}};
\node at (-11/6,1/2) {\size{1}};
}} \;,&
\widehat{Y}&=
\raisebox{-.4cm}{
\tikz{
\draw (-2/3,0)--(-2/3,1);
\draw (-3/3,0)--(-3/3,1);
\draw (-4/3,0)--(-4/3,1);
\draw (-5/3,0)--(-5/3,1);
\node at (-7/6,1/2) {\size{1}};
\node at (-11/6,1/2) {\size{-1}};
}} \;,&
\widehat{Z}&=
\raisebox{-.4cm}{
\tikz{
\draw (-2/3,0)--(-2/3,1);
\draw (-3/3,0)--(-3/3,1);
\draw (-4/3,0)--(-4/3,1);
\draw (-5/3,0)--(-5/3,1);
\node at (-9/6,1/2) {\size{-1}};
\node at (-11/6,1/2) {\size{1}};
}} \;.
\end{align}
Note that here the labels are at the same vertical level, so they correspond to the twisted tensor product of \S\ref{Sect:TwistedTensorProduct}.
Multiplying these representations, we see
	\[
\widehat{X}\widehat{Y}\widehat{Z}=\zeta\gamma\;,
	\]
with $\gamma$ in \eqref{GradingDiagram}.
Hence $\gamma=1$ on the zero-graded subspace on which the matrices $\widehat{X}$, $\widehat{Y}$, and $\widehat{Z}$ satisfy the correct algebraic relations.  The diagrams show that  $\widehat{X}$, $\widehat{Y}$, and $\widehat{Z}$ preserve the grading and that they commute with $\gamma$.

\subsubsection{\bf The (QI, $q^{-1}$) four-string model} This model corresponds to the parafermion representation of $\widehat{X}$, $\widehat{Y}$ and $\widehat{Z}$ given in \S\ref{sec:XYZ-PModel-I-Equations-2} with $q^{-1}$ and a shared fourth string.  Again we use the quantum information (QI) representation for vectors.

The vectors $\ket{k}$ belong to the zero-graded part of the tensor product of two copies of the two-string model.
We represent the vector $\ket{k}$ and its adjoint by
	\be\label{Zero-GradedBasis}
	\ket{k}=N^{-\frac{1}{2}}
\raisebox{-.2cm}{
\tikz{
\fqudit{0}{0}{1/3}{1/3}{}
\fqudit{4/3}{0}{1/3}{1/3}{-k}
\node at (-1/6,1/3) {\size{$k$}};
}}
\;,
	\qquad\text{and}\qquad
	\bra{k} = N^{-\frac{1}{2}}
\raisebox{-.2cm}{
\tikz{
\fmeasure{0}{0}{1/3}{1/3}{}
\fmeasure{4/3}{0}{1/3}{1/3}{-k}
\node at (-1/6,-1/3) {\size{$k$}};
}}
\;.
	\ee
The parafermion Pauli matrices $\widehat{X}$, $\widehat{Y}$ and $\widehat{Z}$
\begin{align}
\widehat{X}&=
\raisebox{-.4cm}{
\tikz{
\draw (-2/3,0)--(-2/3,1);
\draw (-3/3,0)--(-3/3,1);
\draw (-4/3,0)--(-4/3,1);
\draw (-5/3,0)--(-5/3,1);
\node at (-5/6,1/2) {\size{1}};
\node at (-11/6,1/2) {\size{-1}};
}} \;,&
\widehat{Y}&=
\raisebox{-.4cm}{
\tikz{
\draw (-2/3,0)--(-2/3,1);
\draw (-3/3,0)--(-3/3,1);
\draw (-4/3,0)--(-4/3,1);
\draw (-5/3,0)--(-5/3,1);
\node at (-5/6,1/2) {\size{-1}};
\node at (-9/6,1/2) {\size{1}};
}} \;,&
\widehat{Z}&=
\raisebox{-.4cm}{
\tikz{
\draw (-2/3,0)--(-2/3,1);
\draw (-3/3,0)--(-3/3,1);
\draw (-4/3,0)--(-4/3,1);
\draw (-5/3,0)--(-5/3,1);
\node at (-5/6,1/2) {\size{1}};
\node at (-7/6,1/2) {\size{-1}};
}} \;.
\end{align}
Also
	\[
	\gamma^{-1}=\zeta \widehat{X}\widehat{Y}\widehat{Z}
	=
\raisebox{-.4cm}{
\tikz{
\draw (-2/3,0)--(-2/3,1);
\draw (-3/3,0)--(-3/3,1);
\draw (-4/3,0)--(-4/3,1);
\draw (-5/3,0)--(-5/3,1);
\node at (-5/6,1/2) {\size{1}};
\node at (-7/6,1/2) {\size{-1}};
\node at (-9/6,1/2) {\size{1}};
\node at (-11/6,1/2) {\size{-1}};
}}
\;.
	\]
This acts on vectors as
\be
\raisebox{-.4cm}{
\tikz{
\fqudit {-3/3}{1}{1/6}{0}{j}
\fqudit {-5/3}{1}{1/6}{1/3}{i}
\draw (-2/3,0)--(-2/3,1);
\draw (-3/3,0)--(-3/3,1);
\draw (-4/3,0)--(-4/3,1);
\draw (-5/3,0)--(-5/3,1);
\node at (-5/6,1/2) {\size{1}};
\node at (-7/6,1/2) {\size{-1}};
\node at (-9/6,1/2) {\size{1}};
\node at (-11/6,1/2) {\size{-1}};
}}
=q^{-i-j}
\raisebox{-.4cm}{
\tikz{
\fqudit {-3/3}{1}{1/6}{0}{j}
\fqudit {-5/3}{1}{1/6}{1/3}{i}
\draw (-2/3,0)--(-2/3,1);
\draw (-3/3,0)--(-3/3,1);
\draw (-4/3,0)--(-4/3,1);
\draw (-5/3,0)--(-5/3,1);
}}
\;.
\ee

\subsubsection{\bf The (OA, $q$)  four-string model}
This model corresponds to the parafermion representation of $\widehat{X}$, $\widehat{Y}$ and $\widehat{Z}$ given in \S\ref{sec:XYZ-PModel-I-Equations-3} with $q$ and a shared fourth string.  Here we use the operator algebra (OA) representation for vectors.

We represent the vector $\ket{k}$ and its dual $\bra{k}$ by
	\be
	\ket{k}=
	N^{-\frac{1}{2}}
\raisebox{-.5cm}{
\tikz{
\fdoublequdit{0}{0}{1/3}{1/3}{}{}
\node at (-1/6,1/3) {\size{$k$}};
\node at (1/3,1/3) {\size{$-k$}};
}}
\;,
	\qquad\text{and}\qquad
	\bra{k}=
	N^{-\frac{1}{2}}
\raisebox{-.5cm}{
\tikz{
\fdoublemeasure{0}{0}{1/3}{1/3}{}{}
\node at (-1/3,-1/3) {\size{$-k$}};
\node at (1/3+1/6,-1/3) {\size{$k$}};
}}
\;.
	\ee
The Pauli matrices $X$, $Y$ and $Z$ are represented by
\begin{align}
\widehat{X}&=
\raisebox{-.4cm}{
\tikz{
\draw (-2/3,0)--(-2/3,1);
\draw (-3/3,0)--(-3/3,1);
\draw (-4/3,0)--(-4/3,1);
\draw (-5/3,0)--(-5/3,1);
\node at (-7/6,1/2) {\size{-1}};
\node at (-5/6,1/2) {\size{1}};
}} \;,&
\widehat{Y}&=
\raisebox{-.4cm}{
\tikz{
\draw (-2/3,0)--(-2/3,1);
\draw (-3/3,0)--(-3/3,1);
\draw (-4/3,0)--(-4/3,1);
\draw (-5/3,0)--(-5/3,1);
\node at (-5/6,1/2) {\size{-1}};
\node at (-9/6,1/2) {\size{1}};
}} \;,&
\widehat{Z}&=
\raisebox{-.4cm}{
\tikz{
\draw (-2/3,0)--(-2/3,1);
\draw (-3/3,0)--(-3/3,1);
\draw (-4/3,0)--(-4/3,1);
\draw (-5/3,0)--(-5/3,1);
\node at (-5/6,1/2) {\size{1}};
\node at (-11/6,1/2) {\size{-1}};
}} \;.
\end{align}

The grading operator that occurs is represented by
	\be
	\gamma^{-1}=\zeta^{-1} \widehat{X}\widehat{Y}\widehat{Z}
	=
\raisebox{-.4cm}{
\tikz{
\draw (-2/3,0)--(-2/3,1);
\draw (-3/3,0)--(-3/3,1);
\draw (-4/3,0)--(-4/3,1);
\draw (-5/3,0)--(-5/3,1);
\node at (-5/6,1/2) {\size{1}};
\node at (-7/6,1/2) {\size{-1}};
\node at (-9/6,1/2) {\size{1}};
\node at (-11/6,1/2) {\size{-1}};
}}
\;.
	\ee

\subsubsection{\bf The (OA,  $q^{-1}$) four-string model}
This model corresponds to the parafermion representation of $\widehat{X}$, $\widehat{Y}$ and $\widehat{Z}$ given in \S\ref{sec:XYZ-PModel-II-Equations-4} with $q^{-1}$ and a shared first string.  Here we use the operator algebra (OA) representation for vectors.

We represent the vector $\ket{k}$ and its dual $\bra{k}$ by
	\be
	\ket{k}=
	N^{-\frac{1}{2}}
\raisebox{-.5cm}{
\tikz{
\fdoublequdit{0}{0}{1/3}{1/3}{}{}
\node at (-1/6,1/3) {\size{$k$}};
\node at (1/3,1/3) {\size{$-k$}};
}}
\;,
	\qquad\text{and}\qquad
	\bra{k}=
	N^{-\frac{1}{2}}
\raisebox{-.5cm}{
\tikz{
\fdoublemeasure{0}{0}{1/3}{1/3}{}{}
\node at (-1/3,-1/3) {\size{$-k$}};
\node at (1/3+1/6,-1/3) {\size{$k$}};
}}
\;.
	\ee
The Pauli matrices $X$, $Y$ and $Z$ are presented by
\begin{align}
\widehat{X}&=
\raisebox{-.4cm}{
\tikz{
\draw (-2/3,0)--(-2/3,1);
\draw (-3/3,0)--(-3/3,1);
\draw (-4/3,0)--(-4/3,1);
\draw (-5/3,0)--(-5/3,1);
\node at (-11/6,1/2) {\size{-1}};
\node at (-9/6,1/2) {\size{1}};
}} \;,&
\widehat{Y}&=
\raisebox{-.4cm}{
\tikz{
\draw (-2/3,0)--(-2/3,1);
\draw (-3/3,0)--(-3/3,1);
\draw (-4/3,0)--(-4/3,1);
\draw (-5/3,0)--(-5/3,1);
\node at (-7/6,1/2) {\size{-1}};
\node at (-11/6,1/2) {\size{1}};
}} \;,&
\widehat{Z}&=
\raisebox{-.4cm}{
\tikz{
\draw (-2/3,0)--(-2/3,1);
\draw (-3/3,0)--(-3/3,1);
\draw (-4/3,0)--(-4/3,1);
\draw (-5/3,0)--(-5/3,1);
\node at (-5/6,1/2) {\size{1}};
\node at (-11/6,1/2) {\size{-1}};
}} \;.
\end{align}

The grading operator $\gamma$ is represented by
	\be
	\gamma^{-1}=\zeta \widehat{X}\widehat{Y}\widehat{Z}
	=
\raisebox{-.4cm}{
\tikz{
\draw (-2/3,0)--(-2/3,1);
\draw (-3/3,0)--(-3/3,1);
\draw (-4/3,0)--(-4/3,1);
\draw (-5/3,0)--(-5/3,1);
\node at (-5/6,1/2) {\size{1}};
\node at (-7/6,1/2) {\size{-1}};
\node at (-9/6,1/2) {\size{1}};
\node at (-11/6,1/2) {\size{-1}};
}}
\;.
	\ee

%{\color{red}
%In this case, the matrices $b_1$ and $b_2$ defined in Equations \eqref{b1} and \eqref{b2} are presented pictorially (up to a phase) as
%\be
%b_{1}=\graa{b1+}\;,\qquad\quad b_{2}= \graa{b2+}\;.
%\ee
%One can also derive braiding relations in the Type II model for the parafermion Pauli matrices, such as \eqref{conjugation-1}--\eqref{conjugation-2} in the Type I model, by using the braiding relations in \S\ref{braided relations}.
%}
\setcounter{equation}{0}
\section{A Pictorial Interpretation of Parafermion Algebras}\label{Pictorial presentation}
\subsection{Parafermion algebras}\label{parafermion algebra}
The parafermion algebra is defined by
generators: $c_i$, $i=1,2,\cdots$
and relations,
	\be
	c_i^N=1\;,\quad c_i \,c_j=q\,c_j\,c_i\;, \quad\text{for}\quad i<j\;, \quad\text{with}\quad q=e^{\frac{2\pi i}{N}}\;.
	\ee
Denote the parafermion algebra generated by $c_i$, $1\leq i\leq m$ as $PF_m$.  It has a basis
%$C_I=c_{i_1}^{n_1}c_{i_2}^{n_2}\cdots c_{i_k}^{n_k}$, for $i_1<i_2<\cdots<i_k$, $0\leq n_1, n_2, \cdots, n_k\leq N-1$.
$C_I=c_1^{i_1}c_2^{i_2}\cdots c_m^{i_m}$, for $0\leq i_1, i_2, \cdots, i_m\leq N-1$.
The expectation on $PF_m$ is defined as $tr(1)=1$, $tr(C_I)=0$, if $C_I\neq 1$. It is a tracial state.
The inclusion from $PF_m$ to $PF_{m+1}$ is trace preserving.

\begin{remark}
If we apply the Gelfand-Naimark-Segal construction to the inductive limit $\displaystyle \lim_{m \to\infty} PF_m$ with respect to the tracial state, then we obtain a hyperfinite factor $\mathcal{R}$ of type II$_1$. The Bernoulli shift $c_i \to c_{i+1}$ is an endomorphism $\rho$ of the factor $\mathcal{R}$.
The graded standard invariant of the corresponding subfactor $\mathcal{R}\supset\rho(\mathcal{R})$ is exactly the subfactor planar para algebra for parafermions with $\delta=\sqrt{N}$. \end{remark}

\subsection{Actions of planar tangles on parafermion algebras}
In planar para algebras, the labelled regular planar $m$-tangle \graa{12m} is presented by the vector $c_1^{i_1}c_2^{i_2}\cdots c_m^{i_m}$ in $PF_m$.

The Markov trace $\frac{1}{\delta^m}\grb{tr}$ is the expectation on $PF_m$.

The multiplication tangle gives the usual multiplication on $PF_m$:
$xy=\grb{xy}.$

The tangle
$\frac{1}{\delta}\graa{righti}$  is the trace preserving inclusion from $PF_m$ to $PF_{m+1}$.

The tangle
$\frac{1}{\delta}\graa{rightc}$  is the trace preserving conditional expectation from $PF_m$ to $PF_{m-1}$.

We also have the graded tensor products from $PF_m\hat{\otimes}PF_n$ to $PF_{m+n}$ given by
$$\grb{tensor+}.$$

\subsection{Temperley-Lieb subalgebras}
Take
	\[
	E_i
	=\frac{1}{\sqrt{N}} \sum_{k=0}^{N-1}
		q^{\frac{k^2}{2}} \,c_i^{k} \, c_{i+1}^{-k}
	=\frac{1}{\sqrt{N}} \sum_{k=0}^{N-1}
		q^{-\frac{k^2}{2}} \,c_{i+1}^{-k} \,c_i^{k}\;.
	\]
From Equation \eqref{e=sumgi}, we infer that  $E_i$
 is presented by \graa{Ei}.  The $E_{i}$ satisfy the following relations and generate a Temperley-Lieb subalgebra in the parafermion algebra:
\begin{itemize}
\item[(1)] $E_i=E_i^*=\frac{1}{\sqrt{N}}\, E_i^2$.

\item[(2)] $E_i\,E_j=E_j\,E_i$, for  $|i-j|\geq2$.

\item[(3)] $E_i\,E_{i\pm1}\,E_i=E_i$.
\end{itemize}
One can check the following joint relations for $E_i$ and $c_i$ algebraically,
$$E_i\,c_i^k=q^{-\frac{k^2}{2\phantom{k}}}E_i\,c_{i+1}^k\;,$$
$$c_i^k E_i=q^{\frac{1^2}{2\phantom{1}}}c_{i+1}^kE_i\;.$$
They can also be derived from the relation
$\graa{Fck}=\zeta^{k^2} \graa{ck}$.

\subsection{String Fourier transform\label{sec:Fourier}}
The string Fourier transform is an important ingredient in subfactor planar (para) algebras. Since the parafermion algebra forms a subfactor planar para algebra. we can introduce the string Fourier transform on parafermion algebras. Its algebraic definition is complicated, but its topological definition is simply a rotation.
The string Fourier transform $\mathfrak{F}_{\text{s}}$ is given by the action of the following tangle,
$$\graa{Fourier}.$$

Algebraically the string Fourier transform on $PF_m$ is defined as follows: We first embed $PF_m$ in $PF_{m+1}$ by mapping $c_i$ to $c_{i+1}$. The inclusion is denoted by $\iota_l$.
Let $\Phi_r$ be the trace preserving conditional expectation from $PF_{m+1}$ to the subalgebra $PF_m$ generated by $c_1,c_2, \cdots, c_m$.
Then the string Fourier transform of $x \in PF_m$ is defined as
$\mathfrak{F}_{\text{s}}(x)=\sqrt{N}\Phi_r(E_mE_{m-1}\cdots E_1\iota_l(x))$.

In particular, the zero graded part of the 2-box space has a basis $$\left\{\graa{i=-i}\right\}_{i\in\mathbb{Z}_N}\;.$$
Moreover, the basis forms the group $\mathbb{Z}_N$:
$$\graa{iijj}=\graa{i+ji+j}.$$

\begin{proposition}\label{Prop:SFT=FT}
The restriction of the string Fourier transform on the zero graded part of 2-box space is the discrete Fourier transform on the group $\mathbb{Z}_N$:
\begin{equation}\label{Fouriertransform}
  \mathfrak{F}_{\text{s}}\left(\graa{i=-i}\right)=\frac{1}{\sqrt{N}}\sum_{j=0}^{N-1} q^{ij} \graa{j-j}.
\end{equation}
\end{proposition}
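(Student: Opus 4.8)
The plan is to reduce the statement to a single computation in the parafermion algebra $PF_3$, using the algebraic description of $\mathfrak{F}_{\text{s}}$ on $2$-boxes recorded in \S\ref{sec:Fourier}. Write $g_i$ for the zero-graded basis vector $\graa{i=-i}$. By Definition~\ref{Def:twist tensor general} together with the identification of Lemma~\ref{basis}, this vector is the parafermion monomial $g_i=\zeta^{\,i^2}c_1^{\,i}c_2^{-i}\in PF_2$, the normalization being precisely the one for which the group law $\graa{iijj}=\graa{i+ji+j}$, i.e.\ $g_ig_j=g_{i+j}$, holds. I would record this identification first, since fixing the Gaussian phase $\zeta^{\,i^2}$ correctly is exactly what makes the final answer an honest Fourier transform rather than a twisted variant.

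Next I would substitute into $\mathfrak{F}_{\text{s}}(x)=\sqrt{N}\,\Phi_r\!\left(E_2E_1\iota_l(x)\right)$, where $\iota_l\colon PF_2\to PF_3$, $c_i\mapsto c_{i+1}$, and $\Phi_r$ is the trace-preserving conditional expectation onto $\langle c_1,c_2\rangle$. Here $\iota_l(g_i)=\zeta^{\,i^2}c_2^{\,i}c_3^{-i}$, and since $q^{1/2}=\zeta$ the Temperley--Lieb generators read
\[
E_1=\frac{1}{\sqrt N}\sum_{k=0}^{N-1}\zeta^{\,k^2}c_1^{\,k}c_2^{-k}\;,\qquad
E_2=\frac{1}{\sqrt N}\sum_{l=0}^{N-1}\zeta^{\,l^2}c_2^{\,l}c_3^{-l}\;.
\]
Multiplying out $E_2E_1\iota_l(g_i)$ and reordering every monomial into the standard form $c_1^{\ast}c_2^{\ast}c_3^{\ast}$ by repeated use of $c_ac_b=q\,c_bc_a$ for $a<b$, the cross terms collapse cleanly: one checks $c_2^{\,l}c_3^{-l}c_1^{\,k}=c_1^{\,k}c_2^{\,l}c_3^{-l}$, so the only surviving braiding factor is $q^{\,l(i-k)}$, leaving a double sum over $k,l$ of monomials $c_1^{\,k}c_2^{\,l+i-k}c_3^{-l-i}$.

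I would then apply $\Phi_r$: a monomial $c_1^{a}c_2^{b}c_3^{d}$ survives if and only if $d\equiv0\pmod N$, in which case it maps to $c_1^{a}c_2^{b}$. This forces $l=-i$, kills the sum over $l$, and leaves $c_2$-exponent $-k$; after collecting the Gaussian weights $\zeta^{\,i^2}$, $\zeta^{\,k^2+l^2}$ and rewriting $q^{-i^2+ik}$ through $q=\zeta^2$, the $\zeta^{\,i^2}$ and $\zeta^{\,k^2}$ contributions recombine into $g_k$ and the residual phase is exactly $q^{\,ik}$. Multiplying by the overall $\sqrt N$ gives $\mathfrak{F}_{\text{s}}(g_i)=\tfrac{1}{\sqrt N}\sum_k q^{\,ik}g_k$, which is the transform $F$ of \eqref{FourierInversion} under the identification $g_i\leftrightarrow\delta_i$.

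The main obstacle is purely the phase bookkeeping: one must simultaneously track the Gaussian weights $\zeta^{\,k^2},\zeta^{\,l^2}$, the braiding factors $q^{\,l(i-k)}$, and the normalization of $g_i$, and a single sign error in the exponent of $\zeta$ (say taking $g_i=\zeta^{-i^2}c_1^{\,i}c_2^{-i}$) produces a residual $\zeta^{\,2k^2}$ that refuses to assemble into $g_k$. A cleaner but less self-contained alternative is topological: since for $2$-boxes $\mathfrak{F}_{\text{s}}=\rho_{\pi/2}$, one may rotate the diagram $\graa{i=-i}$ by a quarter turn and re-expand it in $\{g_j\}$ using relation~(3), $\mathfrak{F}_{\text{s}}(c^k)=\zeta^{\,k^2}c^k$, together with \eqref{e=sumgi}. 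I would keep the algebraic computation as the primary proof and cite the rotation picture only as the conceptual reason the transform is Fourier.
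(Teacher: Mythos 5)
Your proof is correct, and it takes a genuinely different route from the paper's. The paper's own argument is a three-line diagrammatic computation: Proposition \ref{rotation isotopy} realizes $\mathfrak{F}_{\text{s}}$ of the twisted product as a rotated, labelled cap--cup diagram, the through strings are then expanded by Equation \eqref{e=sumgi}, and para isotopy moves the labels past each other to produce exactly the phases $q^{ij}$ --- this is the ``topological alternative'' you mention at the end, so your instinct about the conceptual mechanism matches the paper. Your primary proof instead works entirely inside $PF_3$ with the conditional-expectation formula $\mathfrak{F}_{\text{s}}(x)=\sqrt{N}\,\Phi_r\bigl(E_2E_1\iota_l(x)\bigr)$ of \S\ref{sec:Fourier}, and the computation checks out in every detail: the reordering $c_2^{l}c_3^{-l}c_1^{k}=c_1^{k}c_2^{l}c_3^{-l}$, the surviving braiding factor $q^{l(i-k)}$, the constraint $l\equiv -i\pmod{N}$ forced by $\Phi_r$, and the phase collection $\zeta^{2i^2+k^2}q^{-i^2+ik}=\zeta^{k^2}q^{ik}$, giving $\mathfrak{F}_{\text{s}}(g_i)=\frac{1}{\sqrt{N}}\sum_k q^{ik}g_k$. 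Two remarks. First, your normalization $g_i=\zeta^{i^2}c_1^{i}c_2^{-i}$ is indeed the correct one: it is what Definition \ref{Def:twist tensor general} combined with Lemma \ref{basis} yields, it is the unique choice (up to a character) making $g_ig_j=g_{i+j}$, and it is the convention actually used in Equations \eqref{b-1}--\eqref{b-2}; the formula recalled immediately after \eqref{e=sumgi}, which puts $\zeta^{-i^2}$ in front of the $\otimes_+$ picture, has the exponents interchanged and conflicts with Definition \ref{Def:twist tensor general}, so your care on this point is exactly what saves the computation. Second, the one external input of your argument is the identification of the rotation tangle with the algebraic formula $\sqrt{N}\,\Phi_r(E_m\cdots E_1\iota_l(\cdot))$, which \S\ref{sec:Fourier} asserts but never proves; since the proposition is stated for the tangle-defined SFT, you should flag that identification explicitly as the point where your algebra attaches to the diagrammatics. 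The trade-off between the two proofs: the paper's is shorter and purely geometric but leans on the diagrammatic machinery (Proposition \ref{rotation isotopy} and Equation \eqref{e=sumgi}), while yours is elementary, keeps every phase explicit, and doubles as a consistency check between the algebraic and topological descriptions of $\mathfrak{F}_{\text{s}}$.
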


\begin{proof} Diagrammatically,
\begin{align*}
\mathfrak{F}_{\text{s}}\left(\graa{i=-i}\right)
&=\graa{p-i}\;, && \text{by Proposition \ref{rotation isotopy},}\\
&=\frac{1}{\sqrt{N}}\sum_{j=0}^{N-1} \graa{-iji} \;, && \text{by Equation \ref{e=sumgi},}\\
&=\frac{1}{\sqrt{N}}\sum_{j=0}^{N-1} q^{ij} \graa{j-j} \;, && \text{by para isotopy}.
\end{align*}
\end{proof}
Note that the 2-box space forms an $N$ by $N$ matrix algebra. Thus we can extend the Fourier transform on the group $\mathbb{Z}_N$ to the string Fourier transform on $N\times N$ matrices.

\subsection{Matrix units}
With the help of the pictures, we construct matrix units of parafermion algebras. The matrix units of $PF_{2m}$ are given by
$$N^{-\frac{m}{2}} \grc{basiseven},$$
for $0\leq i_1, i'_1, i_2, i'_2,  \cdots, i_m, i'_m \leq N-1$.
Note that $PF_1$ is the group algebra for $\mathbb{Z}_N$. The $N$ minimal projections of $PF_1$ are given by $\displaystyle Q_i=\frac{1}{N}\sum_{j=0}^{N-1} q^{ij} c_1^j$, for $0\leq i\leq N-1$.

The matrix units of $PF_{2m+1}$ are given by
$$N^{-\frac{m}{2}} \grc{basisodd}\;,$$
for $0\leq i, i_1, i'_1, i_2, i'_2,  \cdots, i_m, i'_m \leq N-1$.
If we apply the relation \ref{e=sumgi}, then the matrix units can be expressed in terms of the usual basis $\graa{12m}$ of the parafermion algebra $PF_m$.

\setcounter{equation}{0}
\section{Reflection positivity\label{sect:RP}}
In this section, we will apply the string Fourier transform on subfactor planar para algebras to prove the reflection positivity.
\subsection{General case}
Suppose $\mathscr{S}$ is a $(\mathbb{Z}_N,\chi)$ subfactor planar para *-algebra, where $\chi(i,j)=q^{ij}$, $q=e^{\frac{2\pi i}{N}}$.
Recall that $\zeta$ is a square root of $q$ and $\zeta^{N^2}=1$. Then $\zeta^{|x|_{}^2}$ is well-defined for any homogenous $x$.
By Proposition \ref{reflections}, the map $\Theta(x)= \zeta^{-|x|_{}^2} \rho_{\pi}(x^*)$ extends anti-linearly to a horizontal reflection on a subfactor planar para algebra.

In Proposition \ref{rotation isotopy}, we proved that $$\mathfrak{F}_{\text{s}}^{-m}\left(\raisebox{-.6cm}{
\tikz{
\draw (0-1/6,0) rectangle (1/2+1/6,1/2);
\node at (1/4,1/4) {\size{$\Theta(x)$}};
\node at (1/4,3/4) {\size{$\cdots$}};
\node at (1/4,-1/4) {\size{$\cdots$}};
\draw (0,0)--(0,-2/6);
\draw (1/2,0)--(1/2,-2/6);
\draw (0,1/2)--(0,5/6);
\draw (1/2,1/2)--(1/2,5/6);
\draw (1-1/6,0) rectangle (1+1/2+1/6,1/2);
\node at (1+1/4,1/4) {\size{$x$}};
\node at (1+1/4,3/4) {\size{$\cdots$}};
\node at (1+1/4,-1/4) {\size{$\cdots$}};
\draw (1+0,0)--(1+0,-2/6);
\draw (1+1/2,0)--(1+1/2,-2/6);
\draw (1+0,1/2)--(1+0,5/6);
\draw (1+1/2,1/2)--(1+1/2,5/6);
}}\right)=\grb{xex},$$
for any homogenous $m$-box $x$.
Note that the lower half of $\grb{xex}$ is the adjoint of the upper half. Thus
	$$\grb{xex}\geq 0$$
as an operator in the $C^*$ algebra $\mathscr{S}_{2m,\pm}$.
Reflection positivity is related to the $C^*$ positivity by the string Fourier transform $\mathfrak{F}_{\text{s}}^{-m}$, the anti-clockwise $\displaystyle \frac{\pi}{2}$ rotation.

\begin{theorem}[\bf Reflection Positivity: General Case]\label{RPgeneral}
Consider a subfactor planar para algebra $\mathscr{S}$, and a Hamiltonian $H\in \mathscr{S}_{2m,\pm,0}$.  Let  $\mathfrak{F}_{\text{s}}^{-m}(-H)$ be a positive operator in $\mathscr{S}_{2m,\pm}$. Then $H$ has reflection positivity on $\mathscr{S}_{m,\pm}$, for all $\beta\geq0$.  That is
	$$tr(e^{-\beta H} (\Theta(x)\otimes_{t} x))\geq0,$$
for any homogenous $x\in \mathscr{S}_{m,\pm}$.
\end{theorem}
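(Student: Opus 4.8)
The plan is to transport the inequality into a statement of $C^{*}$-positivity in the finite-dimensional algebra $\mathscr{S}_{2m,\pm}$, using the string Fourier transform as the bridge between the two reflections. The two inputs I would lean on are Proposition~\ref{rotation isotopy}, which says that the anti-clockwise $\frac{\pi}{2}$ rotation carries the horizontal-reflection double into a manifestly positive operator, $\mathfrak{F}_{\text{s}}^{-m}(\Theta(x)\otimes_{t}x)=\grb{xex}$, whose lower half is the adjoint of its upper half so that it equals $W_{x}^{*}W_{x}\ge 0$; and the hypothesis that $K:=\mathfrak{F}_{\text{s}}^{-m}(-H)$ is a positive operator.

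First I would expand the Gibbs factor as the power series
\[
e^{-\beta H}=\sum_{n=0}^{\infty}\frac{\beta^{n}}{n!}\,(-H)^{n},
\]
which converges since $\mathscr{S}_{2m,\pm}$ is finite-dimensional and $H$ is self-adjoint (it must be, as $K$ is positive, hence self-adjoint). Because every coefficient $\beta^{n}/n!$ is nonnegative for $\beta\ge 0$, it is enough to prove the term-by-term estimate $tr\big((-H)^{n}(\Theta(x)\otimes_{t}x)\big)\ge 0$ for each $n\ge 0$; summing then gives the theorem. This trades the analytic statement for a family of purely algebraic ones.

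Next, writing $D(w):=\Theta(w)\otimes_{t}w$, I would use the positivity of $K$ to write $K=\sum_{j}y_{j}^{*}y_{j}$ and, inverting Proposition~\ref{rotation isotopy}, realize $-H=\mathfrak{F}_{\text{s}}^{m}(K)$ as a nonnegative combination $\sum_{j}D(w_{j})$ of doubles. The homomorphism property $D(a)D(b)=D(ab)$ established earlier then collapses each product, so that
\[
(-H)^{n}=\sum_{j_{1},\dots,j_{n}}D\big(w_{j_{1}}\cdots w_{j_{n}}\big),
\]
and each surviving term pairs against the double as
\[
tr\big(D(u)\,D(x)\big)=tr\big(D(ux)\big)=tr\big(\mathfrak{F}_{\text{s}}^{-m}(D(ux))\big)=tr\big(W_{ux}^{*}W_{ux}\big)\ge 0,
\]
where the second equality uses rotation-invariance of the Markov trace, the third is Proposition~\ref{rotation isotopy}, and the inequality holds because $W_{ux}^{*}W_{ux}$ is a positive element and $tr$ is a positive trace. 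Summing over $n$ and over the indices then yields the term-by-term bound of the previous step.

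The main obstacle is the passage in the third step from $C^{*}$-positivity of $K=\mathfrak{F}_{\text{s}}^{-m}(-H)$ to the expression of $-H$ inside the cone generated by the doubles $D(w)$. Here one cannot simply apply $\mathfrak{F}_{\text{s}}^{m}$ factor-by-factor, because --- as stressed in the abstract --- the string Fourier transform sends ordinary products to convolution products and is therefore neither multiplicative nor an algebra map; the reorganization is justified only through the geometric identity of Proposition~\ref{rotation isotopy}, which pins the $\mathfrak{F}_{\text{s}}^{-m}$-image of a double to the positive operator $W_{w}^{*}W_{w}$. Thus the real content is that the $\frac{\pi}{2}$ rotation intertwines the $C^{*}$-positive cone of $\mathscr{S}_{2m,\pm}$ with the cone of reflection-doubles; checking that this intertwining actually reaches the whole positive cone --- that is, that the half-diagram assignments $w\mapsto W_{w}$ are rich enough --- is the delicate point, and is precisely where the fact that the two reflections differ by a single string Fourier transform is used.
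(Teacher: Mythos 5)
Your reduction to the term-by-term inequality $tr((-H)^{n}(\Theta(x)\otimes_{t}x))\ge 0$ matches the paper's first step, but your third step --- ``use positivity of $K=\mathfrak{F}_{\text{s}}^{-m}(-H)$ to realize $-H$ as a nonnegative combination $\sum_{j}D(w_{j})$ of doubles'' --- is a genuine gap, and in fact that claim is false for general subfactor planar para algebras. Proposition \ref{rotation isotopy} only identifies $\mathfrak{F}_{\text{s}}^{-m}(D(w))$ with the special positive element $\hat{w}\hat{w}^{*}$ built from a \emph{bent} $m$-box $\hat{w}$; every such element lies in the two-sided ideal of $\mathscr{S}_{2m,\pm}$ generated by the nested cap-cup diagram $\mathfrak{F}_{\text{s}}^{-m}(I_{2m})$ (the basic construction ideal). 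A general positive $K$ need not lie in that ideal, so a decomposition $K=\sum_{j}y_{j}^{*}y_{j}$ with arbitrary $2m$-boxes $y_{j}$ cannot be converted into one with $y_{j}^{*}y_{j}=\hat{w}_{j}\hat{w}_{j}^{*}$. Concretely, take $\mathscr{S}$ to be the Temperley--Lieb--Jones subfactor planar algebra with $\delta\ge 2$ and trivial grading (a legitimate instance of the theorem), $m=1$, and $H=-e_{1}$, where $e_{1}$ is the cup-cap $2$-box. Then $\mathfrak{F}_{\text{s}}^{-1}(-H)=I_{2}\ge 0$, so the hypothesis holds; but every double $\Theta(w)\otimes_{t}w$ of a $1$-box $w=\lambda\cdot\vert$ equals $|\lambda|^{2}I_{2}$, so the cone of doubles is $\mathbb{R}_{\ge 0}I_{2}$ and does not contain $-H=e_{1}$. (Your cone claim does hold in the parafermion algebra, where the rotated $2m$-box space is a full matrix algebra acting on the span of bent $m$-boxes; that is what makes it tempting, but Theorem \ref{RPgeneral} concerns arbitrary subfactor planar para algebras.) A secondary problem: $D(a)D(b)=D(ab)$ was proved only for homogeneous $a,b$, and the $w_{j}$ arising from any spectral-type decomposition need not be homogeneous.

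The paper's proof avoids decomposing $-H$ altogether. It rotates the \emph{entire closed diagram} computing $\delta^{2m}\,tr((-H)^{k}(\Theta(x)\otimes_{t}x))$ by $\frac{\pi}{2}$: under this rotation the ordinary $k$-fold product of $-H$ turns into the $k$-fold convolution (coproduct) of $K=\mathfrak{F}_{\text{s}}^{-m}(-H)$, sandwiched between the bent copies of $x$ and $x^{*}$. Inserting the square root $T=K^{1/2}$ exhibits the rotated diagram as one whose lower half is the adjoint of its upper half, hence $\ge 0$; this is Equation \eqref{RPequation}, a Schur-product-type positivity statement for the convolution. That is the mechanism your argument needs but does not supply: positivity of $K$ is used \emph{inside} the rotated trace diagram, never as a statement that $-H$ itself lies in the cone generated by reflection-doubles.
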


\begin{proof}
If $\mathfrak{F}_{\text{s}}^{-m}(-H)$ is positive, then we take its square root $T=T^*=(\mathfrak{F}_{\text{s}}^{-m}(-H))^{\frac{1}{2}}$.
For any homogenous $x\in \mathscr{S}_{m,\pm}$, $\Theta(x)\otimes_t x$ is zero graded. Applying anti-clockwise $\displaystyle \frac{\pi}{2}$ rotation, we have
\begin{equation}\label{RPequation}
\grd{ref1}=\grd{ref3}=\grd{ref4}\geq0.
\end{equation}
The last inequality holds, since the lower half is adjoint of the upper half.
Algebraically, for any $k\geq 0$, $$\delta^{2m} tr( (-H)^k \Theta(x)\otimes_t x)\geq0.$$
For any $\beta>0$, we have that
\begin{align*}
tr(e^{-\beta H} (\Theta(x)\otimes_{t} x))&=\sum_{k=0}^{\infty} \beta^k tr( (-H)^k \Theta(x)\otimes_t x)\geq0\\
\end{align*}
and $H$ has reflection positivity.
\end{proof}

\begin{remark}
The string Fourier transform as the anti-clockwise $\frac{\pi}{2}$ rotation changes the trace to vacuum state, the multiplication to the convolution.
The positivity of the convolution positive operators is known as the Schur product theorem, proved in \cite{Liuex} for subfactor planar algebras.  For the parafermion algebra case, the Schur product of $\mathfrak{F}_{\text{s}}^{-m}(-H)$ corresponds to the Hadamard product of the coupling constant matrix of $H$.
\end{remark}

\subsection{Quantized vectors}
The homogenous condition for $x$ in Theorem \ref{RPgeneral} is not necessary. Recall that we can lift the grading and the twisted tensor product in general in Definition \ref{Def:twist tensor general}.

Suppose $\displaystyle x=\sum_{i=0}^{N-1}  x_i$ and $\displaystyle y=\sum_{i=0}^{N-1} y_i$, and $x_i$, $y_i$ are graded by $i$.
Let $i\to i'$ be a lift of the grading to $\Z$ and we define $\displaystyle \hat{x}=\sum_{i=0}^{N-1}  (x_i,i')$ and $\displaystyle \hat{y}=\sum_{i=0}^{N-1}  (y_i,i')$.

For a Hamiltonian $H\in\mathscr{S}_{2m,\pm}$ , we define the inner product
$$<x,y>_{\Theta}=tr(e^{-\beta H} \Pi(\Theta(\hat{x})\otimes_{t} \hat{y})),$$
for $x, y\in \mathscr{S}_{m,\pm}$.
Then $$<x,y>_{\Theta}=\sum_{i=0}^{N-1} tr(e^{-\beta H} \zeta^{i^2} \Theta(x_i) \otimes_+ y_i),$$
which is independent of the choice of the lift.

If $H$ has reflection positivity, then $\mathscr{S}_{m,\pm}$ forms a Hilbert space with respect to the inner product $<\cdot,\cdot>_{\Theta}$, called the quantized space.
The image of $x$ in the quantized space is denoted by $\hat{x}$. We give a presentation of the quantized vector $\hat{x}$ in the subfactor planar para algebra $\mathscr{S}$.

\begin{theorem}\label{quantized vector}
Suppose $\mathfrak{F}_{\text{s}}^{-m}(-H)$ is positive, and $T$ is its square root. We construct the quantized vector
$$ \hat{x}:=\oplus_{k=0}^{\infty} \frac{\beta^{\frac{k}{2}}}{\delta^{k}}\grb{quantizedvector}.$$
(There are $k$ copies of $T$ in the diagram.) Then
$$\lra{x,x}_{\Theta}=\hat{x}^*\hat{x}\geq0.$$
\end{theorem}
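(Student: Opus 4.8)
The plan is to expand $\langle x,x\rangle_{\Theta}$ as a power series in $\beta$, to recognize each coefficient as a single non-negative number through the rotation argument already used in Theorem~\ref{RPgeneral}, and then to read off the diagram $\grb{quantizedvector}$ as the $k$-th block of a direct sum whose norm-square is exactly this series.

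First I would reduce to homogeneous $x$. The general case is then recovered from the lift $\hat x=\sum_i (x_i,i')$ of Definition~\ref{Def:twist tensor general}, using that $\langle x,x\rangle_{\Theta}=\sum_i tr(e^{-\beta H}\,\zeta^{i^2}\,\Theta(x_i)\otimes_+ x_i)$ splits as a sum of homogeneous contributions and is independent of the lift. For homogeneous $x$, expanding the exponential gives
$$\langle x,x\rangle_{\Theta}=\sum_{k=0}^{\infty}\tfrac{\beta^{k}}{k!}\,tr\big((-H)^{k}\,(\Theta(x)\otimes_{t} x)\big),$$
so it suffices to identify the $k$-th summand with the norm-square of the $k$-th block of $\hat x$ and to see that each block is non-negative.

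The core step is to apply the anti-clockwise $\tfrac{\pi}{2}$ rotation $\mathfrak{F}_{\text{s}}^{-m}$ to each summand, exactly as in the proof of Theorem~\ref{RPgeneral}. By isotopy invariance of the partition function the scalar $tr((-H)^{k}(\Theta(x)\otimes_{t}x))$ is unchanged, while the rotation replaces each factor $-H$ by $\mathfrak{F}_{\text{s}}^{-m}(-H)=T^{2}=T^{*}T$ and, by Proposition~\ref{rotation isotopy}, sends $\Theta(x)\otimes_{t}x$ to the folded diagram $\grb{xex}$ whose lower half is the adjoint of its upper half. The rotated $0$-tangle therefore carries $k$ copies of $T^{2}$ stacked along the strand joining the two halves; writing $T^{2}=T\cdot T$ and placing the horizontal mirror axis through the centre, the picture acquires a reflection symmetry and factors as $D_{k}^{*}D_{k}$, where $D_{k}$ is the upper half carrying $k$ copies of $T$, the cap built from $x$, and the normalization — that is, the diagram $\grb{quantizedvector}$. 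Assembling these, $\hat x=\oplus_{k}\tfrac{\beta^{k/2}}{\delta^{k}}\grb{quantizedvector}$ has $\hat x^{*}\hat x=\sum_{k}D_{k}^{*}D_{k}$, which reproduces the series above and is manifestly $\geq 0$.

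The main obstacle I expect is the diagrammatic and combinatorial bookkeeping in this last factorization: one must verify that the $k$ copies of $T^{2}$ produced from $(-H)^{k}$ split evenly into $k$ copies of $T$ above and $k$ below the reflection axis, that the powers of $\delta$ and $\beta$ and the factorial $1/k!$ are absorbed exactly as claimed, and — in the non-homogeneous case — that the twist factors $\zeta^{\pm i^{2}}$ coming from $\otimes_{t}$ and the lift are accounted for so that the blocks for distinct $k$ are mutually orthogonal and the direct sum is legitimate. The positivity of each block is \emph{not} the difficulty, since it is inherited from Theorem~\ref{RPgeneral}; the genuine work lies in matching the explicit vector $\hat x$ to the series term by term.
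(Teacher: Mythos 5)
Your proposal is correct and follows essentially the same route as the paper: the paper likewise reduces to homogeneous components and identifies $\lra{x_i,x_i}_\Theta$ with $\hat{x_i}^*\hat{x_i}$ via the rotation identity \eqref{RPequation} established in the proof of Theorem \ref{RPgeneral}, which is precisely your expand--rotate--factor step. The one item you defer to ``bookkeeping'' the paper settles in a single line, and note that it concerns distinct gradings rather than distinct $k$-blocks: $\lra{x_i,x_j}_\Theta$ and $\hat{x_i}^*\hat{x_j}$ are scalars graded by $j-i$, hence vanish for $i\neq j$, which is exactly what legitimizes summing the homogeneous contributions on both sides.
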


\begin{proof}
Suppose $\displaystyle x=\sum_{i=0}^{N-1}  x_i$ and $x_i$ is graded by $i$. Then $\lra{x_{i},x_{i}}_{\Theta}=\hat{x_i}^*\hat{x_i}$ by Equation \ref{RPequation}.
Since $\lra{x_i,x_j}_{\Theta}$ and $\hat{x_i}^*\hat{x_j}$ are graded by $j-i$, we infer that they are zero if $i\neq j$. Therefore
$$<x,x>_{\Theta}=\sum_{i=0}^{N-1} <x_i,x_i>=\sum_{i=0}^{N-1} \hat{x_i}^*\hat{x_i}=\hat{x}^*\hat{x}\geq0.$$
\end{proof}
\goodbreak

\subsection{Parafermion algebras}
Recall that the basis of $PF_{m}$ is given by $c_1^{i_1}c_2^{i_2}\cdots c_m^{i_m}$,
%$c^{i_1}\otimes_+c^{i_2}\cdots\otimes_+c^{i_m}$,
$0\leq i_1, i_2 \cdots i_m\leq N-1$.

Let $A_+$ be the sub algebra of $PF_{2m}$ that consists of $I_m \otimes x$, for $x\in PF_{m}$.  Let $A_-$ be the sub algebra of $PF_{2m}$ that consists of $y\otimes I_m$, for  $y\in PF_{m}$. Then the graded tensor product $A=A_- \hat{\otimes} A_+$ is $PF_{2m}$.

Note that $\Theta(c)=\zeta\mathfrak{F}_{\text{s}}^{-1}(c^*)=c^{-1}$.
The reflection $\Theta$ from $A_\pm\cong PF_{m}$ to $A_\mp\cong PF_{m}$ is the anti-linear extension of $\Theta(c^{i_1}\otimes_+c^{i_2}\cdots\otimes_+c^{i_m})= c^{-i_m} \otimes_- \cdots \otimes_- c^{-i_2} \otimes_- c^{-i_1}$.
Therefore $A=\theta(A_+) \hat{\otimes} A_+$. We call the graded tensor product $A$ the double algebra of $A_+$.

Take the Hamiltonian $H$ in $PF_{m}$. In terms of the basis $C_I$, we have
$$-H= \sum_{I,I'} J_{I}^{I'} \Theta(C_I) \otimes_{t} C_{I'}$$
for some coupling constants $J_{I}^{I'}$.  The Hamiltonian $H$ is called reflection invariant, if $\Theta(H)=H$, or equivalently $J_{I}^{I'}=\overline{J_{I'}^{I}}$ for all $I$, $I'$, or equivalently $J$ is a Hermitian matrix.

Let $J_0$ be the sub matrix of $J$, whose coordinates $I$ and $I'$ are both non-empty, i.e., the matrix of coupling constants crossing the reflection plane.   The following theorem is formulated and proved in \cite{JafJan2016} by a different method. Here we give a diagrammatic interpretation that gives special insight and understanding.

\begin{theorem}[\bf Reflection Positivity for Parafermions]
\label{RPparafermion}
Suppose the Hamiltonian $H$ is reflection invariant and $|H|_{+}=0$. Then $H$ has reflection positivity, i.e.,
$$tr(e^{-\beta H} (\Theta(x)\otimes_{t} x))\geq0,$$
for any $x\in PF_m$, for all $\beta\geq0$, if and only if $J_0\geq0$.
\end{theorem}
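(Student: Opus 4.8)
The plan is to deduce the statement from the general reflection-positivity criterion of Theorem~\ref{RPgeneral} together with an explicit computation of the string Fourier transform of $-H$ in the parafermion basis. Writing $-H=\sum_{I,I'}J_I^{I'}\,\Theta(C_I)\otimes_t C_{I'}$ and using linearity of $\mathfrak{F}_{\text{s}}$, I would first evaluate $\mathfrak{F}_{\text{s}}^{-m}(\Theta(C_I)\otimes_t C_{I'})$ term by term. By Proposition~\ref{rotation isotopy} the anti-clockwise $\tfrac{\pi}{2}$ rotation turns $\Theta(C_I)\otimes_t C_{I'}=\rho_{\pi}(C_I^{*})\otimes_+ C_{I'}$ into the diagram with $C_{I'}$ on top and $C_I^{*}$ below, joined through the middle cap; relative to the orthonormal basis $\{C_J\}$ of $(\mathscr{P}/\ker Z)_m$ supplied by Lemma~\ref{basis}, this is the matrix unit $\lvert C_{I'}\rangle\langle C_I\rvert$. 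Hence $\mathfrak{F}_{\text{s}}^{-m}(-H)$ is represented in this basis by the Hermitian matrix $J$ itself. The empty multi-index $I=\varnothing$ corresponds to $C_{\varnothing}=1$, i.e.\ to the vacuum (tracial) direction, so that $J_0$ is exactly the compression of $J$ to the orthogonal complement of the vacuum vector.

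For the sufficiency direction ($J_0\ge0\Rightarrow$ RP) I would isolate the contributions of the empty index. Reflection invariance (so that $J$ is Hermitian) and $|H|_{+}=0$ let me write $-H=-H_0+(1\otimes_t a)+(\Theta(a)\otimes_t 1)+J_{\varnothing}^{\varnothing}$, where $-H_0=\sum_{I,I'\ne\varnothing}J_I^{I'}\Theta(C_I)\otimes_t C_{I'}$ is the cross term and the remaining pieces are one-sided reflections of one another plus a real constant. When $J_0\ge0$ one diagonalizes to obtain $-H_0=\sum_{\alpha}\Theta(g_{\alpha})\otimes_t g_{\alpha}$ with $g_{\alpha}\in PF_m$, so that $\mathfrak{F}_{\text{s}}^{-m}(-H_0)=\sum_{\alpha}\lvert g_{\alpha}\rangle\langle g_{\alpha}\rvert\ge0$ and Theorem~\ref{RPgeneral} already yields reflection positivity of $H_0$. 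The one-sided term $1\otimes_t a$ contributes, through the identity $tr(\Theta(u)\otimes_t v)=tr(u^{*}v)$ and the homomorphism property $\Theta(xy)=\Theta(x)\Theta(y)$, factors of the form $tr(x^{*}e^{\beta a}x)=\lVert e^{\beta a/2}x\rVert^{2}\ge0$: it is \emph{absorbed} into the test vector rather than requiring any positivity of its own. Carrying this absorption through the Duhamel/Taylor expansion of $e^{-\beta H}$ — each factor of $-H_0$ inserting a matched pair $\Theta(g_{\alpha})$ on the left and $g_{\alpha}$ on the right, each one-sided factor reweighting $x$ symmetrically on the two sides, and the constant being real — turns every term into a trace $tr(w^{*}w)\ge0$, which is the diagrammatic content of \eqref{RPequation}. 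This is what shows that RP needs only $J_0\ge0$, not the full positivity of $J$.

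For the necessity direction (RP$\Rightarrow J_0\ge0$) I would run the absorption in reverse: since $x\mapsto e^{\beta a/2}x$ is invertible, reflection positivity of $H$ is equivalent to that of the pure cross term $H_0$, whose quantized form is controlled by $\mathfrak{F}_{\text{s}}^{-m}(-H_0)=J_0\oplus 0$ via Theorem~\ref{quantized vector}. Testing $tr(e^{-\beta H_0}(\Theta(x)\otimes_t x))$ on vectors $x$ supported on the non-empty basis elements and extracting the order in $\beta$ at which the cross-coupling first enters (the lower orders being fixed by the $L^{2}$ norm and the one-sided data) forces $\langle x,J_0 x\rangle\ge0$; a negative eigenvalue of $J_0$ would otherwise produce an $x$ and a value of $\beta$ with $tr(e^{-\beta H}(\Theta(x)\otimes_t x))<0$.

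The main obstacle I expect is precisely the disentangling of the vacuum (empty-index) direction from $J_0$ in both directions: the criterion of Theorem~\ref{RPgeneral} only sees the full matrix $J=\mathfrak{F}_{\text{s}}^{-m}(-H)$, whose positivity is strictly stronger than $J_0\ge0$, so the sharp equivalence hinges on proving that the one-sided and constant parts never obstruct reflection positivity. Making the absorption rigorous through the non-commuting Duhamel expansion — tracking the $\zeta$- and $q$-phases produced by $\otimes_t$ and by para isotopy, and verifying that the left words are genuinely the $\Theta$-conjugates of the right words — is the delicate bookkeeping; once that factorization into $tr(w^{*}w)$ is in place, positivity is automatic.
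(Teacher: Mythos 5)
Your opening step --- rotating $-H$ by $\mathfrak{F}_{\text{s}}^{-m}$ and reading off the coupling matrix $J$ in the matrix units built over the basis of Lemma \ref{basis} --- is exactly the paper's Equation (\ref{fm}), and the observation that $J_0\geq 0$ lets you write $-H_0=\sum_\alpha \Theta(g_\alpha)\otimes_t g_\alpha$ is sound. From there, however, you leave the paper's route: your Trotter/Duhamel ``absorption'' scheme is essentially the method of \cite{JafJan2016}, which the paper explicitly describes as ``a different method,'' and your execution of it has concrete errors. The identity $tr(\Theta(u)\otimes_t v)=tr(u^*v)$ is false: the Markov trace factorizes across the two halves of the double, so $tr(\Theta(u)\otimes_t v)=\overline{tr(u)}\,tr(v)$ (only zero-graded components survive, so no $\zeta$- or $q$-phases remain); taking $u=v=c_1$ gives $0\neq 1$. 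Thus the reflection pairing is a rank-one form at $\beta=0$, not the $L^2$ inner product, and the quantities an absorption argument produces are of the form $\abs{tr(e^{\beta a}Wx)}^2$, not $\norm{e^{\beta a/2}x}^2$. Moreover, the part you defer as ``delicate bookkeeping'' is the entire content of the theorem in the graded setting, and the structural fact that makes it work never appears in your proposal: because $|H|_{+}=0$, the one-sided coefficient $a=\sum_{I'\neq\emptyset}J_\emptyset^{I'}C_{I'}$ is itself zero-graded, hence commutes with the reflected algebra with no $q$-phases; only then does each Trotter slice become a positive combination of \emph{matched} terms, do products of matched terms stay matched via $(\Theta(x)\otimes_t x)(\Theta(y)\otimes_t y)=\Theta(xy)\otimes_t (xy)$, and does the final pairing reduce to $tr(\Theta(W)\otimes_t W)=\abs{tr(W)}^2\geq 0$.

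The necessity direction as you describe it would fail outright. You propose to extract ``the order in $\beta$ at which the cross-coupling first enters,'' with ``the lower orders being fixed by the $L^2$ norm''; but if the $\beta^0$ term were $\norm{x}^2>0$, positivity at small $\beta$ would impose no constraint on any higher order --- a sign can only be read off at the first \emph{non-vanishing} order. The paper's argument works precisely because, for homogeneous $x$ with $tr(x)=0$, the $\beta^0$ term is $\abs{tr(x)}^2=0$ and the one-sided and constant parts of $H$ contribute nothing at first order (each such trace carries a factor $tr(x)$ or $\overline{tr(x)}$), so $tr(-H\,(\Theta(x)\otimes_t x))\geq 0$; a $\frac{\pi}{2}$ rotation and Equation (\ref{fm}) then identify this with the $J_0$ quadratic form. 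Your alternative reduction --- ``RP of $H$ iff RP of $H_0$ because $x\mapsto e^{\beta a/2}x$ is invertible'' --- is also unjustified: $H_0$, $1\otimes_t a$ and $\Theta(a)\otimes_t 1$ do not commute, so $e^{-\beta H}$ does not factor through $e^{-\beta H_0}$, and the pairing into which you want to absorb is not a norm. Finally, note how the paper's sufficiency proof avoids expansions altogether: after normalizing $J_\emptyset^\emptyset=0$ it sets $-H(s)=-H+s\sum_{I,I'\neq\emptyset}J_I^\emptyset J_\emptyset^{I'}\Theta(C_I)\otimes_t C_{I'}+s^{-1}I_{2m}$, observes that $\mathfrak{F}_{\text{s}}^{-m}(-H(s))$ is the $J_0$ block plus $s^{-1}$ times the manifestly positive square $\bigl(v_\emptyset^\emptyset+s\sum_{I\neq\emptyset}J_I^\emptyset v_I^\emptyset\bigr)\bigl(v_\emptyset^\emptyset+s\sum_{I\neq\emptyset}J_I^\emptyset v_I^\emptyset\bigr)^*$, applies Theorem \ref{quantized vector}, discards the harmless scalar, and lets $s\to 0$. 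That completion-of-the-square, together with the already-proved general Theorem \ref{RPgeneral}, replaces all of the absorption machinery you would still need to build.
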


\begin{proof}
Take
$$v_{I}^{I'}=N^{-\frac{m}{2}}\grb{doublecirclehalf}.$$
Then $v_{I}^{I'}$ are matrix units acting on the Hilbert space $V=\{\grb{capy}|y\in PF_m\}$.
By Proposition \ref{rotation isotopy},
\be\label{fm}
\mathfrak{F}_{\text{s}}^{-m}(-H)= N^{\frac{m}{2}} \sum_{I,I'} J_{I}^{I'} v_{I}^{I'}.
\ee

\iffalse
Take $\cap_m$ to be $\graa{capm}$ and $\cup_m$ to be $\graa{cupm}$.

Rotate the left half side of $H$ anticlockwise to the right bottom, by Proposition \ref{rotation isotopy}, we obtain the operator
$\mathfrak{F}_{\text{s}}^{-m}(-H)=\sum_{I,I'} J_{I}^{I'} (C_I^* \otimes 1^{\otimes m}) \cap_m \cup_m (C_{I'} \otimes 1^{\otimes m}).$

Case 1: When $J\geq0$, we have $\mathfrak{F}_{\text{s}}^{-m}(-H)\geq0$.

Let $*_m$ be the $m$-string coproduct on $PF_{2m}$, and $(\mathfrak{F}_{\text{s}}^{-m}(-H))^{*k}$ be the $k$th power under the $m$-string coproduct. Then $(\mathfrak{F}_{\text{s}}^{-m}(-H))^{*k}\geq0$ by Schur Product theorem proved in \cite{Liuex}.

When $x$ is homogenous, $\Theta(x)\otimes_{t}x$ is zero graded. Applying the $\displaystyle \frac{\pi}{2}$ rotation, we have
$$\grd{ref1}=\grd{ref3}=\grd{ref2}\geq0.$$

Algebraically, $$\delta^{2m} tr( (-\beta H)^k \Theta(x)\otimes_t x)\geq 0.$$

\begin{align*}
&\delta^{2m} tr( (-\beta H)^k \Theta(x)\otimes_t x)\\
=& \beta^k \cup_m (x^* \otimes 1^{\otimes m}) (\mathfrak{F}_{\text{s}}^{-m}(-H))^{*k} (x \otimes 1^{\otimes m}) \cap_m\\
\geq&0.
\end{align*}

When $x=\sum_g x_g$, we have
\begin{align*}
&\delta^{2m} tr( (-\beta H)^k \Theta(x)\otimes_t x)\\
=&\sum_g \delta^{2m} tr( (-\beta H)^k (\Theta(x_g)\otimes_t x_g))\\
\geq&0
\end{align*}
Sum over $k$, we have
$$tr(e^{-\beta H} (\Theta(x)\otimes_{t} x))\geq0.$$
\fi
Note that $e^{-\beta (H+rI_{2m})}=e^{-\beta r}e^{-\beta H}$, so the scalar $r$ will not affect the reflection positivity condition of $H$.
Without loss of generality, we assume that $J_{\emptyset}^{\emptyset}=0$.

When $J_0\geq 0$, for any $s>0$, take
\begin{align*}
-H(s)&=-H+s\sum_{I,I'\neq \emptyset} J_{I}^{\emptyset} J_{\emptyset}^{I'}  \Theta(C_I) \otimes_{t} C_{I'})+s^{-1} I_{2m}\;.
%&=\sum_{I,I'\neq \emptyset} (J_{I}^{I'} \Theta(C_I) \otimes_{t} C_{I'} +s J_{I}^{\emptyset} J_{\emptyset}^{I'}  \Theta(C_I) \otimes_{t} C_{I'})\\
%&+ \sum_{I} (J_{I}^{\emptyset}  \Theta(C_I) \otimes_{t} C_{\emptyset} + \overline{J_{\emptyset}^{I}}  \Theta(C_\emptyset) \otimes_{t} C_{I}) + s^{-1} I_{2m}.
\end{align*}
Since $J$ is Hermitian, we have
\begin{align*}
\mathfrak{F}_{\text{s}}^{-m}(-H(s))&=N^{\frac{m}{2}}\sum_{I\neq\emptyset,I'\neq\emptyset}J_{I}^{I'}v_{I}^{I'}
+N^{\frac{m}{2}}s^{-1}(v_{\emptyset}^{\emptyset}+s\sum_{I\neq\emptyset}J_{I}^{\emptyset}v_{I}^{\emptyset})(v_{\emptyset}^{\emptyset}+s\sum_{I\neq\emptyset}J_{I}^{\emptyset}v_{I}^{\emptyset})^*\\
\geq0
\end{align*}
By Theorem \ref{quantized vector}, $H(s)$ has reflection positivity,
$$tr(e^{-\beta H(s)} (\Theta(x)\otimes_{t} x))\geq0,$$
so does $H(s)-s^{-1}I_{2m}$.
Take $s\rightarrow 0$. This shows that  $H$ has reflection positivity,  $$tr(e^{-\beta H} (\Theta(x)\otimes_{t} x))\geq0\;.$$

On the other hand, if $H$ has reflection positivity for all $\beta\geq0$, then for any homogenous $x$ in $PF_m$ orthogonal to $I_m$, we have
$$tr(e^{-\beta H} (\Theta(x)\otimes_{t} x))\geq0\;,$$
and the equality holds when $\beta=0$.
Take the first derivative with respect to $\beta$. Then we have
\be\label{RPH}
tr(-H (\Theta(x)\otimes_{t} x))\geq0\;.
\ee
Apply the anti-clockwise $\frac{\pi}{2}$ rotation to Equation \ref{RPH}, and use Equation \ref{fm}. This shows that
we have
$$
\sum_{I,I'}J_I^{I\prime}\gre{doublecircle}\geq0\;,
$$
for any $m$-box $x$ orthogonal to $I_m$.
Therefore the matrix $J_0$ as the restriction of $J$ on the subspace $V\setminus\mathbb{C}\{\graa{capm}\}$ is positive.
\end{proof}

\setcounter{equation}{0}
\section{Braid relations}\label{braided relations}
In this section, we construct braids for parafermion algebras which behave well in a diagrammatic way, so that the strings can act over the parafermion planar para algebra $PF_{\bullet}$ in the 3-dimensional space.

Take $\omega=\frac{1}{\sqrt{N}}\sum_{i=0}^{N-1}\zeta^{i^2}$, so $|\omega|=1$ by Proposition \ref{gauss sum}. Let $\omega^{\frac{1}{2}}$ be a square root of $\omega$.
Let us construct the braids as
\begin{align}
\graa{b-}&=\frac{\omega^{\frac{1}{2}}}{\sqrt{N}}\sum_{i=0}^{N-1}\graa{i+-i}\label{b-1}\\
&=\frac{\omega^{\frac{1}{2}}}{\sqrt{N}}\sum_{i=0}^{N-1}\zeta^{-i^2}\graa{i=-i}\;,\label{b-2}\\
\graa{b+}&=\frac{\omega^{-\frac{1}{2}}}{\sqrt{N}}\sum_{i=0}^{N-1}\graa{i--i}\label{b+1}\\
&=\frac{\omega^{\frac{1}{2}}}{\sqrt{N}}\sum_{i=0}^{N-1}\zeta^{i^2}\graa{i=-i}.\label{b+2}
\end{align}
Since $\zeta^{i^2}=\zeta^{(-i)^2}$, the two braids behave well under the vertical reflection $*$ and also under the horizontal reflection $\Theta$:
\be
\left(\graa{b-}\right)^*=\graa{b+}\;,
\qquad
\Theta\left(\graa{b+}\right)=\graa{b-}\;.\\
\ee

\begin{proposition}\label{fourierbraid}
With the above notion of $\omega^{\frac{1}{2}}$, the two braids behave well under $\frac{\pi}{2}$~rotation:
\be\label{fourierbraid1}
  \mathfrak{F}_{\text{s}}\left(\graa{b+}\right)=\graa{b-}\;,
  \qquad
%\ee
%\begin{align}\label{fourierbraid2}
  \mathfrak{F}_{\text{s}}\left(\graa{b-}\right)=\graa{b+}\;.
\ee
\end{proposition}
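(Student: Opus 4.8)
The plan is to reduce everything to the action of $\mathfrak{F}_{\text{s}}$ on the twisted‑product basis $\left\{\graa{i=-i}\right\}_{i\in\mathbb{Z}_N}$ of the zero‑graded $2$‑box space, where that action is given explicitly by the discrete Fourier transform of Proposition~\ref{Prop:SFT=FT}, and then to recognize the resulting coefficients as the Gauss sum evaluated in the proof of Proposition~\ref{gauss sum}. Since $\mathfrak{F}_{\text{s}}$ is linear, it suffices to expand each braid in this basis, transform term by term, and recollect.

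\textbf{Main computation.}
First I would write the two braids in the twisted‑product basis using \eqref{b-2} and \eqref{b+2}, so that (up to the bookkeeping of the $\omega^{\pm1/2}$ prefactors) $\graa{b+}$ and $\graa{b-}$ are proportional to $\sum_{i}\zeta^{\pm i^2}\graa{i=-i}$. Applying $\mathfrak{F}_{\text{s}}$ and inserting \eqref{Fouriertransform} produces a double sum; after interchanging the order of summation the inner sum over $i$ becomes $\sum_{i=0}^{N-1}\zeta^{i^2}q^{ij}$. By the computation in the proof of Proposition~\ref{gauss sum}, with $f(i)=\zeta^{i^2}$ one has $\frac{1}{\sqrt N}\sum_i q^{ij}\zeta^{i^2}=\omega\,\zeta^{-j^2}$, so this inner sum equals $\sqrt N\,\omega\,\zeta^{-j^2}$. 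Substituting and cancelling one factor of $\sqrt N$ turns the transform of $\graa{b+}$ into a scalar multiple of $\sum_j \zeta^{-j^2}\graa{j-j}$, which matches the expansion \eqref{b-2} of $\graa{b-}$ once one identifies the output vector $\graa{j-j}$ of \eqref{Fouriertransform} with the twisted‑product vector $\graa{i=-i}$ at $i=j$. The stray factor $\omega$ coming from the Gauss sum combines with the braid prefactors, and using $\abs{\omega}=1$ the powers of $\omega$ collapse to exactly the prefactor of $\graa{b-}$.

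\textbf{Second identity and the obstacle.}
For $\mathfrak{F}_{\text{s}}\!\left(\graa{b-}\right)=\graa{b+}$ I would run the identical computation starting from \eqref{b-2}, where the inner sum is instead $\sum_i \zeta^{-i^2}q^{ij}$, i.e. the Fourier transform of $g(j)=\zeta^{-j^2}$; the same lemma gives $\frac{1}{\sqrt N}\sum_i q^{ij}\zeta^{-i^2}=\overline{\omega}\,\zeta^{j^2}$. Here the conjugate Gauss sum yields a factor $\overline{\omega}$, and one again uses $\abs{\omega}=1$, i.e. $\overline{\omega}=\omega^{-1}$, so that the $\omega$‑powers match those of $\graa{b+}$ in \eqref{b+2}. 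I expect the only real obstacle to be the bookkeeping rather than any genuine difficulty: one must keep straight the three tensor conventions $\otimes_+,\otimes_-,\otimes_t$ (so the factors $\zeta^{\pm i^2}$ relating $\graa{i+-i}$, $\graa{i--i}$ and $\graa{i=-i}$ are inserted with the correct signs), track the half‑integer powers $\omega^{\pm1/2}$ through the forward versus conjugate Gauss sums, and apply $\mathfrak{F}_{\text{s}}$ in the correct (anti)clockwise sense. Once these signs and prefactors are pinned down, both displayed identities in \eqref{fourierbraid1} follow by direct substitution.
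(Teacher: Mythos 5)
Your proposal is correct and takes essentially the same route as the paper: the paper's proof consists precisely of citing the Gauss-sum computation of Proposition~\ref{gauss sum} (namely $\frac{1}{\sqrt N}\sum_i q^{ij}\zeta^{i^2}=\omega\,\zeta^{-j^2}$ and its conjugate), applied term by term after expanding the braids via \eqref{b+2} and \eqref{b-2} and transforming each basis vector with Proposition~\ref{Prop:SFT=FT}, which is exactly your calculation. The bookkeeping of the $\zeta^{\pm i^2}$ and $\omega^{\pm\frac{1}{2}}$ factors that you flag is indeed the only delicate point, and you resolve it the same way the paper does, by letting the extra factor $\omega$ (respectively $\overline{\omega}=\omega^{-1}$) from the Gauss sum convert one braid's prefactor into the other's.
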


\begin{proof}
The computation has been done in the proof of Proposition \ref{gauss sum}
\iffalse
\begin{align*}
\mathfrak{F}_{\text{s}}(\graa{b+})
&=\frac{\omega^{-\frac{1}{2}}}{\sqrt{N}}\sum_{i=0}^{N-1} \zeta^{i^2}  \mathfrak{F}_{\text{s}}(\graa{i=-i}) && \text{by Equation} \ref{b+2}\\
&=\frac{\omega^{-\frac{1}{2}}}{N}\sum_{i,j=0}^{N-1} \zeta^{i^2} q^{ij}  \graa{j-j} &&  \text{by Equation} \ref{Fouriertransform}\\
&=\frac{\omega^{-\frac{1}{2}}}{N}\sum_{i,j=0}^{N-1} \zeta^{(i+j)^2} \zeta^{-j^2}  \graa{j-j}\\
&=\frac{\omega^{\frac{1}{2}}}{\sqrt{N}}\sum_{j=0}^{N-1}  \zeta^{-j^2}  \graa{j-j}\\
&=\graa{b-} &&  \text{by Equation} \ref{b-2}
\end{align*}
The other equation can be proved similarly.
\fi
\end{proof}

Recall that $\mathfrak{F}_{\text{s}}\left(\graa{i=-i}\right)=\graa{p-i}$. Thus
\begin{align}
\graa{b-}&=\frac{\omega^{-\frac{1}{2}}}{\sqrt{N}}\sum_{i=0}^{N-1}\zeta^{i^2}\graa{p-i}\;,\label{b-3}\\
\graa{b+}&=\frac{\omega^{\frac{1}{2}}}{\sqrt{N}}\sum_{i=0}^{N-1}\zeta^{-i^2}\graa{p-i}\;.\label{b+3}
\end{align}
Therefore the braids are unitary and we have the Reidemeister move of type II:
\begin{align}\label{moveII}
\graa{MoveII}&=\graa{I2}.
\end{align}
Moreover, we have the following Reidemeister moves of type I:
\begin{align*}
\graa{MoveI2}&=\omega^{\frac{1}{2}}\graa{id}\;,\quad\qquad
\graa{MoveI}=\omega^{-\frac{1}{2}}\graa{id}\;.
%\graa{yb1}&=\graa{yb2}.
\end{align*}
The Reidemeister move of type III is also known as the Yang-Baxter equation:
\begin{align*}
\graa{yb1}\ &=\ \graa{yb2}\ .
\end{align*}
This is a consequence of:

\begin{theorem}[\bf Braid-Parafermion Relation]
We have the relation:
\begin{align}
\graa{1b+}&=\graa{b+1}\;. \label{braidparafermion}
\end{align}
\end{theorem}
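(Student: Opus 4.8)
The plan is to reduce the relation to the behaviour of a single parafermion generator under the Gaussian superposition defining the braid, and then to match the two sides by a shift of the summation index. I would start from the expansion of the positive braid as a weighted sum of twisted tensor products in Equation \eqref{b+2},
$$\graa{b+}=\frac{\omega^{\frac{1}{2}}}{\sqrt N}\sum_{i=0}^{N-1}\zeta^{i^{2}}\graa{i=-i}\;,$$
so that, term by term, each of $\graa{1b+}$ and $\graa{b+1}$ becomes an elementary two-string diagram carrying $c^{i}$ and $c^{-i}$, with one extra through-string labelled by $c$ inserted on the left, respectively on the right, of the crossing.

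The core step is to transport the inserted parafermion $c$ across the two labelled strands of each summand using the para isotopy axiom (iv). Passing $c$, which has grade $1$, across a strand of grade $g$ contributes the character $\chi(1,g)=q^{g}$, so moving it through the pair $(c^{i},c^{-i})$ produces a net phase determined by $i$, together with the sign recorded by whether each crossing is read as $\otimes_{+}$ or $\otimes_{-}$. Pulling the strand through the crossing relabels the summand indexed by $i$ as one indexed by the shifted value $i\pm1$; the Gaussian weight then absorbs this shift, since $\zeta^{(i\pm1)^{2}}=\zeta^{i^{2}}\,q^{\pm i}\,\zeta$ and $\zeta^{2}=q$, so that the accumulated para-isotopy phases cancel exactly against the change of coefficient under $i\mapsto i\pm1$. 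After relabelling the summation variable, the transported left-hand sum coincides with the right-hand sum, which is the asserted identity \eqref{braidparafermion}.

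I expect the bookkeeping of phases to be the main obstacle. One must track the $\otimes_{+}$ versus $\otimes_{-}$ conventions, and hence the correct sign of each $q^{\pm i}$, and verify that the residual factor $\zeta$ produced by the index shift is consistent with the convention $\zeta^{N^{2}}=1$ in both the odd and even cases of Equation \eqref{ZetaChoice}; the leftover scalar must reproduce the prefactor, which is guaranteed because $\omega$ is invariant under the string Fourier transform by Propositions \ref{gauss sum} and \ref{fourierbraid}. If this direct route becomes unwieldy, an alternative is to apply the anti-clockwise $\tfrac{\pi}{2}$ rotation and use Proposition \ref{rotation isotopy} together with the identity $\mathfrak{F}_{\text{s}}\!\left(\graa{i=-i}\right)=\graa{p-i}$, which reduces the claim to the rotation behaviour of the braids already established in Proposition \ref{fourierbraid}. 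Finally, once the relation holds for $b_{+}$ one obtains the analogous statement for $b_{-}$ by applying $*$ or $\Theta$, after which the Yang–Baxter equation (Reidemeister move III) follows formally, as indicated after the statement.
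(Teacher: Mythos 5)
Your overall skeleton---expand $b_+$ as a sum over $\mathbb{Z}_N$ and absorb the inserted generator by translating the summation index---is indeed the mechanism of the paper's proof, but your execution has two genuine gaps. The first is a misreading of the statement. In \eqref{braidparafermion} the generator $c$ is not an ``extra through-string'' placed to the left, respectively to the right, of the crossing: it is a label carried by \emph{one of the two strands of the crossing itself}, sitting below the crossing on one side of the identity and above it on the other, so that the claim is the bead-sliding identity $b_+\,(c\otimes I_1)=(I_1\otimes c)\,b_+$ (this is what ``the generator $c$ can move under the string'' means). The distinction is fatal for your core step: para isotopy (axiom (iv)) only reorders the relative \emph{heights} of labels on different strings; it can never transport a separate $c$-labelled string across the strands of a tangle, and in fact $c\otimes b_+\neq b_+\otimes c$ in $PF_\bullet$ (expand both sides in the basis of Lemma \ref{basis}: the label $c$ sits on different strings). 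What actually produces the shift $i\mapsto i\pm1$ is not para isotopy at all, but multiplication of the label $c$ into the label $c^{\pm i}$ \emph{along the same strand}, i.e.\ composition of labelled tangles; that step only exists under the bead-on-a-strand reading.

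The second gap is the phase bookkeeping, which you correctly identify as the crux but then do not carry out; and your safety net is a non sequitur, since the invariance of $\omega$ under the SFT (Propositions \ref{gauss sum} and \ref{fourierbraid}) concerns the unlabelled braid and has no bearing on whether a residual $\zeta$ survives an index shift. The danger is real: as printed, \eqref{b+1} and \eqref{b+2} are mutually inconsistent (their Gaussian weights are complex conjugates of one another), and the relation \eqref{braidparafermion} distinguishes between the two normalizations: with the height conventions that make the paper's computation work, the merge-and-shift scheme starting from \eqref{b+2} leaves mismatched $i$-dependent phases (of the form $q^{i-1}$ against $q^{-i}$) that no overall constant can repair. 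The paper's proof avoids all of this by expanding through \eqref{b+1}: there every summand $c^i\otimes_-c^{-i}$ carries the \emph{same} constant coefficient, the height convention of $\otimes_-$ lets the incoming $c$ merge into $c^{i}$ and the outgoing $c$ merge into $c^{-i}$ with no para-isotopy phase whatsoever, and the identity is then literally the substitution $i\mapsto i-1$ in $\mathbb{Z}_N$. (Your fallback also fails: applying the $\frac{\pi}{2}$ rotation to \eqref{braidparafermion} does not reduce it to the statement $\mathfrak{F}_{\text{s}}(b_+)=b_-$ of Proposition \ref{fourierbraid}, which contains no labelled strand.) If you insist on the Gaussian form, you must first fix its normalization to agree with \eqref{b+1} and then track the para-isotopy phases to the very end, checking that they cancel exactly, with no leftover $\zeta$, against the shifted Gaussian; this can be done, but it is a strictly longer road to what is, in the paper, a two-line argument.
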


\begin{proof}
By Equation \ref{b+1},
\begin{align*}
  \graa{1b+}&=\frac{\omega^{\frac{1}{2}}}{\sqrt{N}} \sum_{i=0}^{N-1}\graa{i+1i}
            =\frac{\omega^{\frac{1}{2}}}{\sqrt{N}} \sum_{i=0}^{N-1}\graa{ii+1}
            =\graa{b+1}\;.
\end{align*}
Here we translate the sum in $\mathbb{Z}_{N}$.
\end{proof}
\goodbreak

The braid-parafermion relation \ref{braidparafermion} says that the generator $c$ can move under the string. Combining this with the Reidemeister move of type II in \eqref{moveII}, any $m$-box $x$ can move under the string:
\begin{align}\label{underflat}
  \graa{flat1}&=\graa{flat2}\;.
\end{align}
Therefore the strings can be lifted to the three dimensional space acting over the planar para algebra.
We call this property of the parafermion planar para algebra $PF_{\bullet}$ the {\it half-braided} property.
We call a planar algebra {\it braided}, if the string acts both over and under it.

\begin{definition}
An unshaded planar (para) algebra is called half braided, if there are (zero-graded) 2-boxes $\graa{b+}$ and $\graa{b-}$, such that
Equations \eqref{fourierbraid1}, \eqref{moveII} hold, and for any $m$-box $x$ Equation \eqref{underflat} holds.
Furthermore, it is called braided, if Equation \eqref{underflat} holds while switching $\graa{b+}$ to $\graa{b-}$.
\end{definition}

\begin{remark}
The zero graded part of the parafermion planar para algebra $PF_{\bullet}$ is the group $\mathbb{Z}_N$ subfactor planar algebra $P^{\mathbb{Z}_N}$. It is generated by 2-boxes $\left\{\graa{i=-i}\right\}_{i\in\mathbb{Z}_N}$ which form the group $\mathbb{Z}_N$.
The bosonic generator $\graa{i=-i}$ is decomposed as the twisted tensor product of the parafermion $c^i$ and its antiparticle $\Theta(c^i)$. We interpret the decomposition as the parasymmetry of the parafermion planar para algebra $PF_{\bullet}$.
The proof the Yang-Baxter equation takes advantage of the parasymmetry.
\end{remark}

\begin{theorem}
The string moves under the zero-graded planar subalgebra $P^{\mathbb{Z}_N}$ of $PF_{\bullet}$ as a $\mathbb{Z}_2$ flip:
  $$\graa{flip1}=\graa{flip2}\;.$$
\end{theorem}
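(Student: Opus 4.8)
The plan is to reduce the statement to the group generators and then exploit the parafermionic decomposition recorded in the preceding remark. Since $P^{\mathbb{Z}_N}$ is generated as an algebra by the zero-graded $2$-boxes $\graa{i=-i}$, $i\in\mathbb{Z}_N$, which close under multiplication to form the group $\mathbb{Z}_N$ via $\graa{iijj}=\graa{i+ji+j}$, and since transporting a string past a product of boxes is the composite of transporting it past each factor in turn (this is the functorial content of the half-braided relation \eqref{underflat}), both sides of the asserted identity are multiplicative in the box being moved. As the $\mathbb{Z}_2$ flip is the inversion automorphism $i\mapsto -i$, which is multiplicative because $\mathbb{Z}_N$ is abelian, it therefore suffices to verify $\graa{flip1}=\graa{flip2}$ for a single generator $\graa{i=-i}$.

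For a single generator I would use its presentation as the twisted tensor product $\zeta^{-i^2}(c^i\otimes_+ c^{-i})$, that is, as the parafermion $c^i$ on the left leg twisted against its antiparticle $\Theta(c^i)$ on the right leg. Transporting the string past this box then amounts to transporting it, in succession, past $c^i$ and past $c^{-i}$, and for each individual parafermion the Braid--Parafermion relation \eqref{braidparafermion} prescribes exactly how $c$ migrates across the string. Iterating this relation along each leg moves the whole generator to the far side of the string; the essential geometric point is that in doing so the string passes \emph{between} the two legs, so that the left leg carrying $c^i$ and the right leg carrying $c^{-i}$ emerge with their relative order reversed. Reassembling them by para isotopy then turns $c^i\otimes c^{-i}$ into (a scalar times) $c^{-i}\otimes c^{i}$, which is precisely the generator $\graa{i=-i}$ with $i$ replaced by $-i$; this is the source of the $\mathbb{Z}_2$ inversion.

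The substance of the argument is the bookkeeping of the scalar phases, and this is where I expect the main obstacle to lie. Each application of \eqref{braidparafermion} and each para-isotopy interchange contributes powers of $\zeta$ and $q$, and I must check that they collapse to exactly the twisting scalar attached to the flipped generator. The decisive cancellations are that the twist $\zeta^{\mp i^2}$ in the definition of $\graa{i=-i}$ is invariant under $i\mapsto -i$ since $\zeta^{i^2}=\zeta^{(-i)^2}$, and that the para-isotopy factor $q^{\,i(-i)}=q^{-i^2}$ produced when interchanging the two legs is precisely what reconciles $\otimes_+$ with $\otimes_-$ and reverses the grading, consistently with $q=\zeta^2$. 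Once these $\zeta$- and $q$-phases are shown to telescope to $1$, the identity $\graa{flip1}=\graa{flip2}$ follows; thus the difficulty is not conceptual but the careful tracking of phases through the sequence of braid moves and para isotopies, together with making the "leg-swap" realizing group inversion fully precise diagrammatically.
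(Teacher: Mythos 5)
The decisive step in your argument---``for each individual parafermion the Braid--Parafermion relation \eqref{braidparafermion} prescribes exactly how $c$ migrates across the string''---applies that relation to a crossing where it does not hold. Relation \eqref{braidparafermion} is a statement about the braid of Equation \eqref{b+1}, i.e.\ about the crossing in which the string passes \emph{over} the parafermion; it is precisely what yields the half-braided property \eqref{underflat}. The theorem you are proving concerns the opposite crossing: here the string passes \emph{under} the zero-graded box, so each leg of the generator meets the string in a crossing of the type \eqref{b-1}, and the paper proves no sliding relation for that crossing. Nor can one hold: by the paper's own definition, if labels slid transparently through both types of crossings, $PF_{\bullet}$ would be braided rather than merely half-braided, and the theorem at hand would assert the \emph{trivial} flip---false for $N>2$, since the generator graded by $i$ and the one graded by $-i$ are distinct basis elements of the zero-graded $2$-box space. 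The asymmetry between the two crossings is the very content of the theorem (see the corollary that only the $\mathbb{Z}_2$-fixed subalgebra is braided), so it cannot be taken as an input to its proof.

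Two further points. First, your geometric mechanism---the string ``passes between the two legs'' so that they emerge in reversed order---is not what happens: when a box slides past a string its legs keep their order, left staying left and right staying right; the proposed leg swap produces the correct answer on generators only by accident, because exchanging the two legs of the generator graded by $i$ happens to coincide with $i\mapsto -i$. Second, the phase bookkeeping you postpone is not a residual check but the entire proof. The paper argues exactly there: it expands the two crossings of the diagram via \eqref{b+2} into a double sum over $(i,j)\in\mathbb{Z}_N\times\mathbb{Z}_N$, uses para isotopy to exhibit an explicit factor $q^{j-i}$, performs the same expansion on the flipped diagram, and concludes by the substitution $(i,j)\mapsto(i+1,j+1)$, under which $q^{j-i}$ is invariant and the two sums are identified. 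Your reduction to the group generators is harmless (indeed linearity alone suffices, since these generators span the zero-graded $2$-box space), but the core of the argument, as it stands, fails at the crossing step.
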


\begin{proof}
By Equation \ref{b+2},
\begin{align*}
  \graa{flip1}&=\frac{\omega^{-1}}{N}\sum_{i,j=0}^{N-1}\graa{overflat3}\\
            &=\frac{\omega^{-1}}{N}\sum_{i,j=0}^{N-1}q^{j-i}\graa{overflat4}\;.
\end{align*}

\begin{align*}
  \graa{flip2}&=\frac{\omega^{-1}}{N}\sum_{i,j=0}^{N-1}\graa{overflat5}\\
            &=\frac{\omega^{-1}}{N}\sum_{i,j=0}^{N-1}q^{j-i}\graa{overflat6}\;.
\end{align*}
Substitute $i,j$ by $i+1,j+1$ in the above equation. Then we have that
  $$\graa{flip1}=\graa{flip2}\;.$$
\end{proof}

\begin{corollary}
  The $\mathbb{Z}_2$ fixed point planar subalgebra of $P^{\mathbb{Z}_N}$ is braided.
\end{corollary}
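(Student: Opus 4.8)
The plan is to leverage the preceding theorem, which compares the two sides on which a string can pass a zero-graded box. That theorem shows that, on the zero-graded planar subalgebra $P^{\mathbb{Z}_N}$, pushing a string past an element from one side versus the other differs precisely by the $\mathbb{Z}_2$ automorphism $\sigma$ induced by the inversion $i\mapsto -i$ of the generating group $\mathbb{Z}_N$. The corollary then follows because $\sigma$ acts as the identity on the fixed-point subalgebra, so the two moves agree there, and braiding is exactly the statement that both moves are available.

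First I would recall that $PF_\bullet$ is half-braided: by \eqref{underflat}, every $m$-box $x$ can be pushed under the string using the braid $\graa{b+}$, and this holds in particular for every $x$ in the $\mathbb{Z}_2$ fixed-point subalgebra of $P^{\mathbb{Z}_N}$. Thus one half of the braided axiom is automatic, and what remains is to verify \eqref{underflat} with $\graa{b+}$ replaced by $\graa{b-}$, namely that such an $x$ can also be pushed over the string.

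Next I would invoke the preceding theorem in the form that the $\graa{b-}$-move of the string past an element $g$ of $P^{\mathbb{Z}_N}$ coincides with the $\graa{b+}$-move past $\sigma(g)$. Restricting to the fixed-point subalgebra, where $\sigma(g)=g$ for every element $g$, the two moves coincide; combined with the already-established relation \eqref{underflat}, this yields \eqref{underflat} verbatim with $\graa{b-}$ in place of $\graa{b+}$. By the definition of braided, the fixed-point subalgebra $(P^{\mathbb{Z}_N})^{\mathbb{Z}_2}$ is therefore braided.

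The main obstacle, and the point to pin down carefully, is matching conventions: one must confirm that the $\mathbb{Z}_2$ flip appearing in the preceding theorem is exactly the group-inversion automorphism $\sigma$, that $\sigma$ acts on all the box spaces of $P^{\mathbb{Z}_N}$ by planar-algebra automorphisms so that its fixed-point set is a genuine planar subalgebra closed under the tangle action, and that the over-versus-under distinction is paired with $\graa{b-}$ versus $\graa{b+}$ in the direction demanded by the definition of braided. Once the flip is identified with $\sigma$ and seen to be trivial on fixed points, the reduction from half-braided to braided is immediate and needs no further diagrammatic computation.
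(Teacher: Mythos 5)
Your proof is correct and takes essentially the same approach the paper intends: the corollary appears there without an explicit proof precisely because it is the immediate consequence you spell out — half-braiding (Equation \eqref{underflat}) supplies the under-move for every box, and the preceding theorem's $\mathbb{Z}_2$ flip is the identity on the fixed-point subalgebra, so the over-move (\eqref{underflat} with $\graa{b+}$ replaced by $\graa{b-}$) holds there as well, which is the definition of braided. The caveats you flag (identifying the flip with the inversion $i\mapsto -i$ on $\mathbb{Z}_N$, and checking that the fixed points form a genuine planar subalgebra since the flip is a planar-algebra automorphism) are sound and resolve in the expected way.
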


\begin{corollary}\label{overflat2}
Any element $x$ in $P^{\mathbb{Z}_N}$ can move above double strings.
  $$\graa{overflat1}=\graa{overflat2}\;.$$
\end{corollary}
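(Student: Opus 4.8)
The plan is to deduce this from the preceding $\mathbb{Z}_2$-flip theorem by passing the two strings over $x$ one at a time. Recall that $P^{\mathbb{Z}_N}$ is generated, as an algebra under the $2$-box multiplication, by the bosonic $2$-boxes $\graa{i=-i}$, $i\in\mathbb{Z}_N$, which realize the group $\mathbb{Z}_N$. The flip supplied by that theorem is the inversion automorphism $i\mapsto -i$ of this group; being an automorphism of the generating set that respects the $2$-box product, it extends to an involutive algebra automorphism $\sigma$ of $P^{\mathbb{Z}_N}$ with $\sigma^2=\mathrm{id}$, whose fixed points are exactly the self-inverse elements appearing in the previous corollary.

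First I would reduce to the case of a single generator. Passing a string over a vertically stacked product of boxes is the same as passing it over each factor in turn, since this is a local move carried out in a neighborhood of each box, and it is linear in the label. Hence, if the double-string move holds for each generator $\graa{i=-i}$, it holds for an arbitrary product of generators and, by linearity, for every $x\in P^{\mathbb{Z}_N}$. The companion statement that a single string moves \emph{under} an arbitrary $x$ is already available from the half-braided property, Equation~\eqref{underflat}.

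Next I would pull the double string over a single generator by applying the single-string theorem twice. The first crossing over $\graa{i=-i}$ replaces it by its image under the flip, namely the generator with $i$ replaced by $-i$; the second crossing applies the flip once more. Since $\sigma^2=\mathrm{id}$, the composite of the two crossings acts trivially on the label, so the generator re-emerges unchanged above the two strings. This is precisely the asserted identity $\graa{overflat1}=\graa{overflat2}$ on generators, and by the reduction above it extends to all of $P^{\mathbb{Z}_N}$.

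The main obstacle is bookkeeping rather than computation: one must check that the two successive single-string moves are geometrically composable, i.e. that after the first crossing the diagram is again in the standard form to which the theorem applies, and that the ``pass over a product'' reduction is genuinely multiplicative. Both points reduce to local isotopies together with the braid-parafermion relation \eqref{braidparafermion} and the type-II move \eqref{moveII}; once these are in place, the involutivity $\sigma^2=\mathrm{id}$ does all the real work and no further calculation is needed.
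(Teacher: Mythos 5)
Your argument is essentially the paper's: the corollary is stated there with no separate proof precisely because it is an immediate consequence of the preceding $\mathbb{Z}_2$-flip theorem --- passing $x$ over each of the two parallel strings applies the flip once, and the flip is an involution, so the double crossing acts trivially. Your proposal identifies exactly this mechanism, and the supporting facts you invoke (the half-braided property \eqref{underflat}, the type-II move \eqref{moveII}) are the right ones.

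One correction to your scaffolding, though. You justify the reduction to generators by claiming that $P^{\mathbb{Z}_N}$ is generated by the bosonic $2$-boxes under the $2$-box multiplication. That is not what the paper's remark asserts: products and linear combinations of those $2$-boxes exhaust only the zero-graded $2$-box space, i.e.\ the group algebra of $\mathbb{Z}_N$, whereas $P^{\mathbb{Z}_N}$ is generated by them \emph{as a planar algebra} and contains $m$-boxes for every $m$; the corollary, like Equation \eqref{underflat}, concerns arbitrary $m$-boxes. So your reduction, as justified, would establish the statement only for zero-graded $2$-boxes, and extending it honestly would require handling tensor products, cups and caps as well, not just vertical stacking. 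The gap is harmless in the end, because the flip theorem is itself stated for the whole zero-graded planar subalgebra: apply it twice directly to an arbitrary $x\in P^{\mathbb{Z}_N}$, and the involutivity of the flip finishes the argument with no reduction to generators at all --- which is exactly how the paper intends the corollary to be read.
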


Therefore the even, zero-graded part of $PF_{\bullet}$ can move both above and under double strings,
and we recover the modular tensor category $Rep(\mathbb{Z}_N)$.
The simple objects are given by projections $\left\{\graa{p-i}\right\}_{i\in\mathbb{Z}_N}.$ The morphisms are given by zero-graded elements of $PF_{\bullet}$. The braids are derived from $\graa{b+}$ given in \eqref{b+2}. The multiplication and the tensor product are given by the action of corresponding tangles in Section \ref{section: planar para algebra}.
Moreover, $PF_{\bullet}$ turns out to be a module category over the modular tensor category. We refer the readers to \cite{Lon95,Xu98,BocEva98,Ocn00,Ost03} on the general theory of module categories over modular tensor categories.

One can relate our braid \eqref{b+2} to the solution \cite{FatZam82} with parameter $\la$ in the limit $\la\to-i\infty$, by taking $\zeta=-e^{\frac{\pi i}{N}}$.
Thus this braid can be ``Baxterized'' in the sense of Jones \cite{Jon91}. The braid has been studied previously in \cite{GolJon89}. 
In the case $N=2$, the braid gives the Jones polynomial 
associated with the index-two subfactor \cite{Jon85}.  
The case $N=5$ was studied extensively in \cite{Jon89}. The braid gives a Kauffman polynomial \cite{Kau90}, that is associated with a certain Birman-Wenzl-Murakami algebra \cite{BirWen,Mur87}.
We refer the reader to a recent survey of related work in \cite{AYP16}.

In another work, we discuss applications of the braid relations \eqref{b-1}--\eqref{b+2} for arbitrary $N$ to quantum information \cite{JafLiuWoz}.  

\setcounter{equation}{0}
\section{Clifford group}

In the $(QI,q)$ four string model of \S\ref{Sect:QIq4string}, the basis vector $\ket{k}$ is represented by a pair of caps with opposite grading. By Proposition \ref{Prop:SFT=FT}, the  SFT $\FS$ acting on the pair of caps is the discrete fourier transform $F$
\be
F\ket{k}=\frac{1}{\sqrt{N}}\sum_{i=0}^{N-1} q^{kl}\ket{l} \;.
\ee
The SFT on one cap gives the Gaussian $G$.
\be
G\ket{k}=\zeta^{k^2}\ket{k} \;.
\ee
We use the notation for Pauli $X$, $Y$, $Z$ defined in this model.
Then $X,Y,Z,F,G$ satisfy the following relations: 
\beq
	X^N=Y^N=Z^N=1\;, \\
	XY=q\,YX\;,\quad
	YZ=q\,ZY\;, \quad
	ZX=q\,XZ\;, \\
XYZ=\zeta \;; \\
\nonumber\\
FXF^{-1}=Z \;, 
FZF^{-1}=X^{-1} \;, \\
GXG^{-1}=Y^{-1}(=\zeta XZ) \;, 
GZG^{-1}=Z \;;\\
\nonumber\\
(FG)^3=\omega \;, \\
F^4=1 \;, \\
G^N=1 \;, \\
F^2 G= GF^2 \;.
\eeq

The Pauli matrices $X,Y,Z$ generate a projective linear group $(\Z_N)^2$. 
The Clifford group is defined to be the normalizer group of Pauli matrices.
If we express the element $X^iZ^j$ in $(\Z_N)^2$ as a vector 
        $\left(\begin{array}{c}
        i  \\
        j 
        \end{array}\right)$        
Then, $Ad_F=S=\left[\begin{array}{cc}
        0 & -1 \\
        1 & 0
      \end{array}\right]$
and   $Ad_G=T=\left[\begin{array}{cc}
        1 & 1 \\
        0 & 1
      \end{array}\right]$.
      
Suppose a unitary $U$ is in the Clifford group. 
If $Ad_U$ fixes $X$ and $Z$ projectively, then one can show that $U=X^iZ^j$ projectively by a simple computation in $N \times N$ matrices.

In general, $Ad_U$ acts as an element in $GL(2,\Z_{N})$.
Note that $Ad_{F},Ad_{G}$ generate  $SL(2,\Z_{N})$.
One can prove that $\left[\begin{array}{cc}
        1 & 0 \\
        0 & -1
      \end{array}\right]$ can not be realized as $Ad_U$ by a simple computation in $N \times N$ matrices. 
Therefore, we have the following proposition:      
\begin{proposition}
The projective linear group generated by $X,Y,Z,F,G$ is the Clifford group.  This group is $(\Z_N)^2 \rtimes SL(2,\Z_N)$.
\end{proposition}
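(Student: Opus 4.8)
The plan is to realize the Clifford group $C$ as the normalizer of the Pauli group $P=\langle X,Z\rangle$, which is $(\Z_{N})^{2}$ after projectivizing, and to analyze the conjugation homomorphism $\phi\colon U\mapsto Ad_{U}$. First I would check the inclusion $\Gamma:=\langle X,Y,Z,F,G\rangle\subseteq C$: the relations $FXF^{-1}=Z$, $FZF^{-1}=X^{-1}$, $GXG^{-1}=\zeta XZ$, and $GZG^{-1}=Z$ show that $F$ and $G$ normalize $P$, while $X,Y,Z$ lie in $P$ itself, so every generator of $\Gamma$ normalizes $P$ and $\Gamma\subseteq C$.

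Next I would pin down the image of $\phi$. Since conjugation preserves the commutator pairing $(X^{a}Z^{b})(X^{c}Z^{d})=q^{\,bc-ad}(X^{c}Z^{d})(X^{a}Z^{b})$, which is a central scalar, every $Ad_{U}$ preserves the associated symplectic form on $(\Z_{N})^{2}$; equivalently $\det Ad_{U}=1$, so $\phi(C)\subseteq SL(2,\Z_{N})$. This is exactly the content of the already-established fact that the determinant-$(-1)$ reflection $\begin{pmatrix}1&0\\0&-1\end{pmatrix}$ is never realized as an $Ad_{U}$. In the opposite direction $Ad_{F}=S$ and $Ad_{G}=T$ generate $SL(2,\Z_{N})$, so $\phi(\Gamma)=SL(2,\Z_{N})=\phi(C)$.

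I would then compute $\ker\phi$. If $Ad_{U}$ is trivial then in particular it fixes $X$ and $Z$ projectively, so by the cited computation $U=X^{i}Z^{j}$ projectively; hence $\ker\phi=P=(\Z_{N})^{2}$. This yields the short exact sequence $1\to(\Z_{N})^{2}\to C\xrightarrow{\phi}SL(2,\Z_{N})\to 1$, in which the conjugation action of $SL(2,\Z_{N})$ on the kernel is the defining linear action on $(\Z_{N})^{2}$. A quick diagram chase now gives $\Gamma=C$: for $U\in C$ choose a word $w=w(F,G)$ with $\phi(w)=\phi(U)$; then $Uw^{-1}\in\ker\phi=P\subseteq\Gamma$, whence $U\in\Gamma$.

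Finally, to promote the extension to the semidirect product $(\Z_{N})^{2}\rtimes SL(2,\Z_{N})$, I would exhibit a splitting $s\colon SL(2,\Z_{N})\to C$ with $s(S)=F$ and $s(T)=G$. The relations $F^{4}=1$, $G^{N}=1$, $(FG)^{3}=\omega$, and $F^{2}G=GF^{2}$ — whose right-hand sides are scalars and hence trivial projectively — show that the projective subgroup $\langle F,G\rangle$ is a homomorphic image of $SL(2,\Z_{N})$; since $\phi$ already maps $\langle F,G\rangle$ onto $SL(2,\Z_{N})$, this forces $\langle F,G\rangle\cong SL(2,\Z_{N})$ and $\langle F,G\rangle\cap P=\{1\}$, i.e.\ a complement to the normal subgroup $P$. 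The main obstacle is precisely this last step: one must verify that the listed relations constitute a complete projective presentation of $SL(2,\Z_{N})$, so that $\langle F,G\rangle$ does not collapse further, and one must handle the phase bookkeeping carefully (the Gauss sum $\omega$ and the two admissible choices of $\zeta$) when $N$ is even, where the metaplectic cocycle is most delicate. Everything upstream of the splitting is routine matrix computation given the facts already established.
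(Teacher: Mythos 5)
Everything you do up through $\Gamma=C$ is correct and is essentially the paper's own argument: your kernel computation and your determinant restriction are exactly the paper's two ``simple computations in $N\times N$ matrices,'' and your diagram chase is the intended conclusion. In fact your symplectic-form argument is the \emph{complete} version of a step the paper only gestures at: excluding the single reflection $\mathrm{diag}(1,-1)$ does not by itself confine the image of $Ad$ to $SL(2,\Z_N)$ (for $N=8$, say, $\det Ad_U$ could a priori range over the subgroup $\{1,3\}\subset(\Z_8)^*$, which avoids $-1$), whereas preservation of the commutation phase in $ZX=qXZ$ forces $\det Ad_U\equiv 1$ directly. So the first sentence of the proposition is fine on your route.

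The genuine gap is the splitting step, and it is unfixable as you propose, in two concrete ways. First, the verification you defer --- that $F^4=G^N=(FG)^3=1$ together with centrality of $F^2$ is a complete projective presentation of $SL(2,\Z_N)$ --- is not just unproven but \emph{false} for every $N\geq 6$: in the abstract group $Q$ on two generators with those relations, $F^2$ is central, and $Q/\langle F^2\rangle=\langle F,G \mid F^2=G^N=(FG)^3=1\rangle$ is the $(2,3,N)$ von Dyck triangle group, which is infinite as soon as $\tfrac12+\tfrac13+\tfrac1N\leq 1$. Hence those relations present an infinite group, not $SL(2,\Z_N)$; knowing that your concrete $\langle F,G\rangle$ satisfies them exhibits it only as a quotient of an infinite group, so the order-counting step (``a homomorphic image of $SL(2,\Z_N)$ that surjects onto $SL(2,\Z_N)$ is $SL(2,\Z_N)$'') never gets started. (The same observation shows the paper's closing assertion that these relations are ``complete'' cannot be correct for $N\geq 6$, so this route cannot be salvaged by citing the paper.) Second, for even $N$ your candidate complement provably is not one: the paper's own normalization forces $\zeta^N=-1$ in the even case, so $G^N\ket{k}=(-1)^{k^2}\ket{k}=(-1)^k\ket{k}$, i.e.\ $G^N=Z^{N/2}$, a non-scalar Pauli element; thus $\langle F,G\rangle\cap P\neq\{1\}$ (and the listed relation $G^N=1$ itself holds only for odd $N$). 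What does work, for odd $N$: either invoke the Weil (metaplectic) representation, an honest linear representation $\rho$ of $SL(2,\Z_N)$ with $\rho(S)\sim F$ and $\rho(T)\sim G$ up to phase, whose projective image is a complement (if $\rho(g)$ is scalar then $g$ acts trivially on $(\Z_N)^2$, so $g=1$); or argue cohomologically that the extension splits because $-I$ is central in $SL(2,\Z_N)$ and acts as $-1$ on $(\Z_N)^2$, so every class in $H^2(SL(2,\Z_N),(\Z_N)^2)$ is killed both by $2$ and by $N$, hence vanishes when $N$ is odd. Be aware that the paper likewise asserts the semidirect-product structure without proving the splitting, so you have put your finger on a real gap in the source; but the repair you sketch is one that cannot be carried out, and the even case genuinely requires separate treatment.
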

As a consequence,  the above relations for $X,Y,Z,F,G$ are complete.
%
%\subsection{Multiple-qudit case}
%In the two string model, the string Fourier transformation on 2-qudit generates the controlled transformation $C_Z$.
%$C_Z=(GF^{-1}\otimes FG^{-1})\FS (1\otimes F^{-1}G^{-1})$.
%Thus, $X,Y,Z,F,G,\FS$ and $\sigma$ generated the $n$-qudit Clifford group.
%
%In particular, the string Fourier is topological quantum computer, as it generates any unitary transformations with 1-qudit transformations. 

\setcounter{equation}{0}
\section{Positivity for the General Circle Parameter\label{positivity2}}
We constructed the planar para algebra $\mathscr{P}_{\bullet}$ over the field $\mathbb{C}(\delta)$ in Section \ref{construction}. The $m$-box space has a sub algebra generated by labelled tangles with only vertical strings which is isomorphic to the parafermion algebra $PF_m$.

Similar to the Temperley-Lieb-Jones planar algebra case, we can construct matrix units of $\mathscr{P}_m$ over the field $\mathbb{C}(\delta)$ inductively by the matrix units of parafermion algebras constructed in Section \ref{Pictorial presentation},
the basic construction and the general Wenzl's formula \cite{Wen87,LiuYB}.
If a labelled tangle is not in the basic construction ideal, then it is in the parafermion algebra. Therefore, the principal graph of the planar para algebra $\mathscr{P}_{\bullet}$ is the same as the Bratteli diagram of parafermion algebras, i.e.,
$$\graa{pginf},$$
assuming the quantum dimensions of vertices in the principal graph are non-zero.
This assumption can be avoided by the bi-induction argument in \cite{LiuYB}.
Moreover, we obtain the formula of the quantum dimensions of these vertices.
There is one depth $2m$ vertex. Its quantum dimension is $\sqrt{N}[2m]$.
There are $N$ depth $2m+1$ vertices. Any of them has quantum dimension $\frac{\sqrt{N}}{N}[2m+1]$. Here $[m]$ is the quantum number $\frac{q^m-q^{-m}}{q-q^{-1}}$, and $\delta=\sqrt{N}[2]$.

Jones' remarkable rigidity theorem \cite{Jon83} says that all possible values of the circle parameter of a subfactor planar algebra are given by
$$\{2\cos\frac{\pi}{n} | n=3,4,\cdots \}\cup [2,\infty).$$
These values are realized by Temperley-Lieb-Jones subfactor planar algebras.

To obtain the positivity for the planar para algebra $\mathscr{P}_{\bullet}$, $\delta$ has to be positive. In this case, we can define the (unique) vertical reflection $*$ on the planar algebra induced by $c^*=c^{-1}$.

\begin{theorem}\label{Theorem: positivity general}
The planar para algebra $\mathscr{P}_{\bullet}$ has positivity if and only if $\frac{\delta}{\sqrt{N}}$ is in
$$\{2\cos\frac{\pi}{k} | k=3,4,\cdots \}\cup [2,\infty).$$
\end{theorem}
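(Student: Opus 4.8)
The plan is to reduce the positivity of $\mathscr{P}_{\bullet}$ to the positivity of the quantum dimensions attached to its principal graph, and then to identify the latter with Jones' rigidity constraint on the rescaled parameter $\delta/\sqrt{N}$. First I would recall that, since $\delta>0$, the vertical reflection $*$ induced by $c^*=c^{-1}$ is well defined, so that positivity of $\mathscr{P}_{\bullet}$ means precisely that the Markov trace form $tr(x^*y)$ is positive semidefinite on every box space $\mathscr{P}_{m,\pm}$. As noted in the excerpt, each $\mathscr{P}_m$ contains the parafermion algebra $PF_m$ spanned by labelled vertical strings, and the matrix units of $\mathscr{P}_m$ over $\mathbb{C}(\delta)$ are built inductively from the matrix units of $PF_m$ of Section \ref{Pictorial presentation}, together with the basic construction and the general Wenzl formula \cite{Wen87,LiuYB}. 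Because $PF_m$ is a faithfully traced $\mathbb{Z}_N$-graded matrix algebra, its own trace form is automatically positive; the only new directions introduced at each depth are governed by the Jones projections $E_i$ and the associated Jones--Wenzl idempotents.

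Next I would make the Gram matrix of the trace form explicit in the basis of matrix units. The inductive Wenzl construction diagonalizes this form, and its diagonal entries are exactly the quantum dimensions of the vertices of the principal graph. From the excerpt these dimensions are $\sqrt{N}\,[2m]$ at the unique even-depth vertex and $\tfrac{\sqrt{N}}{N}\,[2m+1]$ at each of the $N$ odd-depth vertices, where the quantum number satisfies $[2]=\delta/\sqrt{N}$. Consequently the trace form is positive semidefinite on all of $\mathscr{P}_{\bullet}$ if and only if $[n]\ge 0$ for every $n\ge 1$.

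Finally I would translate this into the stated interval condition. Writing $[2]=\delta/\sqrt{N}=2\cos\theta$, one has $[n]=\sin(n\theta)/\sin\theta$, so $[n]\ge 0$ for all $n$ exactly when $\theta=0$ (equivalently $\delta/\sqrt{N}\ge 2$) or $\theta=\pi/k$ for some integer $k\ge 3$ (in which case the graph truncates at depth $k$ with $[k]=0$). This is precisely Jones' rigidity theorem \cite{Jon83} applied with loop parameter $\delta/\sqrt{N}$, and it yields the set $\{2\cos\tfrac{\pi}{k}\mid k=3,4,\dots\}\cup[2,\infty)$. When the condition holds, quotienting $\mathscr{P}_{\bullet}$ by the kernel of the positive semidefinite trace form produces a subfactor planar para algebra, exactly as in the $\delta=\sqrt{N}$ case of Section \ref{positivity}.

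I expect the main obstacle to be the second step: verifying rigorously that the inductive Wenzl construction genuinely diagonalizes the trace form in the graded para setting, so that positivity decouples into the automatic positivity of $PF_m$ and the Temperley--Lieb--Jones positivity controlled solely by $[2]=\delta/\sqrt{N}$. This requires checking that the Jones--Wenzl recursion and the basic-construction tower behave identically to the ordinary ungraded case once one works with the effective loop parameter $\delta/\sqrt{N}$, with the bi-induction argument of \cite{LiuYB} handling those vertices whose quantum dimension might vanish.
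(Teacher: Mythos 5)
Your proposal is correct and follows essentially the same route as the paper: both reduce positivity to the non-negativity of the quantum dimensions $\sqrt{N}\,[2m]$ and $\tfrac{\sqrt{N}}{N}\,[2m+1]$ of the principal-graph vertices, via the matrix units built inductively from the parafermion matrix units, the basic construction, and Wenzl's formula, and then invoke Jones' rigidity with effective loop parameter $[2]=\delta/\sqrt{N}$ (including the quotient by the kernel of the trace form at the critical values $2\cos\tfrac{\pi}{k}$). The ``main obstacle'' you flag is exactly the point the paper disposes of in the paragraphs preceding the theorem, by citing the general Wenzl formula and the bi-induction argument of \cite{LiuYB}.
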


\begin{proof}
The matrix units of $\mathscr{P}_m$ are constructed over the field $\mathbb{C}(\delta)$. When $\delta$ is a scalar, the matrix units of $\mathscr{P}_m$ are well-defined by Wenzl's formula, if the Markov trace is non-degenerated on $\mathscr{P}_{m-1}$.

If $2\cos\frac{\pi}{k-1}<\delta<2\cos\frac{\pi}{k}$, then $[i]>0$ for all $i<k$. Thus the matrix units of $\mathscr{P}_k$ are still well-defined. Since $[k]<0$, the positivity fails.

If $\delta=2\cos\frac{\pi}{k}$, then $[i]>0$ for all $i<k$. Thus the matrix units of $\mathscr{P}_k$ are still well-defined. Since $[k]=0$, any minimal idempotent orthogonal to the basic construction ideal has trace 0. Thus it is in the kernel of the partition function. Therefore, $\mathscr{P}_{\bullet}$ modulo the kernel of the partition function is a depth $k-1$ subfactor planar para algebra. It has the following principal graph for $k=3, 4, 5, \cdots$

$$\graa{pg1};
\quad
\graa{pg2};
\quad
\graa{pg3};
\quad
\cdots$$

If $\delta\geq 2$, then $[i]>0$ for all $i$. Thus the matrix units of $\mathscr{P}_{\bullet}$ are still well-defined. Moreover, $\mathscr{P}_{\bullet}$ is a subfactor planar para algebra with the following principal graph
$$\graa{pginf}.$$
\end{proof}

\section*{Acknowledgement}
\thanks
This research was supported in part by a grant from the Templeton Religion Trust.  We are also grateful for hospitality at the FIM of the ETH-Zurich, where part of this work was carried out.

\renewcommand{\theequation}{A.\arabic{equation}}
\setcounter{equation}{0}

\renewcommand{\theequation}{B.\arabic{equation}}
\setcounter{equation}{0}
\section*{Appendix A. The Construction of Planar Para Algebras for Parafermions}
Since the generators are 1-boxes, any labelled 0-tangle is a disjoint union of closed strings labelled by generators. For an innermost closed string, we can move all its labels toward on point by isotopy. Then we can reduce the labelled closed string to a scalar by the relations given by the multiplication and the trace. Note that only the zero graded part on the closed string is evaluated as a non-zero scalar, since the trace is graded.
The para isotopy and the $2\pi$ rotation reduce to the usual isotopy of planar algebras on zero-graded part. Thus the evaluation of different labeled closed strings are independent modulo para isotopy. Essentially we only need the consistency condition on a single labelled closed string which indicates the associativity of the multiplication and the tracial condition of the expectation.

The above argument can be formalized by the method in Section 5 in \cite{LiuYB} which was motivated by the work of Kauffman \cite{Kau90}.
The idea is first constructing the planar algebras generated by the generators without relations, namely the {\it universal planar algebra}. Then one can define a partition function on the universal planar para algebra as the average of complexity reducing evaluations and prove that the relations are in the kernel of the partition function.

\begin{proof}[Proof of Theorem \ref{Theorem: construction}]
For the group $\mathbb{Z}_N$ and a bicharacter $\chi(i,j)=q^{ij}$, $q=e^{\frac{2\pi i}{N}}$,
first let us construct a shaded $(\mathbb{Z}_N,\chi)$ planar para algebra generated by the 1-box $c$ with grading 1 and relations $c^N=1$,
$\graa{circk}=0$, for $1\leq k\leq N-1$.
The para isotopy and the $2\pi$ rotation for the generator $c$ can also be viewed as relations of $c$.

Let $\mathscr{U}$ be the $(\mathbb{Z}_N,\chi)$ universal planar para algebra generated by $c$.
Let us define the partition function $Z$ inductively by the number of labelled circles of labelled 0-tangles as follows.

The partition function of the empty diagram is 1.
We assume that the partition function for diagrams with at most $n-1$ labelled circles is defined. Let us define the partition function of a labelled 0-tangle $T$ with $n$ labelled circles.
Let $IC$ be the set of innermost (labelled) circles of $T$. Take one circle $L$ in $IC$, let us define $Z(T,L)$.

If $L$ has no label $c$, then $Z(T,L):=\delta Z(T\setminus L)$.
If the number of labels of $L$ is not divisible by $N$, then $Z(T,L):=0$.
If $L$ has $Nk$ labels, we count the labels in $L$ anti-clockwise starting from the top label $c$, denoted by $c_i$, $0\leq i\leq Nk-1$. Let us move $c_i$ clockwise to $c_0$ one by one by RT isotopy and para isotopy. While applying the para isotopy to $c_i$ and another label, we obtain a scalar $q$ or $q^{-1}$ each time. While moving $c_i$ to $c_0$, if $c_i$ is rotated clockwise by $2k_i\pi$, then we obtain a scalar $q^{k_i}$.
Let $q^L$ be the multiplication of all these scalars. Then $Z(T,L):=q^L \delta Z(T\setminus L)$.

Let us define
$$Z(T)=\frac{1}{|IC|}\sum_{L\in IC} Z(T,L).$$
By an inductive argument and the fact that $q^N=1$, it is easy to check that $Z(T)$ is well-defined on the universal planar para algebra. The most complex case is to show the $Z(T,L)$ is well-defined while applying the para isotopy to $c_0$ and $c_1$. Under this isotopy, the top label becomes $c_1$. In this case, we need to move $c_0$ clockwise along $L$. We obtain $Nk-1$ scalars $q$ from the para isotopy, and one scalar $q$ from the $2\pi$ rotation of $c_0$. Their multiplication is 1. So $Z(T,L)$ does not change.

Moreover, it is easy to check that all the relations are in the kernal of the partition function $Z$. Therefore the relations are consistent. The identity is the only 0 graded 1-box, so $\mathscr{P}/\mathscr{I}$ is a spherical planar para algebra.

Take $\zeta$ to be a square root of $q$ such that $\zeta^{N^2}=1$.
Note that $\zeta \mathfrak{F}_{\text{s}}(c)$ satisfies the  relations as $c$.
Therefore, we can lift the shading of $\mathscr{P}/\mathscr{I}$ and by introducing the relation $\mathfrak{F}_{\text{s}}(c)=\zeta c$.
Then $\mathscr{P}/\mathscr{I}$ is an unshaded planar algebra.
\end{proof}

  \bibliography{bibliography}
  \bibliographystyle{amsalpha}

\end{document}